\newtheorem{remark}{Remark}[section]
\definecolor{darkred}{rgb}{0.85,0,0}
\definecolor{green}{rgb}{0,0.7,0}
\def\d{\textrm{d}}
\def\R{\mathbb{R}}
\def\leq{\leqslant}
\def\l{\left}
\def\r{\right}
\def\d{\mathrm{d}}
\def\epsilon{\varepsilon}
\def\II{(\Omega)}
\def\fy{\varphi}
\begin{document}

\title{Arbitrarily High-order Maximum Bound Preserving Schemes with Cut-off Postprocessing for Allen-Cahn Equations
\thanks{The work of J.\ Yang is supported
 by National Natural Science Foundation of China (NSFC) Grant No. 11871264, Natural Science Foundation of Guangdong Province (2018A0303130123),
 and NSFC/Hong Kong RRC Joint Research Scheme (NFSC/RGC 11961160718), and
the research of Z.\ Yuan and Z.\ Zhou is partially supported by Hong Kong RGC grant (No. 15304420).}}

\author{Jiang Yang\thanks{Department of Mathematics \& SUSTech International Center for Mathematics, 
Southern University of Science and Technology, Shenzhen 518055, China.
(\texttt{yangj7sustech.edu.cn})}
\and Zhaoming Yuan\thanks{Department of Applied Mathematics, The Hong Kong Polytechnic University, Kowloon, Hong Kong.
(\texttt{zhaoming.yuan@connect.polyu.hk})}
\and Zhi Zhou\thanks{Department of Applied Mathematics, The Hong Kong Polytechnic University, Kowloon, Hong Kong.
(\texttt{zhizhou@polyu.edu.hk, zhizhou0125@gmail.com})}}
\date{\today}

 \maketitle
 
\begin{abstract}
We develop and analyze a class of maximum bound preserving schemes for approximately solving Allen--Cahn equations.
We apply a $k$th-order single-step scheme in time (where the nonlinear term is linearized by multi-step extrapolation),
and  a lumped mass finite element method in space with piecewise
$r$th-order polynomials and Gauss--Lobatto quadrature. At each time level, a cut-off post-processing
is proposed to eliminate extra values violating the maximum bound principle at the finite element nodal points.
As a result, the numerical solution satisfies the maximum bound principle (at all nodal points),
and the optimal error bound $O(\tau^k+h^{r+1})$ is theoretically proved for a certain class of schemes.
These time stepping schemes under consideration includes algebraically stable collocation-type methods,
which could be arbitrarily high-order in both space and time.
Moreover, combining the cut-off strategy with the scalar auxiliary value (SAV) technique,
we develop a class of energy-stable and maximum bound preserving schemes, which is arbitrarily high-order in time.
Numerical results are provided to illustrate the accuracy of the proposed method.\\

{\bf Keywords:} Allen-Cahn equation, single step methods, lumped mass FEM, cut off,
high-order, maximum bound preserving, energy-stable.\\

{\bf AMS subject classifications 2010:}
65M30, 65M15, 65M12
\end{abstract}

\section{Introduction}\label{Se:intro}
The aim of this paper is to design and analyze a high-order maximum bound preserving (MBP) scheme for solving the Allen--Cahn equation:
\begin{equation}\label{eqn:AC}
    \begin{cases}
    u_t = \Delta u + f(u) &
        \mbox{in~~} \Omega\times(0,T),\\
    u(x,t=0) = u_0(x)   &
        \mbox{in~~} \Omega\times\{0\},\\
    \partial_{\mathbf{n}}u = 0  &
        \mbox{on~~} \partial\Omega\times(0,T)\\
  \end{cases}
\end{equation}
where $\Omega$ is a smooth domain in $\mathbb{R}^d$ with the boundary $\partial\Omega$.
$f(u)=-F'(u)$ with a double-well potential $F$ that has two wells at $\pm\alpha$, for {some known parameter $\alpha>0$}. 
For two popular choices of potentials. It is well-known that the Allen--Cahn equation \eqref{eqn:AC} has the maximum bound principle \cite{DuJuLiQiao:2020}:
\begin{align}\label{eqn:max-AC}
|u_0(x)|\le \alpha\quad \Longrightarrow\quad |u(x,t)| \le \alpha\qquad \text{for all} ~~(x,t)\in \Omega\times(0,T].
\end{align}
As a typical $L^2$ gradient flow associating with the following free energy
$$E(u)=\int_\Omega\frac{1}{2}|\nabla u|+F(u)\d x,$$
 the nonlinear energy dissipation law holds in the sense
\begin{equation}\label{enlaw}\frac{\d}{\d t}E(u)=-\int_\Omega|u_t|^2\d x\le 0.\end{equation}

The Allen--Cahn equation was originally introduced by Allen and Cahn in \cite{AC} to describe the motion of anti-phase boundaries in crystalline solids. In the context, $u$ represents the concentration of one of the two metallic components of the alloy and
the parameter $\epsilon$ involved in the nonlinear term  represents the interfacial width, which is small compared to the
characteristic length of the laboratory scale. Recent decades, the Allen--Cahn equation has become one of basic phase-field equations, which has been widely applied to many complicated moving interface problems in materials science and fluid dynamics through a phase-field approach coupled with other models \cite{anderson1998diffuse,chen2002phase,yue2004diffuse}.

\subsection{Review on existing studies}
The development and analysis of MBP method have been intensively studied in existing references.
It was proved in \cite{tangyang1,ShenTangYang:2016} that the stabilized semi-implicit Euler time-stepping scheme,
with central difference method in space, preserves the maximum principle unconditionally
if the stabilizer satisfies certain restrictions. In \cite{DuJuLiQiao:2019},
a stabilized exponential time differencing scheme was proposed for solving the (nonlocal) Allen--Cahn equation,
and the scheme was proved to be unconditionally MBP. See also \cite{DuJuLiQiao:2020}
for the generalization to a class of semilinear parabolic equations.
The second-order backward differentiation formula (with nonuniform meshes) was applied to develop an MBP scheme
in \cite{LiaoTangZhou:2020} under the usual CFL condition $\tau=O(h^2)$.

High-order strong stability preserving (SSP) time-stepping methods are widely used
in the development of MBP scheme for both parabolic equations and hyperbolic equations (see e.g.,  
\cite{gottlieb1998total,Liu-Yu-2014,GottliebShuTadmor:2001,gottlieb2011strong,Liu-Osher-1996,Qiu-Shu-2005,Xu-2014,Zhang-Shu-2010}).
Recently, an SSP integrating factor Runge--Kutta method of up to order four was proposed and
analyzed in \cite{IsherwoodGrantGottlieb:2018} for semilinear hyperbolic and parabolic equations.
For semilinear hyperbolic and parabolic equations with strong stability (possibly in the maximum norm), the method
can preserve this property and can avoid the standard parabolic CFL condition $\tau=O(h^2)$, only requiring
the stepsize $\tau$ to be smaller than some constant depending on the nonlinear source term, also referring to \cite{ju2020maximum}.
A nonlinear constraint limiter was introduced in \cite{Vegt-Xia-Xu-2019} for implicit time-stepping
schemes without requiring CFL conditions, which can preserve maximum principle at the
discrete level with arbitrarily high-order methods by solving a nonlinearly implicit system.

Very recently, a new class of high-order MBP methods was proposed in \cite{LiYangZhou:sisc}.
The method consists of a $k$th-order multistep exponential integrator
in time, and a lumped mass finite element method in space with piecewise $r$th-order
polynomials. At every time level, the extra values exceeding the maximum bound
are eliminated at the finite element nodal points by a cut-off operation.
Then the numerical solution at all nodal points
satisfies the MBP, and an error bound of $O(\tau^k + h^r)$ was proved. However, numerical results in \cite[Table 4.1]{LiYangZhou:sisc}
indicates that the error bound is not sharp in space, and how to improve the estimate it is still open.
Besides, the aforementioned scheme requires to evaluate some actions of exponential functions of diffusion operators,
which might be relatively expensive compared with solving poisson problems, and the generalization to other time stepping schemes is a nontrivial task.
Finally, the proposed scheme (with relatively coarse step sizes) might produce a numerical solution with obviously
increasing and oscillating energy. These motivate our current project.

\subsection{Our contributions and the organization of the paper}
The first contribution of the current paper is to develop and analyze
a class of MBP schemes, which could be arbitrarily high-order in both space and time,
for approximately solving the Allen--Cahn equation \eqref{eqn:AC}.
In time, we apply a single-step method, which is (strictly) accurate of order $k$,
and apply multistep extrapolation to linearize the nonlinear term.
In space, we apply the lumped mass FEM with piecewise $r$th-order
polynomials and Gauss--Lobatto quadrature, as in \cite{LiYangZhou:sisc}.
At each time level, we apply a cut-off operation to remove the extra value exceeding the maximum bound
at the nodal points. We estabilish the error estimate of order $O(\tau^k + h^{r+1})$,
which fills the gap between the numerical results in \cite[Theorem 3.2]{LiYangZhou:sisc} showing optimal convergence rate $O(h^{r+1})$
and the theoretical result in \cite[Table 4.1]{LiYangZhou:sisc} providing only a suboptimal error estimate of order $O(h^{r})$.
The improvement follows from a careful examination of quadrature errors
(see Remark \ref{rem:optimal-LiYangZhou} and \cite[eq. (2.6) and (3.22)]{LiYangZhou:sisc}).
To the best of our knowledge, this is the first work deriving optimal error estimates
of  arbitrarily high-order MBP schemes for the Allen--Cahn equation  \eqref{eqn:AC}.

Nevertheless, the optimal estimate of the fully discrete scheme (with the cut-off postprocessing) requires the L-stability of the time stepping scheme,
which excludes some popular and practical singe step method, e.g. Gauss--Legendre method belonging to algebraically stable collocation Runge--Kutta method.
Therefore, we revisit this class of time stepping methods and prove the same error estimate
by using the energy argument without using the L-stability. This is the second contribution of the paper.

In case of relative coarse step sizes, the proposed time stepping scheme (with the cut-off operation at each time level)
might result in oscillating and increasing energy (see e.g. Figure \ref{fig:energy} (middle)),
which violates the energy dissipation law \eqref{enlaw} of Allen--Cahn equation  \eqref{eqn:AC}. This motivates us to
develop a class of energy-stable and MBP schemes,
by combining the cut-off strategy with the scalar auxiliary value (SAV) method \cite{shen2019new}.
The scheme is second order in space but could be arbitrarily high-order in time. As far as we know, this is the first scheme that is unconditionally
 energy-dissipative, maximum bound preserving, and
arbitrarily high-order in time scheme with a provable error bound.
In fact, our numerical results show that the use of SAV regularizes the numerical solution, stabilizes the energy,
and significantly reduces the cut-off values at each time level (see e.g. Figure \ref{fig:energy}).

The rest of the paper is organized as follows. In section \ref{sec:single}, we consider the single step methods (in a general framework)
with cut-off postprocessing and multistep extrapolation. Error estimate for both semidiscrete and fully discrete scheme are provided,
where the optimal error estimate of the fully discrete scheme requires the L-stability of the time stepping scheme.
In section \ref{sec:collocation}, we analyze the algebraically stable collocation scheme and show the same error estimate without using the L-stability.
In section \ref{sec:sav}, combining the cut-off strategy with the scalar auxiliary value (SAV) method,
we develop a class of energy-stable and maximum bound preserving schemes, which is arbitrarily high-order in time.
In section \ref{sec:numerics}, we present numerical results to illustrate the
accuracy and effectiveness of the method in solving the Allen--Cahn equation.
Throughout, the notation $C$, with or without subscripts, denotes a generic constant, which may differ at different occurrences,
but it is always independent of the mesh size $h$ and the time step size $\tau$.

\section{Cut-off single-step methods with multi-step extrapolation}\label{sec:single}

In this section, we shall develop and analyze a class of MBP scheme for the Allen--Cahn equation \eqref{eqn:AC}.
Optimal error estimate will also be provided,  which fill the gap in the preceding work \cite{LiYangZhou:sisc}.
Besides, the argument presented in this section also builds the foundation of developing MBP
scheme which also satisfies energy dissipation property (in section \ref{sec:sav}).

\subsection{Temporal semi-discrete scheme}
To begin with, we consider the time discretization for the Allen--Cahn equation \eqref{eqn:AC}.
To this end, we split the interval $(0,T)$ into $N$ subintervals with the uniform mesh size $\tau=T/N$,
and set $t_n = n\tau$, $n=0,1,\ldots,N$.
On the time interval $[t_{n-1},t_n]$, we approximate the nonlinear term $f(u(s))$ by the extrapolation polynomial
$$
 \sum_{j=1}^{k} L_j(s) f(u^{n-j}),\quad \text{with known}~ u^{n-k},\ldots,u^{n-1}.
$$
where $L_j(s)$ is the Lagrange basis polynomials of degree $k-1$ in time, satisfying
$$
L_j(t_{n-i}) = \delta_{ij}, \quad i,j=1,\dots,k.
$$
Thus, on $[t_{n-1},t_n]$, the linearization of \eqref{eqn:AC} states as
$$\tilde{u}_t=\Delta\tilde{u}+\sum_{j=1}^{k} L_j(s) f(u^{n-j}).$$
Following Duhamel's principle yields
$$\tilde{u}(t_{n})=e^{\tau\Delta}u(t_{n-1})+\int_0^\tau e^{(\tau-s)\Delta}\sum_{j=1}^{k} L_j(t_{n-1}+s) f(u^{n-j})\d s.$$
Then a framework of a single step scheme of approximating $\tilde{u}(t_{n})$ reads:
\begin{equation}\label{eqn:semi}
\tilde u^{n} = \sigma(-\tau\Delta) u^{n-1} + \tau \sum_{i=1}^m p_i(-\tau \Delta)
\Big(\sum_{j=1}^{k} L_j (t_{ni}) f(u^{n-j})\Big),\quad \text{for all}~n\ge k,
\end{equation}
with $t_{ni}=t_{n-1} + c_i\tau$.
Here,
$\sigma(\lambda)$ and $\{p_i(\lambda)\}^m_{i=1}$ are rational functions and $c_i$ are distinct real numbers in $[0,1]$.
For simplicity, we assume that the scheme \eqref{eqn:semi} satisfies the following assumptions.

\begin{itemize}
\item[{\bf (P1)}] $|\sigma(\lambda)|< 1$ and $|p_i(\lambda)|\le c$, for all $i=1,\ldots,m$,
uniformly in $\tau$ and $\lambda > 0$. Besides, the numerator of $p_i(\lambda)$
is of lower degree than its denominator.
\item[{\bf (P2)}] The time stepping scheme \eqref{eqn:semi}  is accurate of order $k$ in sense that
$$ \sigma(\lambda) = e^{-\lambda} + O(\lambda^{k+1}),\quad \text{as}~\lambda\rightarrow0. $$
and, for $0\le j\le k$
$$ \sum_{i=1}^m c_i^j p_i(\lambda) - \frac{j!}{(-\lambda)^{j+1}}\Big(e^{-\lambda}
- \sum_{\ell=0}^j \frac{(-\lambda)^\ell}{\ell!}\Big) = O(\lambda^{k-j}),\quad \text{as}~\lambda\rightarrow0. $$
\item[{\bf (P3)}] The time discretization scheme \eqref{eqn:semi} is strictly accurate of order $q$ in sense that
$$ \sum_{i=1}^m c_i^j p_i(\lambda) - \frac{j!}{(-\lambda)^{j+1}}\Big(\sigma(\lambda)
- \sum_{\ell=0}^j \frac{(-\lambda)^\ell}{\ell!}\Big) =0,\quad\text{for all}~ 0\le j\le q-1. $$
\end{itemize}

\begin{remark}
In practice,
it is convenient to choose $p_i(\lambda)$s that share the same denominator of $\sigma(\lambda)$, for instance:
$$\sigma(\lambda) = \frac{a_0(\lambda)}{g(\lambda)},\quad \text{and} \quad p_i(\lambda) = \frac{a_i(\lambda)}{g(\lambda)},\quad \text{for}~i=1,2,\ldots,m,$$
where $a_i(\lambda)$ and $g(\lambda)$ are polynomials.
Then the time stepping scheme \eqref{eqn:semi} could be written as
\begin{equation*}
g(-\tau\Delta)\tilde u^{n} = a_0(-\tau\Delta) u^{n-1} + \tau \sum_{i=1}^m a_i(-\tau \Delta)
\Big(\sum_{j=1}^{k} L_j (t_{ni}) f(u^{n-j})\Big),\quad \text{for all}~n\ge k.
\end{equation*}
See e.g. \cite[pp. 131]{Thomee:2006} for the construction of such rational functions satisfying the Assumptions (P1)-(P3).
\end{remark}

Unfortunately, the time stepping scheme \eqref{eqn:semi} does not satisfy the maximum bound principle. Therefore,
at each time step, we apply the cut-off operation: for $n\ge k$, we find $u^n$ such that
\begin{align}
\hat u^{n} &= \sigma(-\tau\Delta) u^{n-1} + \tau \sum_{i=1}^m p_i(-\tau \Delta)
\Big(\sum_{j=1}^{k} L_j (t_{ni}) f(u^{n-j})\Big),\label{eqn:semi-1}\\
u^{n} &=  \min(\max(\hat u^{n},-\alpha),\alpha),\label{eqn:semi-cut}
\end{align}
where $\alpha$ is the maximum bound given in \eqref{eqn:max-AC}.
The accuracy of this cut-off semi-discrete method is guaranteed by the next theorem.

\begin{theorem}\label{thm:semi}
Suppose that the Assumptions (P1) and (P2) are fulfilled, and (P3) holds for $q=k$.
Let $u(t)$ be the solution to the Allen--Cahn equation, and $u^n$ be the solution to the time stepping scheme
\eqref{eqn:semi-1}-\eqref{eqn:semi-cut}.
Assume that $|u_0|\le \alpha$ and the maximum principle \eqref{eqn:max-AC} holds,
and assume that the starting values $u^j$, $j=0,\dots,k-1$, are given and
$$
|u^j|\le \alpha, \quad \text{for all}~~j=0,\dots,k-1.
$$
Then the semi-discrete solution given by  \eqref{eqn:semi-1}-\eqref{eqn:semi-cut} satisfies for all $n\ge k$
\begin{align*}
|u^n|\le \alpha,
\end{align*}
and
\begin{equation*}
\|u^n - u(t_n)\|\leq C\tau^k + C\sum_{j=0}^{k-1}\|u^j - u(t_j)\|,
\end{equation*}
provided that  $f$ is locally Lipschitz continuous, $\Delta u\in C^{k}([0,T]; L^2\II)$, $ u\in C^{k+1}([0,T];L^2\II)$ and $f(u) \in C^k([0,T];L^2\II)$.
\end{theorem}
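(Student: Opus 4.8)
The plan is to prove the maximum-bound property by a straightforward induction, and then to establish the error bound by comparing $u^n$ with the exact solution through a perturbed variant of the scheme, exploiting that the cut-off is a nonexpansive operation in $L^\infty$ (hence, after lumping/pointwise interpretation, also controllable in $L^2$).

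First I would verify the bound $|u^n|\le\alpha$. This is immediate: by hypothesis it holds for $j=0,\dots,k-1$, and if $|u^{n-j}|\le\alpha$ for $j=1,\dots,k$, then $\hat u^n$ is whatever it is, and $u^n=\min(\max(\hat u^n,-\alpha),\alpha)$ satisfies $|u^n|\le\alpha$ by construction. So the cut-off trivially enforces the MBP at every step with no smallness condition on $\tau$; no structural property of $\sigma,p_i$ is needed here.

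Next, for the error estimate, I would introduce the \emph{reference} quantities obtained by feeding the exact solution values $u(t_{n-j})$ into the one-step map: define $w^n = \sigma(-\tau\Delta)u(t_{n-1}) + \tau\sum_i p_i(-\tau\Delta)\big(\sum_j L_j(t_{ni})f(u(t_{n-j}))\big)$. By Assumptions (P1)--(P2) together with (P3) at $q=k$ — exactly the classical consistency/stability setup for rational single-step approximations of $e^{\tau\Delta}$ (as in Thomée, referenced in the paper) — the local truncation error satisfies $\|w^n - u(t_n)\| \le C\tau^{k+1}$, using the stated regularity $\Delta u\in C^k$, $u\in C^{k+1}$, $f(u)\in C^k$ in $L^2$; the $c_i^j$-moment conditions are precisely what kills the lower-order terms in the Taylor/Duhamel expansion, and the extrapolation polynomial $\sum_j L_j f(u^{n-j})$ reproduces $f(u(s))$ up to $O(\tau^k)$ on the interval, contributing another $O(\tau^{k+1})$ after the $\tau$-prefactored integral. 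Then I would write $e^n := u^n - u(t_n)$ and split
\begin{equation*}
\|e^n\| \le \|u^n - w^n\| + \|w^n - u(t_n)\| \le \|u^n - w^n\| + C\tau^{k+1}.
\end{equation*}
Since $u(t_n)$ lies in $[-\alpha,\alpha]$ pointwise, the cut-off operator $\chi:v\mapsto\min(\max(v,-\alpha),\alpha)$ fixes $u(t_n)$, and because $\chi$ is $1$-Lipschitz in $L^2$ (it is a pointwise $1$-Lipschitz scalar contraction, hence nonexpansive on $L^2$), we get $\|u^n - u(t_n)\| = \|\chi(\hat u^n) - \chi(u(t_n))\| \le \|\hat u^n - u(t_n)\|$. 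Comparing $\hat u^n$ with $w^n$ uses (P1): $\|\sigma(-\tau\Delta)(u^{n-1}-u(t_{n-1}))\|\le \|e^{n-1}\|$ (strictly, $|\sigma(\lambda)|<1$, or $\le 1$ suffices here), and the $p_i$ terms give $\tau\sum_i\|p_i(-\tau\Delta)\|\,\sum_j|L_j(t_{ni})|\,\|f(u^{n-j})-f(u(t_{n-j}))\|$, which by the uniform bound $|p_i|\le c$, boundedness of $|L_j(t_{ni})|$ on $[0,1]$, the \emph{local} Lipschitz continuity of $f$ (applicable because all arguments lie in $[-\alpha,\alpha]$), is bounded by $C\tau\sum_{j=1}^k\|e^{n-j}\|$. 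Collecting, $\|e^n\|\le \|e^{n-1}\| + C\tau\sum_{j=1}^k\|e^{n-j}\| + C\tau^{k+1}$, and a discrete Gronwall argument over the finitely many steps $n\le T/\tau$ yields $\|e^n\|\le C\tau^k + C\sum_{j=0}^{k-1}\|e^j\|$.

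The main obstacle is the local truncation error estimate $\|w^n-u(t_n)\|\le C\tau^{k+1}$, i.e.\ showing that Assumptions (P1)--(P3) genuinely deliver $k$th-order consistency for the Duhamel formula with the extrapolated nonlinearity — one must expand $e^{\tau\Delta}u(t_{n-1})$ and $\int_0^\tau e^{(\tau-s)\Delta}f(u(t_{n-1}+s))\,\d s$ in powers of $-\tau\Delta$ (legitimate on the range of $\Delta u\in C^k$), match against $\sigma$ and $\sum_i c_i^j p_i$ via the moment conditions, and separately control the extrapolation error $\|f(u(s)) - \sum_j L_j(s)f(u(t_{n-j}))\| \le C\tau^k\|f(u)\|_{C^k([0,T];L^2)}$. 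Everything else — the MBP induction, the $L^2$-nonexpansiveness of $\chi$, the stability recursion, the discrete Gronwall — is routine. One should also note the argument never invokes any L-stability of the scheme; L-stability enters only in the fully discrete analysis, to control the quadrature (lumped mass) perturbation, which is outside the scope of this semidiscrete theorem.
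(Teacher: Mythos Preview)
Your proposal is correct and follows essentially the same approach as the paper: the paper writes $\hat e^n = \sigma(-\tau\Delta)e^{n-1} + \varphi^n$ and splits $\varphi^n$ into an extrapolation/Lipschitz part $I$ and a local truncation part $II$, which is just a relabeling of your decomposition $\hat u^n - u(t_n) = (\hat u^n - w^n) + (w^n - u(t_n))$. The only place where the paper is more explicit is the truncation estimate $\|w^n - u(t_n)\|\le C\tau^{k+1}$: rather than expanding the semigroup, it Taylor-expands $u$ and $\partial_t u - \Delta u$ at $t_{n-1}$, uses (P3) with $q=k$ to annihilate all terms of order $\le k-1$, and then bounds the residual $k$th-order term via the auxiliary function $\gamma(\lambda)=k!(-\lambda)^{-(k+1)}\big(\sigma(\lambda)-\sum_{\ell\le k}(-\lambda)^\ell/\ell!\big)$, whose uniform boundedness on $[0,\infty)$ follows from (P1)--(P2).
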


\begin{proof}
Due to the cut-off operation \eqref{eqn:semi-cut}, the discrete
maximum bound principle follows immediately.
Then it suffices to show the error estimate.

Let $e ^n=u^n-u(t_n)$ and $\hat e ^n=\hat u^n-u(t_n)$. Since the exact solution satisfies the maximum bound \eqref{eqn:max-AC}, we have
$$  \|  e ^n  \|_{L^2\II} \le  \|  \hat e ^n  \|_{L^2\II}. $$
Then it is easy to note that
$$  \hat e^{n} = \sigma (-\tau\Delta) e ^{n-1} + \fy^n,\quad n \ge k. $$
where $\fy^n$ can be written as
\begin{align*}
  \fy^n &= -u(t_{n}) + \sigma(-\tau\Delta) u(t_{n-1}) + \tau \sum_{i=1}^m p_i(-\tau \Delta)
  \Big(\sum_{j=1}^{k} L_j (t_{ni}) f(u^{n-j})\Big) \\
  &= \tau \sum_{i=1}^m p_i(-\tau \Delta)\Big(  \sum_{j=1}^{k} L_j (t_{ni}) f(u^{n-j}) - f(t_{ni}))\Big)\\
  &\quad + \Big(-u(t_{n}) + \sigma(-\tau\Delta) u(t_{n-1}) + \tau \sum_{i=1}^m p_i(-\tau \Delta) (\partial_t u - \Delta u) (t_{ni})\Big)\\
  &=: I + II.
\end{align*}
Then the bound of $I$ follows  from the approximation property of Lagrange interpolation, the maximum bound of $u^{n-j}$ and $u(t_{n-j})$, $j=1,\ldots,k$,
the locally Lipschitz continuity of $f$, and the Assumption (P1):
\begin{align*}
   \|  I  \|_{L^2\II} &\le  \tau \sum_{i=1}^m \| p_i(-\tau \Delta)\|_{L^2\II\rightarrow L^2\II} \Big\|\sum_{j=1}^k L_j(t_{ni}) f(u(t_{n-j}))- f(u(t_{n-1} + c_i\tau))\Big\|_{L^2\II}\\
   &\qquad + \tau \sum_{i=1}^m \| p_i(-\tau \Delta)\|_{L^2\II\rightarrow L^2\II} \sum_{j=1}^k |L_j(t_{ni})|\,\| f(u^{n-j})- f(u(t_{n-j} ))\|_{L^2\II}\\
   &\le C\tau^{k+1} \|  f(u) \|_{C^{k}([t_{n-k},t_n];L^2\II)} + C \tau \sum_{j=1}^k \|  e^{n-j}  \|_{L^2\II}.
\end{align*}
Now we term to the second term $II$, which can be rewritten by Taylor's expansion at $t_{n-1}$
\begin{align*}
 II &= -\sum_{j=0}^k\frac{\tau^j}{j!} u^{(j)} (t_{n-1}) + \sigma(-\tau \Delta)u(t_{n-1})\\
  &\quad + \tau \sum_{i=1}^m p_i(-\tau \Delta) \sum_{j=0}^{k-1}
  \frac{(c_i\tau)^j}{j!}
  (u^{(j+1)} - \Delta u^{(j)})
  (t_{n-1}) + R_1 + R_2.
\end{align*}
where the remainders $R_1$ and $R_2$ are
\begin{align*}
 R_1 &= \int_{t_{n-1}}^{t_n} \frac{(t_n-s)^k}{k!} u^{(k+1)}(s) \,\d s\quad \text{and}\quad \\
  R_2 &=   \tau \sum_{i=1}^m p_i(-\tau \Delta) \int_{t_{n-1}}^{t_{n-1}+c_i\tau} \frac{(t_{n-1}+c_i\tau-s)^{k-1}}{(k-1)!} (u^{(k+1)}-\Delta u^{(k)})(s) \,\d s
\end{align*}
respectively. Hereafter, we use $u^{(j)}$ to denote the $j$th derivative in time. Then Assumption (P1) implies
\begin{align*}
 \|R_1+R_2\|_{L^2\II} &\le C \tau^{k+1} \Big(\|  u \|_{C^{k+1} ([t_{n-1},t_n];L^2\II)} + \|  \Delta u \|_{C^{k} ([t_{n-1},t_n];L^2\II)} \Big).
\end{align*}
Now we revisit the three leading terms of $II$. Note that
\begin{align*}
& -\sum_{j=0}^k\frac{\tau^j}{j!} u^{(j)} (t_{n-1}) + \sigma(-\tau \Delta)u(t_{n-1})
  + \tau \sum_{i=1}^m p_i(-\tau \Delta) \sum_{j=0}^{k-1} \frac{(c_i\tau)^j}{j!}
  (u^{(j+1)} - \Delta u^{(j)})(t_{n-1})\\
 =& \Big(-I +  \sigma(-\tau \Delta) - \tau \sum_{i=1}^m p_i(-\tau \Delta) \Delta \Big)u(t_{n-1}) \\
 & +   \sum_{j=1}^{k-1} \frac{\tau^j}{j!} \Big(- I  +  j \sum_{i=1}^m c_i^{j-1} p_i(-\tau \Delta) - \tau\sum_{i=1}^m c_i^{j} p_i(-\tau \Delta) \Delta\Big)u^{(j)}(t_{n-1})   \\
 & +  \frac{\tau^k}{k!} \Big(-I +  k\sum_{i=1}^m c_i^{k-1}p_i(-\tau\Delta)\Big )u^{(k)}(t_{n-1}) =\sum_{\ell=1}^3 II_\ell.
\end{align*}
Since the time stepping scheme is strictly accurate of order $q=k$ (by Assumption (P3)),
we have $ II_1=  II_2=0$. Meanwhile,
we apply Assumption (P3) again to arrive at for $\lambda>0$
\begin{align*}
  -1 + k \sum_{i=1}^m c_i^{k-1} p_i(\lambda) = \lambda \frac{k!}{(-\lambda)^{k+1}} \Big(\sigma(\lambda)-\sum_{\ell=0}^k\frac{(-\lambda)^\ell}{\ell!}\Big) =: \lambda \gamma(\lambda).
\end{align*}
Note that $|\gamma(\lambda)|=O(1)$ for $\lambda\rightarrow0$ (by Assumption (P2))
and $|\gamma(\lambda)|\rightarrow 0$ for  $\lambda\rightarrow +\infty$. Hence $|\gamma(\lambda)|$ is bounded
uniformly in $[0,\infty)$. Then we arrive at
\begin{align*}
\|  II_3 \|_{L^2\II} \le C \tau^{k+1} \|  \Delta u^{(k)}(t_{n-1}) \| \le  C \tau^{k+1} \|  \Delta u \|_{C^{k}([t_{n-1},t_n];L^2\II)}.
\end{align*}
In conclusion, we obtain the following estimate
\begin{align*}
\|  e ^n \|_{L^2\II} \le \|  \sigma(-\tau \Delta)e ^{n-1} \|_{L^2\II}  +  C \tau^{k+1}  + C \tau \sum_{j=1}^k \|  e^{n-j}  \|_{L^2\II}.
\end{align*}
Then the assumption (P1) leads to
\begin{align*}
\|  e ^n \|_{L^2\II} \le \|  e_h^{n-1} \|_{L^2\II}  +  C \tau^{k+1}  + C \tau \sum_{j=1}^k \|  e^{n-j}  \|_{L^2\II}.
\end{align*}
Finally, the desired assertion follows immediately by using discrete Gronwall's inequality
\begin{align*}
\|  e ^n \|_{L^2\II}  \le  C e^{cT} \tau^{k}  + C e^{cT}  \sum_{j=0}^{k-1} \|  e^{j}  \|_{L^2\II}.
\end{align*}
\end{proof}


\begin{remark}\label{remark:p3}
Theorem \ref{thm:semi} implies that the cut-off operation preserves the maximum bound
without losing global accuracy. However, the Assumption (P3) is restrictive. It is well-known that a single step method
with a given $m \in \mathbb{Z}^+$ could be accurate of order $2m$ (Gauss--Legendre method) \cite[Section 2.2]{Ehle:thesis},
but at most strictly accurate of order $m+1$ \cite[Lemma 5]{BrennerCrouzeixThomee:1982}.
In general, a collocation-type method is only strictly accurate of order $m+1$.

Without the assumption of strict accuracy,  one may still show the error estimate,
provided that $f(u)$ satisfies certain compatibility conditions, e.g.,
$$ f(u) \in C^{\ell}([0,T]; \text{Dom}(\Delta^{k-\ell}))\quad \text{for all} \quad \ell=1,2,\ldots,k,$$
that requires
 $\partial_{\bf n} \Delta^q f(u) = 0$ for $\ell=1,2,\ldots,k-1$.
Unfortunately, those compatibility conditions cannot be fulfilled in general for semilinear parabolic problems.
\end{remark}

\begin{remark}\label{remark:p3-2}
The same error estimate could be proved by
assuming that the scheme satisfies the assumption (P3)
with $q=k-1$ and some additional conditions
(see e.g. \cite[Theorem 8.4]{Thomee:2006} and \cite{OstermannRoche:1992}).
However, the proof is not directly applicable when we apply the cut-off operation at each time step.
It warrants further investigation to show the sharp convergence rate $O(\tau^k)$ with weaker assumptions.
\end{remark}

\subsection{Fully discrete scheme}\label{ssec:fully}
In this part, we discuss the fully discrete scheme. To illustrate the
main idea, we consider the one-dimensional case $\Omega=[a,b]$,
and the argument could be straightforwardly extended to multi-dimensional cases, see Remark \ref{rem:highdimension}.
We denote by $a=x_0<x_{1}<\dots<x_{Mr}=b$ a partition of the domain with a uniform mesh size $h=x_{ir} - x_{(i-1)r} = (b-a)/M$,
and denote by $S_h^r$ the finite element space of degree $r\ge 1$, i.e.,
$$
S_h^r=\{v\in H^1(\Omega): v|_{I_i}\in P_r,\,\,\, i=1,\dots,M\} ,
$$
where $I_i=[x_{(i-1)r},x_{ir}]$ and $P_r$ denotes the space of polynomials of degree $\le r$.

Let $x_{(i-1)r+j}$ and { $\omega _{j}$}, $j=0,\dots,r$, be the quadrature
points and weights of the $(r+1)$-point Gauss--Lobatto quadrature on the subinterval $I_i$, and denote
\begin{align*}
w_{(i-1)r+j} =
\left\{
\begin{aligned}
&\omega_j &&\mbox{for}\,\,\, 1\le j\le r-1,\\
&2\omega_j &&\mbox{for}\,\,\, j=0, r.
\end{aligned}
\right.
\end{align*}
Then  we consider the piecewise Gauss--Lobatto quadrature approximation of the inner product, i.e.,
$$
(f,g)_h
:=  \sum_{j=0}^{Mr} w_j f(x_j) g(x_j)  .
$$
This discrete inner product induces a norm
$$
\|f_h\|_h=\sqrt{(f_h,f_h)_h} \quad\forall\, f_h\in S_h^r.
$$

Then we have the following lemma for norm equivalence. The proof follows directly from the positivity of
{Gauss--Lobatto} quadrature weights \cite[p. 426]{Quarteroni:2000}.

\begin{lemma}\label{lem:norm-equivalence}
The discrete norm $\| \cdot \|_h$ is equivalent to usual $L^2$ norm $\| \cdot \|_{L^2\II}$ in sense that
\begin{align*}
 C_1 \|v_h\|_{L^2(\Omega)} \le \|v_h\|_h \le  C_2 \|v_h\|_{L^2(\Omega)} ,\quad \forall v_h\in S_h^r.
\end{align*}
where $C_1$ and $C_2$ are independent of $h$.
\end{lemma}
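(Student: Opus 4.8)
The plan is to reduce the global equivalence to an element-wise statement and then invoke a scaling (reference element) argument together with the strict positivity of the Gauss--Lobatto weights. First I would note that both norms decompose as sums over the subintervals $I_i$: for the discrete norm, each interior node $x_{(i-1)r+j}$ ($1\le j\le r-1$) contributes only to $I_i$, while the shared endpoints $x_{(i-1)r}$ receive the weight $2\omega_0=2\omega_r$, which splits as $\omega_r$ from $I_{i-1}$ and $\omega_0$ from $I_i$; hence $\|v_h\|_h^2 = \sum_{i=1}^M \sum_{j=0}^r \omega_j^{(i)} v_h(x_{(i-1)r+j})^2$, where $\omega_j^{(i)}$ are the genuine $(r+1)$-point Gauss--Lobatto weights on $I_i$. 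Likewise $\|v_h\|_{L^2(\Omega)}^2 = \sum_{i=1}^M \|v_h\|_{L^2(I_i)}^2$. So it suffices to prove $C_1^2 \|v_h\|_{L^2(I_i)}^2 \le \sum_{j=0}^r \omega_j^{(i)} v_h(x_{(i-1)r+j})^2 \le C_2^2 \|v_h\|_{L^2(I_i)}^2$ for each $i$, with constants independent of $i$ and $h$.

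Next I would map $I_i$ affinely to the reference interval $\hat I=[-1,1]$; the map carries $v_h|_{I_i}\in P_r$ to some $\hat v\in P_r$, multiplies $\|v_h\|_{L^2(I_i)}^2$ by the Jacobian $h/2$, and multiplies each weight $\omega_j^{(i)}$ by the same factor $h/2$ (Gauss--Lobatto weights scale linearly with interval length). Thus the factor $h/2$ cancels from both sides, and the claim becomes the $h$-independent, finite-dimensional statement
\begin{equation*}
C_1^2 \int_{-1}^1 \hat v(x)^2 \,\d x \;\le\; \sum_{j=0}^r \hat\omega_j\, \hat v(\hat x_j)^2 \;\le\; C_2^2 \int_{-1}^1 \hat v(x)^2\,\d x, \qquad \forall\, \hat v\in P_r,
\end{equation*}
where $\hat x_j$, $\hat\omega_j>0$ are the reference Gauss--Lobatto nodes and weights. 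On the finite-dimensional space $P_r$, the map $\hat v \mapsto \big(\sum_j \hat\omega_j \hat v(\hat x_j)^2\big)^{1/2}$ is a genuine norm: it is clearly a seminorm, and it vanishes only if $\hat v(\hat x_j)=0$ for all $j=0,\dots,r$, i.e. at $r+1$ distinct points, forcing $\hat v\equiv 0$ since $\deg \hat v\le r$ — here strict positivity of the $\hat\omega_j$ is exactly what is needed. Since any two norms on the finite-dimensional space $P_r$ are equivalent, the displayed double inequality holds with some constants $C_1,C_2$ depending only on $r$.

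The argument is essentially routine; the one point that requires care — and the only place the hypotheses really enter — is the correct bookkeeping of the endpoint weights (the doubling $w_{(i-1)r}=2\omega_0$) so that the discrete norm genuinely reassembles, element by element, into a sum of true single-interval Gauss--Lobatto quadratures, and the verification that these quadrature weights scale proportionally to the interval length under the affine map so that all $h$-dependence cancels. Once these two observations are in place, the equivalence of norms on $P_r$ finishes the proof.
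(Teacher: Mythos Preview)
Your argument is correct and is precisely the standard scaling/reference-element argument underlying the paper's one-line justification (the paper merely asserts that the result ``follows directly from the positivity of Gauss--Lobatto quadrature weights'' and cites \cite[p.~426]{Quarteroni:2000}); you have simply supplied the details the paper omits. One minor bookkeeping point: with the paper's weight convention the two global endpoints $x_0$ and $x_{Mr}$ also carry the doubled weight $2\omega_0$, so your element-wise reassembly is off by a factor of two at those two nodes---this does not affect the equivalence, only the constants.
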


To develop the fully discrete scheme, we introduce the discrete Laplacian $-\Delta_h: S_h^r\rightarrow S_h^r$ such that
\begin{align}\label{eqn:disc-lap}
(-\Delta_h v_h, w_h)_h = (\nabla v_h, \nabla w_h)\qquad \text{for all} ~~v_h,w_h\in S_h^r.
\end{align}
Then at $n$-th time level, with given $u_h^{n-k}, \ldots, u_h^{n-1} \in S_h^r$, we find an intermediate solution
$\hat u_h^n\in S_h^r$ such that
\begin{equation}\label{eqn:fully}
 \hat u_h^n = \sigma(-\tau \Delta_h) u_h^{n-1} +\tau  \sum_{i=1}^m p_i(-\tau \Delta_h)
 \Big(\sum_{j=1}^{k} L_j (t_{ni}) \Pi_h f(u_h^{n-j})\Big)
\end{equation}
where $t_{ni}=t_{n-1}+c_i\tau$, and $\Pi_h:C(\overline\Omega)\rightarrow S_h^r$ is the Lagrange interpolation operator.
In order to impose the maximum bound, we apply the cut-off postprocessing: find $u_h^n \in S_h^r$ such that
\begin{equation}\label{eqn:fully-cut}
\begin{split}
u_h^n(x_j)  = \min\big(\max \big(\hat u_h^n(x_j), - \alpha \big) , \alpha \big),  \quad j=0,\dots,Mr.
\end{split}
\end{equation}
It is equivalent to
\begin{equation*}
u_h^n  = \Pi_h \min\big(\max \big(\hat u_h^n, - \alpha \big) , \alpha \big) .
\end{equation*}
Essentially, the cut-off operation \eqref{eqn:fully-cut} only works on the finite element nodal points.

Next, we shall prove the optimal error estimate of the fully discrete scheme \eqref{eqn:fully}-\eqref{eqn:fully-cut}.
To this end, we need the following stability estimate of operators $\sigma(-\tau \Delta_h)$ and $p_i(-\tau \Delta_h) $.
\begin{lemma}\label{lem:stable-fully}
Let $\Delta_h$ be the discrete Laplacian defined in \eqref{eqn:disc-lap}, and $\sigma(\lambda)$ and $p_i(\lambda)$ are
rational functions satisfying the Assumption (P1). Then there holds that for all $v_h \in S_h^r$
\begin{equation}\label{eqn:stab-est-1}
 \|  \nabla^q \sigma(-\tau \Delta_h) v_h \|_h \le \| \nabla^q v_h \|_h
\quad \text{and}\quad \|  \nabla^q p_i(-\tau \Delta_h ) v_h \|_h \le C\| \nabla^q v_h \|_h
 \end{equation}
 with $i=1,\ldots,m$ and $q=0,1$.
 Meanwhile,
\begin{equation} \label{eqn:stab-est-2}
 \tau \|  \nabla^q \Delta_h p_i (-\tau \Delta_h) v_h \|_h   \le C \|  \nabla^q v_h \|_h
 \quad  i=1,\ldots,m, ~~q=0,1
  \end{equation}
\end{lemma}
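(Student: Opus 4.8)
The plan is to reduce every bound in \eqref{eqn:stab-est-1}--\eqref{eqn:stab-est-2} to a pointwise estimate of a rational function on the half-line $[0,\infty)$ via the spectral decomposition of $-\Delta_h$. From the definition \eqref{eqn:disc-lap} one has $(-\Delta_h v_h,w_h)_h=(\nabla v_h,\nabla w_h)=(-\Delta_h w_h,v_h)_h$ and $(-\Delta_h v_h,v_h)_h=\|\nabla v_h\|_{L^2(\Omega)}^2\ge 0$, so $-\Delta_h$ is self-adjoint and positive semidefinite on the finite-dimensional space $(S_h^r,(\cdot,\cdot)_h)$. Hence there is a $(\cdot,\cdot)_h$-orthonormal basis $\{\phi_\ell\}$ of $S_h^r$ with $-\Delta_h\phi_\ell=\mu_\ell\phi_\ell$, $0=\mu_1\le\mu_2\le\cdots$, and for $v_h=\sum_\ell a_\ell\phi_\ell$ ($a_\ell=(v_h,\phi_\ell)_h$) the functional calculus gives $g(-\tau\Delta_h)v_h=\sum_\ell g(\tau\mu_\ell)a_\ell\phi_\ell$ for any real rational symbol $g$ with no poles on $\{\tau\mu_\ell\}$. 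Consequently $\|g(-\tau\Delta_h)v_h\|_h^2=\sum_\ell |g(\tau\mu_\ell)|^2|a_\ell|^2\le\big(\sup_{\lambda\ge0}|g(\lambda)|\big)^2\|v_h\|_h^2$.

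For the gradient estimates ($q=1$) I would combine this with the identity $\|\nabla w_h\|_{L^2(\Omega)}^2=(-\Delta_h w_h,w_h)_h=\sum_\ell\mu_\ell|(w_h,\phi_\ell)_h|^2$, which is immediate from \eqref{eqn:disc-lap}. Since all functions of $-\Delta_h$ are simultaneously diagonalized by $\{\phi_\ell\}$, applying this to $w_h=g(-\tau\Delta_h)v_h$ yields $\|\nabla g(-\tau\Delta_h)v_h\|_{L^2(\Omega)}^2=\sum_\ell\mu_\ell|g(\tau\mu_\ell)|^2|a_\ell|^2\le\big(\sup_{\lambda\ge0}|g(\lambda)|\big)^2\|\nabla v_h\|_{L^2(\Omega)}^2$ (here $\|\nabla^1\cdot\|_h$ in the statement is read as the $L^2(\Omega)$-norm of the gradient). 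It then remains only to apply this with three choices of symbol: $g=\sigma$ gives the $\sigma$-parts of \eqref{eqn:stab-est-1}; $g=p_i$ gives the $p_i$-parts of \eqref{eqn:stab-est-1}; and, noting that $\tau\Delta_h p_i(-\tau\Delta_h)$ has eigenvalues $-\tau\mu_\ell\,p_i(\tau\mu_\ell)$, the choice $g(\lambda)=-\lambda p_i(\lambda)$ gives \eqref{eqn:stab-est-2} for both $q=0$ and $q=1$.

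Thus the whole lemma comes down to bounding $\sup_{\lambda\ge0}|g(\lambda)|$ for $g\in\{\sigma,\ p_i,\ \lambda\mapsto\lambda p_i(\lambda)\}$. By (P1), $|\sigma(\lambda)|<1$ on $(0,\infty)$; by (P2), $\sigma(0)=1$; and, being rational and bounded near $0$, $\sigma$ has no pole there, so $\sup_{\lambda\ge0}|\sigma(\lambda)|=1$, giving the sharp constant $1$. Likewise $|p_i(\lambda)|\le c$ on $(0,\infty)$ and boundedness near $0$ rules out a pole at $0$, so $\sup_{\lambda\ge0}|p_i(\lambda)|\le c$. Finally $\lambda\mapsto\lambda p_i(\lambda)$ is continuous on $[0,\infty)$, vanishes at $\lambda=0$, and — since the numerator of $p_i$ has \emph{strictly lower} degree than its denominator, again by (P1) — satisfies $\lambda p_i(\lambda)=O(1)$ as $\lambda\to\infty$; being continuous and bounded at both ends it is bounded on $[0,\infty)$. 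This last step, where the "lower-degree" hypothesis in (P1) is genuinely used, together with the (routine) verification that $\sigma$ and the $p_i$ are pole-free on $[0,\infty)$, is the only part requiring any care; everything else is bookkeeping with the orthonormal eigenbasis. Collecting the suprema yields \eqref{eqn:stab-est-1}--\eqref{eqn:stab-est-2} with $C=\max_i\max\big(c,\ \sup_{\lambda\ge0}|\lambda p_i(\lambda)|\big)$, independent of $h$ and $\tau$.
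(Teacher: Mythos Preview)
Your proposal is correct and follows essentially the same approach as the paper: diagonalize $-\Delta_h$ in the $(\cdot,\cdot)_h$ inner product, express all the norms as weighted $\ell^2$-sums over eigenmodes, and reduce each inequality to a pointwise bound on the relevant symbol over $[0,\infty)$. The only cosmetic difference is that for \eqref{eqn:stab-est-2} the paper first records the explicit decay $|p_i(\lambda)|\le C_1/(1+C_2\lambda)$ (which the strictly-lower-degree hypothesis in (P1) gives) and then bounds $\tau^2(\lambda_j^h)^{q+2}/(1+C\tau\lambda_j^h)^2\le C(\lambda_j^h)^q$, whereas you argue directly that $\lambda\mapsto\lambda p_i(\lambda)$ is a bounded rational function on $[0,\infty)$; these are the same observation in different clothing.
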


\begin{proof}
Let $\{(\lambda_j, \fy_j^h)\}_{j=1}^{Mr+1}$ be eigenpairs of $-\Delta_h$, where $\{\fy_j^h\}_{j=1}^{Mr+1}$
forms an orthogonal basis of $S_h^r$
in sense that $ (\fy_i^h,\fy_j^h)_h = \delta_{i,j} $.
Then by the Assumption (P1),
we have for any $v_h \in S_h^r$ and $q=0,1$
\begin{align*}
 \| \nabla^q \sigma(-\tau \Delta_h) v_h \|_h^2 &=  \sum_{j=1}^{Mr+1}(\lambda_j^h)^q|\sigma(\tau\lambda_j)|^2 |(v_h , \fy_j^h)_h|^2\\
& \le \sum_{j=1}^{Mr+1} (\lambda_j^h)^q  |(v_h , \fy_j^h)_h|^2 = \| \nabla^q  v_h \|_h^2.
 \end{align*}
This shows the first estimate. The estimate for $p_i$ follows analogously.

Moreover, the numerator of $p_i(\lambda)$
is of lower degree than its denominator (by Assumption (P1)),
and hence there exists  constants $C_1, C_2>0$ such that
$$  |p_i(\lambda)| \le \frac{C_1}{1+C_2\lambda},\quad \text{for all} ~\lambda>0. $$
Then we derive that  for any $v_h \in S_h^r$ and $q=0,1$
\begin{align*}
\tau^2 \|  \nabla^q \Delta_h p_i (-\tau \Delta_h) v_h \|_h^2
&= \tau^2\sum_{j=1}^{Mr+1}(\lambda_j^h)^{q+2} |p_i(\tau\lambda_j)|^2 |(v_h , \fy_j^h)_h|^2\\
& \le C\tau^2\sum_{j=1}^{Mr+1} \frac{(\lambda_j^h)^{q+2}}{(1+C\tau\lambda_j^h)^2}  |(v_h , \fy_j^h)_h|^2 \\
& \le C \sum_{j=1}^{Mr+1} (\lambda_j^h)^q  |(v_h , \fy_j^h)_h|^2 = C\| \nabla^q  v_h \|_h^2,
\end{align*}
where the constant $C$ only depends on $C_1$ and $C_2$.
This proves the assertion \eqref{eqn:stab-est-2}.
\end{proof}

\begin{lemma}\label{lem:truncation}
Let $v\in H^{2r+2}(\Omega)$ with the homogeneous Neumann boundary condition
and $\fy_h\in S_h^r$. Then we have the following estimate
$$ (\Pi_h \Delta v - \Delta_h \Pi_h v, \fy_h )_h \le Ch^{r+1} \| v \|_{H^{2r+2}} \|\fy_h \|_{H^1\II}. $$
\end{lemma}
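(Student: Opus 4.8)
The plan is to estimate the consistency error $(\Pi_h \Delta v - \Delta_h \Pi_h v, \fy_h)_h$ by writing it as the sum of two standard error contributions: the error introduced by replacing the exact $L^2$ inner product with the Gauss--Lobatto quadrature, and the error introduced by replacing $v$ with its interpolant $\Pi_h v$. Concretely, I would start from the definition \eqref{eqn:disc-lap} of the discrete Laplacian, which gives
\begin{align*}
(\Delta_h \Pi_h v, \fy_h)_h = -(\nabla \Pi_h v, \nabla \fy_h).
\end{align*}
On the other hand, integration by parts together with the homogeneous Neumann condition on $v$ yields $(\Delta v, \fy_h) = -(\nabla v, \nabla \fy_h)$. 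Therefore
\begin{align*}
(\Pi_h \Delta v - \Delta_h \Pi_h v, \fy_h)_h
= \big[(\Pi_h \Delta v, \fy_h)_h - (\Delta v, \fy_h)\big] + (\nabla \Pi_h v - \nabla v, \nabla \fy_h) =: E_1 + E_2.
\end{align*}

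For $E_2$ I would invoke the standard approximation property of the Lagrange interpolant, $\|\nabla(\Pi_h v - v)\|_{L^2\II} \le Ch^{r}\|v\|_{H^{r+1}}$, which would only give $O(h^r)$ — so here the key trick (this is the point flagged in the paper's Remark on the improvement over \cite{LiYangZhou:sisc}) is instead to integrate by parts once more and exploit superconvergence/orthogonality of the interpolation error. Writing $E_2 = (\nabla(\Pi_h v - v), \nabla \fy_h)$ and integrating by parts elementwise, using $\Delta \fy_h|_{I_i} \in P_{r-2}$ and the orthogonality properties of the Gauss--Lobatto points (on each element $\Pi_h v - v$ is a multiple of the degree-$(r+1)$ nodal polynomial up to higher order terms), one gains an extra power of $h$, arriving at $|E_2| \le Ch^{r+1}\|v\|_{H^{r+2}}\|\fy_h\|_{H^1\II}$; the boundary terms from the elementwise integration by parts telescope and vanish thanks to the continuity of $\fy_h$ and the Neumann condition. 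For $E_1$, the quadrature error $(\Pi_h \Delta v, \fy_h)_h - (\Delta v, \fy_h)$ I would split as $\big[(\Pi_h \Delta v, \fy_h)_h - (\Pi_h \Delta v, \fy_h)\big] + \big[(\Pi_h \Delta v - \Delta v, \fy_h)\big]$; the second bracket is bounded by $Ch^{r+1}\|\Delta v\|_{H^{r+1}}\|\fy_h\|_{L^2\II}$ by interpolation error estimates, and the first bracket is the Gauss--Lobatto quadrature error applied to the polynomial-times-polynomial integrand $\Pi_h(\Delta v)\,\fy_h$, which since the $(r+1)$-point rule is exact for degree $2r-1$ leaves a remainder controlled by $Ch^{r+1}$ times appropriate Sobolev norms of $\Delta v$ and the $H^1$ norm of $\fy_h$ (using inverse estimates to pass from $\|\fy_h\|_{H^r}$-type quantities on each element back to $\|\fy_h\|_{H^1}$, or more directly using the known quadrature-error bounds for the lumped mass method). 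Collecting $|E_1| + |E_2| \le Ch^{r+1}\|v\|_{H^{2r+2}}\|\fy_h\|_{H^1\II}$ gives the claim; note the norm $H^{2r+2}$ on the right-hand side is what accommodates taking up to $r$ extra derivatives of $\Delta v$ in the quadrature-error analysis.

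The main obstacle I anticipate is the sharp treatment of $E_1$: getting the full extra power $h^{r+1}$ out of the Gauss--Lobatto quadrature error (rather than the naive $h^r$) requires carefully accounting for the fact that the quadrature is \emph{exact} on polynomials of degree $2r-1$ and using a Bramble--Hilbert / Peano-kernel argument on each element for the integrand $\Pi_h(\Delta v)\cdot\fy_h$, then summing over elements with the correct scaling; one must be careful that the derivatives falling on $\fy_h$ are reabsorbed via an inverse inequality without picking up negative powers of $h$, and that the derivatives falling on $\Delta v$ account correctly for why $v \in H^{2r+2}$ is the natural regularity requirement. The elementwise integration by parts in $E_2$ and the bookkeeping of which boundary terms survive is routine but must be done with care so that the continuity of $\fy_h \in S_h^r$ and the Neumann condition on $v$ are genuinely used to kill the inter-element and boundary contributions.
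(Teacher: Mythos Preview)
Your decomposition into $E_1$ (quadrature error) and $E_2$ (interpolation error in the gradient) is exactly the one the paper uses. A few points of comparison and correction:

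For $E_1$, your further split is unnecessary: since the Gauss--Lobatto quadrature evaluates at the interpolation nodes, $(\Pi_h\Delta v,\fy_h)_h=(\Delta v,\fy_h)_h$ identically, so $E_1=(\Delta v,\fy_h)_h-(\Delta v,\fy_h)$ is already a pure quadrature error on the integrand $\Delta v\cdot\fy_h$. The paper then applies Bramble--Hilbert directly (exactness on degree $2r-1$ gives $Ch^{2r}\|\Delta v\cdot\fy_h\|_{W^{2r,1}(I_i)}$ elementwise), uses Leibniz together with $\fy_h^{(j)}=0$ for $j>r$, and finishes with the inverse inequality $\|\fy_h\|_{H^r(I_i)}\le Ch^{-(r-1)}\|\fy_h\|_{H^1(I_i)}$ to reach $Ch^{r+1}\|v\|_{H^{2r+2}}\|\fy_h\|_{H^1}$.

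For $E_2$, your orthogonality idea (the Gauss--Lobatto nodal polynomial is indeed $L^2$-orthogonal to $P_{r-2}$) can be made to work, but the paper takes a slicker route. After the elementwise integration by parts it observes that $\Pi_h v\cdot\partial_x^2\fy_h$ has degree at most $2r-2$, so its quadrature is exact and coincides with the discrete sum using $v$ at the nodes; hence
\[
\sum_i\int_{I_i}(v-\Pi_h v)\,\partial_x^2\fy_h\,\d x=\sum_i\Big(\int_{I_i} v\,\partial_x^2\fy_h\,\d x-\sum_j\omega_j\,v(x_j)\,\partial_x^2\fy_h(x_j)\Big),
\]
i.e.\ $E_2$ is itself a quadrature error, handled by the same Bramble--Hilbert/inverse-inequality argument as $E_1$. (For $r=1$ the paper simply notes $\partial_x^2\fy_h\equiv 0$, so $E_2=0$.) This reduction is the concrete mechanism behind the extra power of $h$; your ``orthogonality'' description is morally equivalent but would need the additional decomposition $v-\Pi_h v=c\,\omega+R$ with a controlled remainder to be rigorous.

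One correction: the elementwise boundary terms are $[(v-\Pi_h v)\,\partial_x\fy_h]_{x_{(i-1)r}}^{x_{ir}}$, and they vanish simply because $v-\Pi_h v=0$ at every element endpoint (these are Gauss--Lobatto, hence interpolation, nodes). Continuity of $\fy_h$ is irrelevant here --- it is $\partial_x\fy_h$, generally discontinuous across elements, that appears --- and the Neumann condition on $v$ plays no role in this step.
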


\begin{proof}
Using the homogeneous Neumann boundary condition
and  \eqref{eqn:disc-lap}, we obtain
\begin{equation}\label{eqn:consist-fully}
\begin{aligned}
  &\quad(\Pi_h \Delta v - \Delta_h \Pi_h v, \fy_h )_h \\
  &= (\Pi_h \Delta v , \fy_h )_h -  (\Delta_h \Pi_h v, \fy_h )_h\\
  &= \Big(( \Delta v , \fy_h )_h - ( \Delta v , \fy_h )\Big) +  \Big(( \Delta v , \fy_h ) -  (\Delta_h \Pi_h v, \fy_h )_h\Big)\\
    &= \Big(( \Delta v , \fy_h )_h - ( \Delta v , \fy_h )\Big) +  \Big(( \partial_x v , \partial_x \fy_h ) -  (\partial_x \Pi_h v, \partial_x \fy_h )\Big)\\
\end{aligned}
\end{equation}
Since the $(r + 1)$-point Gauss--Lobatto quadrature on each subinterval $I_i$ is exact for polynomials of degree
$2r - 1$ \cite[pp. 425]{Quarteroni:2000}, employing the Bramble--Hilbert lemma
as well as the inverse inequality, we derive that
\begin{align*}
|( \Delta v , \fy_h )_h - ( \Delta v , \fy_h ) | &= \Big|\sum_{i=1}^M \Big(\sum_{j=0}^{r} \omega_j  (\Delta v   \fy_h)(x_{(i-1)r+j}) - \int_{I_i} (\Delta v ) \fy_h \,\d x\Big) \Big|\\
&\le C h^{2r} \sum_{i=1}^M \| \Delta v   \fy_h  \|_{W^{2r,1}(I_i)} \le  C h^{2r} \sum_{i=1}^M \|   v  \|_{H^{2r+2}(I_i)}  \| \fy_h  \|_{H^{r}(I_i)}\\
&\le Ch^{r+1} \sum_{i=1}^M \|   v  \|_{H^{2r+2}(I_i)}  \| \fy_h  \|_{H^{1}(I_i)} \le Ch^{r+1}  \|   v  \|_{H^{2r+2}\II}  \| \fy_h  \|_{H^{1}\II}.
\end{align*}
Similar argument also leads to the estimate for the second term in \eqref{eqn:consist-fully} for $r\ge2$:
\begin{align*}
 |( \partial_x (v- \Pi_h v), \partial_x \fy_h )|
 &= \Big|\sum_{i=1}^M \int_{I_i}  \partial_x (v- \Pi_h v) \partial_x \fy_h \,\d x\Big|
 =\Big|\sum_{i=1}^M \int_{I_i}    (v- \Pi_h v) \partial_x^2 \fy_h \,\d x\Big|\\
& =\Big|\sum_{i=1}^M \int_{I_i}    v  \partial_x^2 \fy_h \,\d x - \sum_{j=0}^{r} \omega_j  (  v  \partial_x^2 \fy_h)(x_{(i-1)r+j}) \Big|\\
&\le C h^{2r} \sum_{i=1}^M \|  v  \partial_x^2 \fy_h  \|_{W^{2r,1}(I_i)} \le  C h^{2r} \sum_{i=1}^M \|   v  \|_{H^{2r+2}(I_i)}  \| \fy_h  \|_{H^{r}(I_i)}\\
&\le Ch^{r+1} \sum_{i=1}^M \|   v  \|_{H^{2r+2}(I_i)}  \| \fy_h  \|_{H^{1}(I_i)} \le Ch^{r+1}  \|   v  \|_{H^{2r+2}\II}  \| \fy_h  \|_{H^{1}\II}.
\end{align*}
Finally, in case that $r=1$, it is easy to observe that
\begin{align*}
 ( \partial_x (v- \Pi_h v), \partial_x \fy_h )
 &= \sum_{i=1}^M \int_{I_i}  \partial_x (v- \Pi_h v) \partial_x \fy_h \,\d x
 =-\sum_{i=1}^M \int_{I_i}    (v- \Pi_h v) \partial_x^2 \fy_h \,\d x=0.
\end{align*}
\end{proof}

To derive an error estimate for the fully discrete scheme \eqref{eqn:fully}-\eqref{eqn:fully-cut}. We need the following extra assumptions on the
rational function $\sigma(\lambda)$.
\begin{itemize}
\item[{\bf (P4)}] The rational function $\sigma(\lambda)$ satisfies $|\sigma(\lambda)| \rightarrow 0$ as $\lambda\rightarrow\infty$.
\end{itemize}
Note that the Assumption (P4) immediately implies \cite[eq. (7.37)]{Thomee:2006}
\begin{align*}
 |\sigma(\lambda)| \le \frac{1}{1+c_0\lambda} \qquad \text{for any}~~\lambda\ge 0,
\end{align*}
with a generic constant $c_0>0$. This further implies
\begin{align*}
 1-|\sigma(\lambda)|^{-2} \le - 2 c_0 \lambda\qquad \text{for any}~~\lambda\ge 0.
\end{align*}
Therefore, we have for any $v_h\in S_h^r$
\begin{align*}
 \|  \sigma(-\tau\Delta_h) v_h \|_h^2 &= \sum_{j=1}^{Mr+1} |\sigma(\tau\lambda_j)|^2 (v_h, \fy_j^h)_h^2
 = \|  v_h \|_h^2 +  \sum_{j=1}^{Mr+1} (|\sigma(\tau\lambda_j)|^2-1) (v_h, \fy_j^h)_h^2 \\
 &=\|  v_h \|_h^2 + \sum_{j=1}^{Mr+1} (1-|\sigma(\tau\lambda_j)|^{-2}) |\sigma(\tau\lambda_j)|^2(v_h, \fy_j^h)_h^2\\
 &\le \|  v_h \|_h^2 - 2c_0\tau \sum_{j=1}^{Mr+1}  \lambda_j |\sigma(\tau\lambda_j)|^2(v_h, \fy_j^h)_h^2
 = \|  v_h \|_h^2 - 2c_0\tau \| \nabla \sigma(-\tau\Delta_h) v_h\|^2.
\end{align*}

Then we are ready to state following main theorem.

\begin{theorem}\label{thm:fully-error}
Suppose that the Assumptions (P1), (P2) and (P4) are fulfilled, and (P3) holds for $q=k$.
Assume that $|u_0|\le \alpha$ and the maximum principle \eqref{eqn:max-AC} holds,
 and assume that the starting values $u_h^l$, $l=0,\dots,k-1$, are given and
$$
|u_h^l(x_j)|\le \alpha , \quad j=0,\dots,Mr,\quad l=0,\dots,k-1.
$$
Then the   {fully discrete} solution given by \eqref{eqn:fully}-\eqref{eqn:fully-cut} satisfies
\begin{align*}
|u_h^n(x_j)|\le \alpha , \quad j=0,\dots,Mr,\quad n=k,\dots,N,
\end{align*}
and  {for $n=k,\ldots,N$}
\begin{align*}
  \| u(t_n) - u_h^n \|_{L^2(\Omega)} \le  C(\tau^k + h^{r+1}) + C\sum_{l=0}^{k-1}\| u(t_l) - u_h^l \|_{L^2(\Omega)} ,
\end{align*}
provided that {$u\in C^{k+1}([0,T];C(\bar \Omega)) \cap C^{k}([0,T];\text{Dom}(\Delta)) \cap C^1([0,T];H^{2r+2}(\Omega))$, $f$ is locally Lipschitz continuous and $f(u)\in C^k([0,T];L^2\II)\cap C([0,T];H^{2r+2}\II)$.}
\end{theorem}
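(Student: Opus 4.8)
Since the cut-off \eqref{eqn:fully-cut} forces $|u_h^n(x_j)|\le\alpha$ at every nodal point, the discrete maximum bound principle is immediate (and an induction on $n$ shows that \eqref{eqn:fully}--\eqref{eqn:fully-cut} is well defined). For the error bound I would work throughout with the scaled discrete norm $\|\cdot\|_h$, passing to $\|\cdot\|_{L^2\II}$ only at the end via Lemma~\ref{lem:norm-equivalence}. Split $u(t_n)-u_h^n=(u(t_n)-\p u(t_n))+(\p u(t_n)-u_h^n)$; the first term is $O(h^{r+1})$ by the standard interpolation estimate, so it remains to bound $e_h^n:=u_h^n-\p u(t_n)$. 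Writing $\hat e_h^n:=\hat u_h^n-\p u(t_n)$ and using that $|\p u(t_n)(x_j)|=|u(t_n)(x_j)|\le\alpha$ together with the fact that $s\mapsto\min(\max(s,-\alpha),\alpha)$ is a $1$-Lipschitz retraction of $\R$ onto $[-\alpha,\alpha]$, the cut-off is non-expansive at every node, whence $\|e_h^n\|_h\le\|\hat e_h^n\|_h$.

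Next I would set up the error equation. Combining \eqref{eqn:fully} with the temporal identity for the exact solution (the Taylor/Duhamel expansion from the proof of Theorem~\ref{thm:semi}, with $\Delta$ replaced by $\Delta_h$ and $\p$ inserted) gives
\begin{equation*}
\hat e_h^n=\sigma(-\tau\Delta_h)e_h^{n-1}+\tau\sum_{i=1}^m p_i(-\tau\Delta_h)\sum_{j=1}^k L_j(t_{ni})\,\p\bigl(f(u_h^{n-j})-f(u(t_{n-j}))\bigr)+\Phi_h^n ,\qquad \Phi_h^n=\Phi_{h,\mathrm t}^n+\Phi_{h,\mathrm s}^n .
\end{equation*}
Here $\Phi_{h,\mathrm t}^n$ is exactly the truncation error analysed in Theorem~\ref{thm:semi}: (P1), (P2) and (P3) with $q=k$ annihilate the leading terms, the uniform boundedness of $\gamma(\lambda)$ on $[0,\infty)$ disposes of the rest, and Lemma~\ref{lem:stable-fully} yields $\|\Phi_{h,\mathrm t}^n\|_h\le C\tau^{k+1}$. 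The spatial part $\Phi_{h,\mathrm s}^n$ gathers the finite element consistency errors, namely the mismatch between $\sigma(-\tau\Delta_h)\p$, $p_i(-\tau\Delta_h)\p$ and $\p\,\sigma(-\tau\Delta)$, $\p\,p_i(-\tau\Delta)$ acting on $u(t_{n-1})$ and on $f(u(t_{ni}))$. Via the resolvent identity each such term takes the shape $\tau\Delta_h$ composed with a $\|\cdot\|_h$-bounded rational function of $-\tau\Delta_h$, applied to a consistency error of the form $\p\Delta v-\Delta_h\p v$, with $v$ a resolvent of $u(t_{n-1})$ or of $f(u(t_{ni}))$. The uniform boundedness of the operators $\tau\Delta_h\,\sigma(-\tau\Delta_h)$ is precisely where Assumption (P4) is used: it forces the numerator of $\sigma$ to be of lower degree than its denominator, exactly as (P1) does for the $p_i$, so Lemma~\ref{lem:stable-fully} — in particular \eqref{eqn:stab-est-2} — applies.

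The crux is the sharp spatial bound, which is where the suboptimal $O(h^r)$ of \cite{LiYangZhou:sisc} is improved. For smooth $v$ with homogeneous Neumann data, Lemma~\ref{lem:truncation} provides $(\p\Delta v-\Delta_h\p v,\fy_h)_h\le Ch^{r+1}\|v\|_{H^{2r+2}\II}\|\fy_h\|_{H^1\II}$; the extra power of $h$ over \cite{LiYangZhou:sisc} comes from using exactness of the $(r+1)$-point Gauss--Lobatto rule for degree $2r-1$ and from shifting $r-1$ derivatives onto $v$ rather than leaving them on $\fy_h$. An Aubin--Nitsche duality argument (test against $(-\Delta_h)^{-1}\fy_h$, use $\|(-\Delta_h)^{-1}\fy_h\|_{H^1\II}\le C\|\fy_h\|_h$) then upgrades this to $\|(-\Delta_h)^{-1}(\p\Delta v-\Delta_h\p v)\|_h\le Ch^{r+1}\|v\|_{H^{2r+2}\II}$, and combined with the structure of $\Phi_{h,\mathrm s}^n$ above this gives $\|\Phi_{h,\mathrm s}^n\|_h\le C\tau h^{r+1}$ (the $\tau$ being produced by the resolvent identity); equivalently, one may keep only the pointwise bound $(\Phi_{h,\mathrm s}^n,\fy_h)_h\le C\tau h^{r+1}\|\fy_h\|_{H^1\II}$ and absorb the $H^1$-norm in the energy step below.

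Finally I would close the estimate by an energy argument. Squaring $\|e_h^n\|_h\le\|\hat e_h^n\|_h$, expanding, and inserting the L-stability consequence displayed just before the statement, $\|\sigma(-\tau\Delta_h)v_h\|_h^2\le\|v_h\|_h^2-2c_0\tau\|\nabla\sigma(-\tau\Delta_h)v_h\|^2$, I would bound: the nonlinear difference by $C\tau\sum_{j=1}^k\|e_h^{n-j}\|_h$ using local Lipschitz continuity of $f$ on $[-\alpha,\alpha]$ and $|u_h^{n-j}(x_j)|,|u(t_{n-j})(x_j)|\le\alpha$; the temporal truncation by $C\tau^{k+1}$; and the spatial term by testing against $\sigma(-\tau\Delta_h)e_h^{n-1}$, applying Lemmas~\ref{lem:truncation} and~\ref{lem:stable-fully}, and using Young's inequality so that the resulting $\|\nabla\sigma(-\tau\Delta_h)e_h^{n-1}\|$ is absorbed into the dissipation term $-2c_0\tau\|\nabla\sigma(-\tau\Delta_h)e_h^{n-1}\|^2$, leaving a summable remainder $C\tau h^{2r+2}+C\tau\|e_h^{n-1}\|_h^2$. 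This yields, for $n\ge k$,
\begin{equation*}
\|e_h^n\|_h^2\le(1+C\tau)\|e_h^{n-1}\|_h^2+C\tau\sum_{j=1}^k\|e_h^{n-j}\|_h^2+C\tau^{2k+1}+C\tau h^{2r+2},
\end{equation*}
and the discrete Gronwall inequality — together with $\|u(t_l)-\p u(t_l)\|_{L^2\II}\le Ch^{r+1}$, the conversion of the discrete norms of the starting errors $e_h^l$ ($l<k$) back to $\|u(t_l)-u_h^l\|_{L^2\II}$ up to $O(h^{r+1})$, and Lemma~\ref{lem:norm-equivalence} — gives the assertion. The main obstacle is exactly this sharp spatial bound: the naive $\|\cdot\|_h$-estimate of the quadrature/consistency error is only $O(h^r)$, and squeezing out the optimal $O(h^{r+1})$ forces one to combine the refined quadrature bound of Lemma~\ref{lem:truncation} with the L-stability (P4) (whose gradient-dissipation consequence is what absorbs the $H^1$-norm of the test function) — all without incurring a parabolic CFL restriction, and while respecting that the cut-off is well behaved only in $\|\cdot\|_h$ and only relative to $\p u(t_n)$, whose nodal values are bounded by $\alpha$. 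A minor additional nuisance is bookkeeping the many truncation and consistency terms generated by a general rational pair $(\sigma,\{p_i\}_{i=1}^m)$, best organised through the resolvent identity and Assumptions (P1)--(P4).
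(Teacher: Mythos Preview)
Your overall strategy is the paper's: split through $\p u(t_n)$, use that the cut-off is a nodal contraction towards $[-\alpha,\alpha]$ so $\|e_h^n\|_h\le\|\hat e_h^n\|_h$, write an error equation driven by a nonlinear term, a temporal truncation (handled exactly as in Theorem~\ref{thm:semi}), and a spatial consistency involving $g_h=(\p\Delta-\Delta_h\p)u$, then close via the $L$-stability dissipation and Gronwall. Where your proposal diverges from the paper --- and where it has a genuine gap --- is in the treatment of the spatial consistency term.

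Two related problems. First, the structural claim that, via the resolvent identity, $\Phi_{h,\mathrm s}^n$ has the form ``$\tau\Delta_h$ composed with a bounded rational function applied to $\p\Delta v-\Delta_h\p v$'' is not right: working it out for $\sigma(\lambda)=(1+\lambda)^{-1}$ one finds $\sigma(-\tau\Delta_h)\p u-\p\sigma(-\tau\Delta)u=-\tau(I-\tau\Delta_h)^{-1}g_h^v$ with $v=(I-\tau\Delta)^{-1}u$, i.e.\ $\tau R(-\tau\Delta_h)g_h^v$ with \emph{no} factor $\Delta_h$ outside. So your Aubin--Nitsche step, which relies on extracting $(-\Delta_h)^{-1}$, does not apply, and in fact $\|\Phi_{h,\mathrm s}^n\|_h$ is in general only $O(\tau h^r)$: from Lemma~\ref{lem:truncation} one has $\|g_h\|_h^2=(g_h,g_h)_h\le Ch^{r+1}\|g_h\|_{H^1}\le Ch^{r}\|g_h\|_h$ by an inverse inequality, hence $\|g_h\|_h\le Ch^r$ only. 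Second, in your energy step you ``square and expand'' and then test the spatial term against $A:=\sigma(-\tau\Delta_h)e_h^{n-1}$; this controls the cross term $2(A,\Phi_{h,\mathrm s}^n)_h$ by absorbing $\|\nabla A\|$ into $-2c_0\tau\|\nabla A\|^2$, but it leaves $\|\Phi_{h,\mathrm s}^n\|_h^2$ (and the cross terms of $\Phi_{h,\mathrm s}^n$ with the nonlinear part) bounded only by $C\tau^2h^{2r}$, which after summation produces $O(\sqrt\tau\,h^r)$ rather than $O(h^{r+1})$.

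The paper closes this gap not by bounding $\|\Phi_{h,\mathrm s}^n\|_h$ directly but by never letting that quantity appear. It takes the inner product of the error equation with $\rho_h^n:=\hat u_h^n-w_h^n$ (so only $(\Phi_{h,\mathrm s}^n,\rho_h^n)_h$ enters, not $\|\Phi_{h,\mathrm s}^n\|_h^2$), and then --- this is the missing idea --- substitutes $\sigma(-\tau\Delta_h)e_h^{n-1}=\rho_h^n-I_1^n-I_2^n$ \emph{inside} the dissipation term, so that $-c_0\tau\|\nabla\sigma(-\tau\Delta_h)e_h^{n-1}\|^2$ is traded for $-c_0\tau\|\nabla\rho_h^n\|^2$ plus cross terms $2c_0\tau(\nabla\rho_h^n,\nabla(I_1^n+I_2^n))=-2c_0\tau(\rho_h^n,\Delta_h(I_1^n+I_2^n))_h$. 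Now the $H^1$-norm produced by Lemma~\ref{lem:truncation} is $\|\rho_h^n\|_{H^1}$, which matches the dissipation and is absorbed; the new cross terms are controlled by the bound $\tau\|\Delta_h p_i(-\tau\Delta_h)v_h\|_h\le C\|v_h\|_h$ of Lemma~\ref{lem:stable-fully}. Without this substitution the dissipation is in the wrong variable and the argument does not close at the optimal rate.
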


\begin{proof}
In $[t_{n-1}, t_{n}]$, we note that $\Pi_h u$ satisfies
\begin{align*}
 \partial_t \Pi_h u(t) - \Delta_h \Pi_h u(t) = \Pi_h f(u(t)) + g_h(t),~~t\in(t_{n-1}, t_{n}], \quad\text{with}~ \Pi_h u(t_{n-1}) ~\text{given},
\end{align*}
and $g_h(t) = ( \Pi_h \Delta - \Delta_h \Pi_h) u(t)$.
Then we define its time stepping approximation $w_h^n$ satisfying
\begin{align*}
 w_h^n = \sigma(-\tau\Delta_h) \Pi_h u(t_{n-1}) + \tau\sum_{i=1}^m p_i(-\tau\Delta_h) \Big(\Pi_h f(u) + g_h\Big)(t_n+c_i\tau).
\end{align*}
Then the argument in Theorem \ref{thm:semi} implies that
\begin{equation*}
\begin{split}
\| \Pi_h u(t_n) - w_h^n  \|_h &\le C\tau^{k+1} \Big(\sup_{t_{n-1}\le t\le t_n}\|  \Pi_h u^{(k+1)}(t)  \|_h + \sup_{t_{n-1}\le t\le t_n} \|  \Delta_h \Pi_h u^{(k)}(t)  \|_h\Big).
\end{split}
\end{equation*}
The first term of the right hand side is bounded by $\| u \|_{C^{k+1}([0,T]; C(\bar\Omega))}$, while the second one is bounded  as
\begin{align*}
\|  \Delta_h \Pi_h u^{(k)}(t)  \|_h &= \sup_{\fy_h\in S_h^r}\frac{( \Delta_h \Pi_h u^{(k)}(t), \fy_h)_h}{ \|\fy_h\|_h}\\
&=\sup_{\fy_h\in S_h^r}\frac{( \nabla (\Pi_h u^{(k)}(t) - u^{(k)}(t)),\nabla \fy_h) + ( \nabla  u^{(k)}(t),\nabla \fy_h)}{ \|\fy_h\|_h}\\
& \le C h^{-1} \| \nabla (\Pi_h u^{(k)}(t) - u^{(k)}(t))  \|_{L^2\II} + \|  \Delta  u^{(k)}(t) \|_{L^2\II}\le C \|  u^{(k)} \|_{H^2\II}.
\end{align*}
Therefore, we conclude that
\begin{equation*}
\begin{split}
\| \Pi_h u(t_n) - w_h^n  \|_h &\le C\tau^{k+1} \Big( \| u \|_{C^{k+1}([t_{n-1},t_{n}]; C(\bar\Omega))} +   \|  u \|_{C^{k}([t_{n-1},t_{n}]; H^2(\Omega))}\Big).
\end{split}
\end{equation*}
Then the simple triangle inequality leads to
\begin{equation}\label{eqn:split}
\begin{split}
\| \hat u_h^n - \Pi_h u(t_n)  \|_h^2 &\le \Big(\| \hat u_h^n - w_h^n \|_h + \|w_h^n - \Pi_h u(t_n)   \|_h \Big)^2 \\
&\le (1+C\tau) \| \hat u_h^n - w_h^n \|_h^2 + C\tau^{2k+1}.
\end{split}
\end{equation}
Let $\rho_h^n = \hat u_h^n  - w_h^n $ and $e_h^n = u_h^n - \Pi_h u(t_n)$, then $\rho_h^n$ satisfies
\begin{align}\label{eqn:rhoh}
\rho_h^n = \sigma(-\tau \Delta_h) e_h^{n-1} + I_1^n + I_2^n
\end{align}
where
\begin{align*}
&  I_1^n = \tau \sum_{i=1}^m p_i(-\tau \Delta_h) \Big(
\sum_{j=1}^{k} L_j (t_{n-1} + c_i\tau) \Pi_h f(u_h^{n-j}) - \Pi_h f(u(t_{n-1}+c_i\tau)) \Big) ,\\
& \text{and}~~
I_2^n = - \tau \sum_{i=1}^m p_i(-\tau \Delta_h) g_h(t_{n-1}+c_i\tau).
\end{align*}
Now take the discrete inner product between \eqref{eqn:rhoh} and $\rho_h^n$
\begin{align*}
 \| \rho_h^n \|_h^2 = (\sigma(-\tau \Delta_h) e_h^{n-1}, \rho_h^n)_h + (I_1^n,\rho_h^n)_h + (I_2^n,\rho_h^n)_h.
\end{align*}
Then first term, we apply the Assumption (P4) to obtain that
\begin{align*}
(\sigma(-\tau \Delta_h) e_h^{n-1}, \rho_h^n) &\le \frac12\|  \sigma(-\tau \Delta_h) e_h^{n-1} \|_h^2 + \frac12\| \rho_h^n\|_h^2 \\
&\le \frac12\|  e_h^{n-1} \|_h^2 - c_0\tau \| \nabla \sigma(-\tau \Delta_h) e_h^{n-1} \|^2 + \frac12\| \rho_h^n\|_h^2\\
&\le \frac12\|  e_h^{n-1} \|_h^2 - c_0\tau \| \nabla (\rho_h^n - I_1^n - I_2^n) \|^2 + |  |+ \frac12\| \rho_h^n\|_h^2\\
&\le \frac12\|  e_h^{n-1} \|_h^2 - c_0\tau \| \nabla \rho_h^n\|^2 - c_0\tau \| \nabla(I_1^n +I_2^n) \|^2\\
&\qquad+2c_0 \tau (\nabla \rho_h^n,\nabla (I_1^n +I_2^n))+ \frac12\| \rho_h^n\|^2
\end{align*}
Then applying the definition of $\Delta_h$, we arrive at
\begin{equation}\label{eqn:rhoh-3}
\begin{aligned}
 \frac12 \| \rho_h^n \|_h^2 
&\le \frac12\|  e_h^{n-1} \|_h^2 - c_0\tau \| \nabla \rho_h^n\|^2\\
&\quad- 2c_0\tau ( \rho_h^n,\Delta_h (I_1^n+I_2^n))_h + (I_1^n,\rho_h^n)_h + (I_2^n,\rho_h^n)_h.
\end{aligned}
\end{equation}
By using the approximation property of interpolation $I_\tau^k$, Lemma \ref{lem:stable-fully},
 and the fact that $u_h^{n-k},\dots,u_h^{n-1}$ satisfies the maximum bound,
we bound the fourth term in \eqref{eqn:rhoh-3}  as
\begin{align*}
 | (I_1^n,\rho_h^n)_h |  &\le   \tau \sum_{i=1}^m \Big|\Big( \sum_{j=1}^k L_j(t_{n-1} + c_i\tau) \Pi_hf(u(t_{n-j}))- \Pi_h f(u(t_{n-1} + c_i\tau), p_i(-\tau \Delta_h)\rho_h^n\Big)_h\Big|   \\
 +&  \tau \sum_{i=1}^m \Big| \Big( \sum_{j=1}^k L_j(t_{n-1} + c_i\tau) (\Pi_h f(u(t_{n-j}))-\Pi_hf(u_h^{n-j})), p_i(-\tau \Delta)\rho_h^n\Big)_h \Big| \\
   &\le  C \tau \sum_{i=1}^m \|p_i(-\tau \Delta_h)\rho_h^n \|_h  \sum_{j=1}^k  \|\Pi_hf(u(t_{n-j}))- \Pi_h f(u^{n-j})\|_h   \\
     &\quad+  C \tau^{k+1} \sum_{i=1}^m  \|p_i(-\tau \Delta_h)\rho_h^n \|_h  \|  \Pi_h f(u)  \|_{C^{k}([t_{n-k},t_n]; L^2\II)} \\
 &\le C\tau^{2k+1} \|  \Pi_h f(u)  \|_{C^{k}([t_{n-k},t_n]; L^2\II)}^2 + C \tau \sum_{j=1}^k \|  e_h^{n-j}  \|_h^2 +  C\tau \|  \rho_h^n \|_h^2 \\
  &\le C\tau^{2k+1} \|  f(u)  \|_{C^{k}([t_{n-k},t_n]; C(\bar\Omega))}^2 + C \tau \sum_{j=1}^k \|  e_h^{n-j}  \|_h^2 +  C\tau \|  \rho_h^n \|_h^2.
\end{align*}
The fifth term in \eqref{eqn:rhoh-3} can be bounded by using lemmas \ref{lem:stable-fully} and \ref{lem:truncation}, i.e.,
\begin{equation}\label{eqn:fully-I2}
\begin{split}
| (I_2^n,\rho_h^n)_h | &\le C \tau \sum_{i=1}^m |(g_h(t_{n-1}+c_i\tau), p_i(-\tau\Delta_h)\rho_h^n)_h| \\
&\le C\tau \sum_{i=1}^m h^{r+1} \| u(t_{n-1}+c_i\tau)  \|_{H^{2r+2}(\Omega)} \| p_i(-\tau\Delta_h) \rho_h^n \|_{H^1\II} \\
&\le \frac{C \tau h^{2r+2}}{\eta}  \| u  \|_{C([t_{n-1},t_n];H^{2r+2}(\Omega))}^2 + C\tau \eta  \| \rho_h^n \|_{H^1\II}^2.
\end{split}
 \end{equation}
For the third term in the right hand side of \eqref{eqn:rhoh-3}, we shall apply the preceding argument again, together with the stability estimate
\eqref{eqn:stab-est-2}, and obtain that
\begin{equation}\label{eqn:fully-I3}
\begin{split}
\tau |( \rho_h^n,\Delta_h (I_1^n+I_2^n))_h| &\le  C \tau^2 \sum_{i=1}^m \|\Delta_h p_i(-\tau \Delta_h)\rho_h^n \|_h  \sum_{j=1}^k  \|\Pi_hf(u(t_{n-j}))- \Pi_h f(u^{n-j})\|_h   \\
     &\quad+  C \tau^{k+2} \sum_{i=1}^m  \| \Delta_hp_i(-\tau \Delta_h)\rho_h^n \|_h  \|  \Pi_h f(u)  \|_{C^{k}([t_{n-k},t_n]; L^2\II)} \\
&\quad+ C\tau^2 \sum_{i=1}^m h^{r+1} \| u(t_{n-1}+c_i\tau)  \|_{H^{2r+2}(\Omega)} \| \Delta_h p_i(-\tau\Delta_h) \rho_h^n \|_{H^1\II} \\
 &\le C\tau^{2k+1} \|  f(u)  \|_{C^{k}([t_{n-k},t_n]; C(\bar\Omega))}^2 + C \tau \sum_{j=1}^k \|  e_h^{n-j}  \|_h^2 +  C\tau \|  \rho_h^n \|_h^2\\
&\quad+\frac{C \tau h^{2r+2}}{\eta}  \| u  \|_{C([t_{n-1},t_n];H^{2r+2}(\Omega))}^2 + C\tau \eta  \| \rho_h^n \|_{H^1\II}^2.
\end{split}
 \end{equation}
Then by choosing $\eta$ small, we arrive at
\begin{align*}
(1-C\tau) \| \rho_h^n \|_h^2 &\le  \|  e_h^{n-1} \|_h^2   + C \tau \sum_{j=1}^k  \|  e_h^{n-j} \|_h^2
+ C\tau (\tau^{2k} + h^{2r+2}).
\end{align*}
This together with \eqref{eqn:split} and the property of the cut-off operation lead to
\begin{align*}
\| e_h^n  \|_h^2 &\le \| \hat u_h^n - \Pi_h u(t_n) \|_h^2   \le (1+C\tau) \| \rho_h^n \|_h^2 + c\tau^{2k+1}\\
&\le\| e_h^{n-1}  \|_h^2 + C \tau \sum_{j=1}^k  \|  e_h^{n-j} \|_h^2  + C\tau (\tau^{2k} + h^{2r+2}),
\end{align*}
and hence we rearrange terms and obtain
\begin{align*}
\frac{\|  e_h^n \|_h^2 -  \|  e_h^{n-1} \|_h^2 }{\tau} \le  C (\tau^{2k} + h^{2r+2})  + C  \sum_{j=1}^k \|  e_h^{n-j}  \|_h^2.
\end{align*}
Then the discrete Gronwall's inequality implies
\begin{align*}
\|  e_h^n \|_h^2  \le  C e^{cT} (\tau^{2k}+h^{2r+2})  + C e^{cT}  \sum_{j=0}^{k-1} \|  e_h^{j}  \|_h^2,
\end{align*}
and the desired error estimate follows from the equivalence of different norms by Lemma \ref{lem:norm-equivalence}.
\end{proof}

\begin{remark}\label{rem:optimal-LiYangZhou}
In \cite{LiYangZhou:sisc}, an error estimate $O(\tau^k + h^r)$,
which is  suboptimal in space, was derived
for the multistep exponential integrator method by using energy argument.
The loss of the optimal convergence rate is due to the
suboptimal estimate of the term $(\partial_x(\Pi_h u  -  u), \partial_x v_h)$
in \cite[eq. (2.6) and (3.22)]{LiYangZhou:sisc}. The optimal rate could be also
proved by using Lemma \ref{lem:truncation}.

The Assumption (P4) , called
\textsl{L-stability}, is useful when solving stiff problems.
It is also essential in the proof of Theorem \ref{thm:fully-error} to derive the optimal error estimate
of the extrapolated cut-off single step scheme. In particular,
Assumption (P4) immediately leads to the estimate
\begin{align*}
 \|  \sigma(-\tau\Delta_h) v_h \|_h^2 \le \|  v_h \|_h^2 - 2c_0\tau \| \nabla \sigma(-\tau\Delta_h) v_h\|^2,
\end{align*}
where the second term in the right side is used to handle the term involving $\|  \rho_h^n \|_{H^1\II}$
in \eqref{eqn:fully-I2} and \eqref{eqn:fully-I3}.
Many single step methods, e.g., Lobatto IIIC and Radau IIA methods are L-stable \cite{Ehle:thesis,HairerWanner:2010}.
For both classes, arbitrarily high-order methods can be constructed.
Nevertheless, it is not clear how to remove the restriction (P4) in general.
\end{remark}

\begin{remark}\label{rem:highdimension}
It is straightforward to extend the argument to higher dimensional problems, e.g., $\Omega$ is
a multi-dimensional rectangular domain $(a, b)^d \subset\R^d$, with $d\ge 2$.
Then we can divide $\Omega$ in to some small sub-rectangles, called partition $\mathcal{K}$,
and  apply the tensor-product Lagrange finite elements on the partition $\mathcal{K}$.
As a result, Lemma \ref{lem:truncation} is still valid, which implies the desired error estimate.
See more details about the setting for multi-dimensional problems in \cite[Section 2.2]{LiYangZhou:sisc}.
\end{remark}

\section{Collocation-type methods with the cut-off postprocessing}\label{sec:collocation}
Note that the Assumption (P4) excludes some
popular methods, e.g., Gauss--Legendre methods.
This motivates us to discuss the collocation-type schemes,
which belong to implicit Runge--Kutta methods, and derive error estimate without Assumption (P4).
This class of time stepping methods is easy to implement,
and plays an essential role in the next section to develop an energy-stable scheme.
For simplicity, we only present the argument for one-dimensional case,
and it can be extended to multi-dimensional cases straightforwardly as mentioned in Remark \ref{rem:highdimension}.

\begin{table}[h]
\begin{center}
\begin{tabular}{lll|ll}
$a_{11}$  &\ldots &$a_{1m}$ &$c_1$  &  \\
\vdots    &       &\vdots   &\vdots &  \\
$a_{m1}$  &\ldots &$a_{mm}$ &$c_m$  &  \\ \cline{1-4}
$b_{1}$   &\ldots & $b_{m}$ & &
\end{tabular}
\end{center}
\caption{Butcher tableau for Runge--Kutta scheme.}\label{tab:RK}
\end{table}

Now we consider an $m$-stage Runge--Kutta method, described by the Butcher tableau \ref{tab:RK}. 
Here $\{c_i\}_{i=1}^m$ denotes $m$ distinct quadrature points.
\begin{definition}
We call a Runge--Kutta method is algebraically stable if the method satisfies
\begin{itemize}
\item[{\bf (P5)(a)}] The matrix $A = (a_{ij})$, with $i,j=1,\ldots,m$ is invertible;
\item[{\bf (P5)(b)}] The coefficients $b_i$ satisfy $b_i>0$ for $i=1,2,\dots,m$;
\item[{\bf (P5)(c)}] The  symmetric matrix $\mathcal{M}\in  \mathbb{R}^{m\times m }$
with entries $m_{ij} := b_ia_{ij} + b_ja_{ji} - b_ib_j$, $i, j = 1,\ldots,m$
is positive semidefinite. 
\end{itemize}
\end{definition}

Here we assume that the Runge--Kutta scheme described  by Table \ref{tab:RK}
associates with a collocation method, i.e., coefficients $a_{ij}, b_i, c_i$ satisfy
\begin{align}
        \sum_{i=1}^m b_i c_i^{l-1} &= \frac1l, \quad l = 1, \cdots, p,\label{eqn:accuracy} \\
        \sum_{j=1}^m a_{ij} c_j^{l-1} &= \frac{c_i^l}l, \quad l = 1, \cdots, m,\label{eqn:stage-order}
\end{align}
with some integers $p\ge m$. Two popular families of algebraically stable Runge--Kutta
methods of collocation type satisfying (2.6) of orders $p = 2m$ and $p = 2m - 1$
 are the Gauss--Legendre  methods and the Radau IIA methods respectively. For both classes,
arbitrarily high order methods can be constructed. Note that the Gauss--Legendre methods
are not L-stable \cite{HairerWanner:2010}.


In particular, at level $n$, with given $u_h^{n-k},\ldots, u_h^{n-1}\in S_h^r$, we find an intermediate solution $\hat u_h^{n}\in S_h^r$ such that
\begin{equation}\label{eqn:RK-fully}
    \begin{cases}
    \dot u_h^{ni} = \Delta_h u_h^{ni} +  \sum_{\ell=1}^k L_{\ell}(t_{n-1}+c_i\tau)\Pi_h f(u_h^{n-\ell}) &\quad \text{for} ~~i=1,2,\dots,m, \\ 
    u_h^{ni} = u_h^{n-1} + \tau \sum_{j=1}^m a_{ij} \dot u_h^{nj} &\quad \text{for} ~~i=1,2,\dots,m, \\
    \hat u_h^{n} = u_h^{n-1} + \tau \sum_{i=1}^m b_i \dot u_h^{ni},
  \end{cases}
\end{equation}
where $k=\min(p,m+1)$, and $\Pi_h:C(\overline\Omega)\rightarrow S_h^r$ is the Lagrange interpolation operator.
Then we apply the cut-off operation: find $u_h^n\in S_h^r$ such that
\begin{equation}\label{eqn:RK-cut}
u_h^n(x_j)  = \min\big(\max \big(\hat u_h^n(x_j), - \alpha \big) , \alpha \big),  \quad j=0,\dots,Mr.
\end{equation}

\begin{remark}\label{rem:equiv}
Note
that the scheme \eqref{eqn:RK-fully} is equivalent to \eqref{eqn:fully} with
$$  (p_1(\lambda),\ldots,p_m(\lambda))= (b_1, \ldots, b_m) (I+\lambda A)^{-1},\qquad
 \sigma(\lambda) = 1-\lambda \sum_{j=1}^m b_j p_j(\lambda). $$
Then the Assumption (P5), and \eqref{eqn:accuracy}-\eqref{eqn:stage-order}
imply Assumptions (P1), (P2) with order $k=\min(p,m+1)$ and (P3) with order $q=\min(p,m+1)$.
Hence Theorem \ref{thm:fully-error} indicates the temporal error $O(\tau^{\min(p,m+1)})$.
This is the reason why we choose $k$-step extrapolation, where $k=\min(p,m+1)$, in the time stepping scheme \eqref{eqn:RK-fully}.
\end{remark}

Next, we shall derive an error estimate for the fully discrete scheme \eqref{eqn:RK-fully}-\eqref{eqn:RK-cut}.
To begin with, we shall examine the local truncation error. We define the local truncation error $\eta_{ni}$ and $\eta_{n+1}$  as
\begin{equation}\label{eqn:RK-fully-trunc}
    \begin{cases}
    \dot u_*^{ni} = \Delta  u(t_{ni}) +
    \sum_{\ell=1}^k L_{\ell}(t_{ni}) f(u(t_{n-\ell}))&\quad \text{for} ~~i=1,2,\dots,m, \\ 
    u(t_{ni})=  u(t_{n-1}) + \tau \sum_{j=1}^m a_{ij} \dot u_*^{nj} + \eta_{ni}&\quad \text{for} ~~i=1,2,\dots,m, \\
    u(t_n) = u(t_{n-1}) + \tau \sum_{i=1}^m b_i \dot u_*^{ni} + \eta_{n}
  \end{cases}
\end{equation}
where $t_{ni}=t_{n-1}+c_i\tau$ and $k=\min(p,q+1)$.
Then the next lemma give an estimate for the local truncation error $\eta_{ni}$ and $\eta_{n}$.
We sketch the proof in Appendix for completeness.

\begin{lemma}\label{lem:RK-local-truncation}
Suppose that the Assumption (P5), and relations \eqref{eqn:accuracy} and \eqref{eqn:stage-order}
are valid. Then the local truncation error $\eta_{ni}$ and $\eta_{n}$, given by \eqref{eqn:RK-fully-trunc},
satisfy the estimate
\begin{equation*}
\| \eta_n \|_{H^1\II}  + \tau \sum_{i=1}^m  \| \eta_{ni} \|_{H^1\II}   \le C \tau^{k+1}.
\end{equation*}
with $k=\min(p,q+1)$, provided that $u\in C^{k+1}([0,T]; H^1\II)$ and $f(u) \in C^k([0,T]; H^1\II)$.
\end{lemma}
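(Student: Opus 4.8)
The plan is to analyze the two defining relations in \eqref{eqn:RK-fully-trunc} separately, treating the stage truncation errors $\eta_{ni}$ and the step truncation error $\eta_n$ via Taylor expansion in time around $t_{n-1}$, and exploiting the collocation identities \eqref{eqn:accuracy}--\eqref{eqn:stage-order}. First I would insert the Taylor expansions of $u(t_{n-1}+c_i\tau)$ and of $\dot u_*^{nj} = (\partial_t u)(t_{nj})$ (using the PDE to write $\partial_t u = \Delta u + f(u)$, so that $\dot u_*^{nj}$ genuinely equals $(\partial_t u)(t_{nj})$ up to the extrapolation error of $f(u)$) into the stage equation. The polynomial parts cancel identically because \eqref{eqn:stage-order} holds for $l=1,\dots,m$, which is exactly the condition that the collocation quadrature is exact on the relevant monomials; this leaves an integral remainder of order $\tau^{m+1}$ for each $\eta_{ni}$, hence $\tau\sum_i\|\eta_{ni}\|_{H^1} \le C\tau^{m+2}$, which is stronger than what is claimed. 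For $\eta_n$, the same Taylor expansion together with \eqref{eqn:accuracy} (exactness up to order $p$) gives a remainder of order $\tau^{p+1}$. One then also has to account for the error committed by replacing $f(u(t_{ni}))$ with its $k$-point Lagrange extrapolant $\sum_{\ell=1}^k L_\ell(t_{ni}) f(u(t_{n-\ell}))$: since the extrapolation nodes $t_{n-1},\dots,t_{n-k}$ lie within an $O(\tau)$ window and $f(u)\in C^k([0,T];H^1)$, the standard Lagrange interpolation error bound gives an $O(\tau^k)$ pointwise (in $H^1$) error in each $\dot u_*^{ni}$, which propagates to an $O(\tau^{k+1})$ contribution in both $\eta_{ni}$ (after the extra factor $\tau$ in $\tau\sum_i a_{ij}\dot u_*^{nj}$) and $\eta_n$. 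Combining, the dominant term in both $\|\eta_n\|_{H^1}$ and $\tau\sum_i\|\eta_{ni}\|_{H^1}$ is $O(\tau^{k+1})$ with $k=\min(p,q+1)$; note $k\le m+1$ is used to ensure $\tau^{m+2}$ and $\tau^{p+1}$ are both dominated by $\tau^{k+1}$.

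The key steps in order: (i) rewrite $\dot u_*^{ni}$ using the PDE as $(\partial_t u)(t_{ni})$ plus the Lagrange-extrapolation defect of $f(u)$, and bound that defect by $C\tau^k\|f(u)\|_{C^k([t_{n-k},t_n];H^1)}$; (ii) Taylor-expand $u$ and $\partial_t u$ about $t_{n-1}$ to order $m$ (for the stage relation) and to order $p$ (for the step relation), with integral remainders; (iii) invoke \eqref{eqn:stage-order} to kill the polynomial part of the stage relation, yielding $\|\eta_{ni}\|_{H^1}\le C\tau^{m+1}(1+\|u\|_{C^{m+1}([t_{n-1},t_n];H^1)}) + C\tau^{k}\cdot\tau\cdot(\text{sum of }\|a_{ij}\|)\|f(u)\|_{C^k}$; (iv) invoke \eqref{eqn:accuracy} similarly for $\eta_n$, yielding $\|\eta_n\|_{H^1}\le C\tau^{p+1}\|u\|_{C^{p+1}} + C\tau^{k+1}\|f(u)\|_{C^k}$; (v) collect, use $k=\min(p,q+1)\le m+1$ so that $\tau^{m+2}\le C\tau^{k+1}$ and $\tau^{p+1}\le C\tau^{k+1}$, and conclude.

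The routine but slightly delicate bookkeeping step is (iii)--(iv): one must be careful that the regularity hypotheses $u\in C^{k+1}([0,T];H^1)$ and $f(u)\in C^k([0,T];H^1)$ are exactly enough to write all the needed Taylor remainders, since the step relation nominally wants $u\in C^{p+1}$ but the $k$-step extrapolation caps the usable order at $k=\min(p,q+1)$, so only $C^{k+1}$ regularity is actually exploited. The main obstacle, such as it is, is organizing the cancellation in (iii) cleanly: one has to match the monomial $t\mapsto (t-t_{n-1})^{l-1}$ appearing in the Taylor expansion of $\partial_t u$ against the collocation identity $\sum_j a_{ij} c_j^{l-1} = c_i^l/l$, which is precisely the statement that $\tau\sum_j a_{ij}\dot u_*^{nj}$ reproduces $\int_{t_{n-1}}^{t_{ni}}\partial_t u\,ds$ up to order $\tau^{m+1}$; making this identification rigorous with the integral form of the remainder is the heart of the argument, and we defer the full computation to the Appendix as indicated.
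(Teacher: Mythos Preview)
Your proposal is correct and follows essentially the same route as the paper's own proof in the Appendix: rewrite $\dot u_*^{ni}$ as $\partial_t u(t_{ni})$ plus the Lagrange-extrapolation defect of $f(u)$, Taylor-expand about $t_{n-1}$, use \eqref{eqn:stage-order} to cancel the polynomial part in the stage relation (giving an $O(\tau^{m+1})$ remainder for each $\eta_{ni}$) and \eqref{eqn:accuracy} for the step relation (giving an $O(\tau^{p+1})$ remainder for $\eta_n$), then combine with the $O(\tau^{k+1})$ contribution from the extrapolation defect and use $k=\min(p,m+1)$. Your remark that one only needs to expand to order $k$ (rather than to order $p$ or $m$), so that the hypothesis $u\in C^{k+1}([0,T];H^1)$ genuinely suffices, is a correct and slightly cleaner bookkeeping observation than what is written in the paper's Appendix, but the underlying argument is the same.
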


Then we are ready to present the following theorem, which gives the error estimate for the
cut-off Runge--Kutta scheme \eqref{eqn:RK-fully}-\eqref{eqn:RK-cut}.

\begin{theorem}\label{thm:RK-error}
Suppose that the Runge--Kutta method given by Table \ref{tab:RK} satisfies Assumption (P5),
and relations \eqref{eqn:accuracy} and \eqref{eqn:stage-order}  are valid.
Assume that $|u_0| \le \alpha$ and the maximum principle \eqref{eqn:max-AC} holds,
 and assume that the starting values $u_h^n$, $l=0,\dots,k-1$, are given and
$$
|u_h^l(x_j)|\le \alpha , \quad j=0,\dots,Mr,\quad l=0,\dots,k-1.
$$
Then the   {fully discrete} solution given by \eqref{eqn:RK-fully}-\eqref{eqn:RK-cut} satisfies
\begin{align*}
|u_h^n(x_j)|\le \alpha , \quad j=0,\dots,Mr,\quad n=k,\dots,N,
\end{align*}
and  {for $n=k,\ldots,N$}
\begin{align*}
  \| u(t_n) - u_h^n \|_{L^2(\Omega)} \le  C(\tau^k + h^{r+1}) + C\sum_{l=0}^{k-1}\| u(t_l) - u_h^l \|_{L^2(\Omega)} ,
\end{align*}
provided that {$u\in C^{k+1}([0,T]; H^1\II) \cap C^1([0,T];H^{2r+2}(\Omega))$,
$f$ is locally Lipschitz continuous and $f(u)\in C^k([0,T];H^1\II)\cap C([0,T];H^{2r+2}\II)$.}
\end{theorem}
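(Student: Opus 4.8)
The plan is to mimic the structure of the proof of Theorem \ref{thm:fully-error}, but to replace the spectral argument based on L-stability (Assumption (P4)) with an energy argument that exploits the algebraic stability (Assumption (P5)). As before, the maximum bound $|u_h^n(x_j)|\le\alpha$ is immediate from the cut-off step \eqref{eqn:RK-cut}, given that the starting values satisfy it, so the work is entirely in the error estimate. First I would introduce the Ritz--type comparison quantities: let $\Pi_h u$ be the interpolant, let $w_h^{ni}, w_h^n$ solve the Runge--Kutta system \eqref{eqn:RK-fully} but driven by the exact nonlinearity $\Pi_h f(u(t_{n-\ell}))$ and with the consistency defect $g_h(t) = (\Pi_h\Delta - \Delta_h\Pi_h)u(t)$ inserted, so that $\Pi_h u(t_{ni})$ and $\Pi_h u(t_n)$ satisfy the same relations up to the local truncation errors $\eta_{ni},\eta_n$ from Lemma \ref{lem:RK-local-truncation}. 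Then split $\hat u_h^n - \Pi_h u(t_n) = (\hat u_h^n - w_h^n) + (w_h^n - \Pi_h u(t_n))$; the second piece is $O(\tau^{k+1}) + O(\tau h^{r+1})$ by combining Lemma \ref{lem:RK-local-truncation} with Lemma \ref{lem:truncation} applied to $g_h$, exactly as in \eqref{eqn:fully-I2}. Set $\rho_h^{ni} = \hat u_h^{ni}-w_h^{ni}$ (internal stages), $\rho_h^n = \hat u_h^n - w_h^n$, and $e_h^{n} = u_h^n - \Pi_h u(t_n)$; by the cut-off nonexpansivity, $\|e_h^n\|_h \le \|\hat u_h^n - \Pi_h u(t_n)\|_h$.

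The heart of the argument is the energy identity for the Runge--Kutta increment. Writing $\dot\rho_h^{ni}$ for the difference of the stage derivatives, we have $\rho_h^{ni} = \rho_h^{n-1} + \tau\sum_j a_{ij}\dot\rho_h^{nj} - \eta_{ni}$ and $\rho_h^n = \rho_h^{n-1} + \tau\sum_i b_i \dot\rho_h^{ni} - \eta_n$, where $\rho_h^{n-1} = e_h^{n-1}$ (with $\rho_h^0$ the starting error), and the stage equation reads $\dot\rho_h^{ni} = \Delta_h\rho_h^{ni} + \delta_h^{ni}$ with $\delta_h^{ni}$ collecting the extrapolation/Lipschitz error of $f$ and the quadrature defect $g_h(t_{ni})$. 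The standard manipulation (see e.g.\ \cite[Ch.\ IV.12]{HairerWanner:2010}) gives
\begin{equation*}
\|\rho_h^n\|_h^2 \le \|e_h^{n-1}\|_h^2 + 2\tau\sum_{i=1}^m b_i\,(\dot\rho_h^{ni},\rho_h^{ni})_h + (\text{terms from }\eta_n,\eta_{ni}),
\end{equation*}
where the crucial point is that the quadratic form $-2\tau\sum_i b_i\langle\Delta_h\rho_h^{ni},\rho_h^{ni}\rangle$ contributes $-2\tau\sum_i b_i\|\nabla\rho_h^{ni}\|^2$ (good, dissipative, by positivity of the $b_i$ from (P5)(b) and the definition \eqref{eqn:disc-lap} of $\Delta_h$), and the cross terms organize, via the matrix $\mathcal M$ with entries $m_{ij}=b_ia_{ij}+b_ja_{ji}-b_ib_j$, into $-\tau^2\sum_{ij}m_{ij}(\dot\rho_h^{ni},\dot\rho_h^{nj})_h \le 0$ by (P5)(c). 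Hence, up to the consistency terms, the scheme is contractive in $\|\cdot\|_h$ and additionally produces the negative dissipation $-2\tau\sum_i b_i\|\nabla\rho_h^{ni}\|^2$, which plays precisely the role that $-2c_0\tau\|\nabla\sigma(-\tau\Delta_h)v_h\|^2$ played in Theorem \ref{thm:fully-error}.

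With that dissipation in hand, I would estimate the forcing contributions. The $f$-part of $\delta_h^{ni}$ is bounded, using the local Lipschitz continuity of $f$, the maximum bound of $u_h^{n-\ell}$ and $u(t_{n-\ell})$, Lemma \ref{lem:stable-fully} (with $q=0,1$) applied to $(I+\tau A\cdot(-\Delta_h))^{-1}$-type operators, and the approximation property of the Lagrange extrapolation, by $C\tau^{k+1}\|f(u)\|_{C^k([t_{n-k},t_n];C(\bar\Omega))} + C\tau\sum_{\ell=1}^k\|e_h^{n-\ell}\|_h$; the pairing of this with $\rho_h^{ni}$ or $\nabla\rho_h^{ni}$ is absorbed into $\varepsilon\tau\|\nabla\rho_h^{ni}\|^2 + \varepsilon\tau\|\rho_h^{ni}\|_h^2$ using the $\nabla$-dissipation, while the $g_h$-part is handled by Lemma \ref{lem:truncation} exactly as in \eqref{eqn:fully-I2}, giving $C\tau h^{2r+2}/\eta + C\tau\eta\|\nabla\rho_h^{ni}\|^2$. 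Summing over $i$, choosing $\eta,\varepsilon$ small relative to $\min_i b_i$, and using $\|\rho_h^{ni}\|_h \le C(\|e_h^{n-1}\|_h + \tau\|\Delta_h\rho_h^{ni}\|_h + \cdots)$ — here I would instead control the internal stages directly from the stage equations and the invertibility of $A$ (P5)(a), writing $(\dot\rho_h^{n1},\ldots,\dot\rho_h^{nm})$ in terms of $(\rho_h^{n1},\ldots,\rho_h^{nm})$ — I arrive at
\begin{equation*}
\|e_h^n\|_h^2 \le \|e_h^{n-1}\|_h^2 + C\tau\sum_{\ell=1}^k\|e_h^{n-\ell}\|_h^2 + C\tau(\tau^{2k}+h^{2r+2}),
\end{equation*}
and the discrete Gronwall inequality together with the norm equivalence of Lemma \ref{lem:norm-equivalence} finishes the proof, with the starting-error term $C\sum_{l=0}^{k-1}\|u(t_l)-u_h^l\|_{L^2}$ carried along.

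The main obstacle I anticipate is the bookkeeping for the internal stages: unlike the single-step reformulation in Remark \ref{rem:equiv}, here I must keep the stages $\rho_h^{ni}$ explicit in order to invoke (P5)(c), and I must show that the $\nabla$-dissipation harvested from the stage Laplacians is genuinely enough to dominate \emph{all} the $H^1$-type forcing terms (those from $g_h$ and from $\Delta_h p_i$-type operators acting on $\rho_h^n$), after eliminating $\dot\rho_h^{ni}$ via $A^{-1}$ and bounding $\|\Delta_h\rho_h^{ni}\|_h$ in terms of $\|\nabla\rho_h^{ni}\|$ and lower-order data — the matrix constants from $A^{-1}$, $b_i$, and $\mathcal M$ all enter here and must be tracked carefully to be sure the net coefficient of every dangerous term is nonpositive or $O(\tau)$-absorbable. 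A secondary subtlety, exactly as flagged in Remark \ref{remark:p3-2}, is that the cut-off is applied at the end of each step but not to the internal stages, so the contraction must be proved at the level of $\hat u_h^n$ before the cut-off is invoked; this is why I compare against $w_h^n$ rather than trying to propagate a cut solution through the stages.
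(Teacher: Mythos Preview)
Your overall strategy --- replace L-stability by the algebraic-stability energy identity, harvest the dissipation $-\tau\sum_i b_i\|\nabla(\cdot)^{ni}\|^2$, use Lemma~\ref{lem:truncation} for the $g_h$-defect, then Gronwall --- is exactly what the paper does. Two differences are worth flagging.

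First, the intermediate $w_h$ is unnecessary. The paper subtracts the scheme \eqref{eqn:RK-fully} directly from the truncation relations \eqref{eqn:RK-fully-trunc} (after applying $\Pi_h$), obtaining a single error system for $e_h^{ni}=\Pi_h u(t_{ni})-u_h^{ni}$ in which the truncation $\eta_{ni}$, the Lipschitz term $\Pi_h(f(u(t_{n-\ell}))-f(u_h^{n-\ell}))$, and the defect $(\Pi_h\Delta-\Delta_h\Pi_h)u$ all appear at once; one energy argument then handles everything. Your split forces you to run essentially the same energy argument twice (once for $w_h^n-\Pi_h u(t_n)$, once for $\rho_h^n$), and your own description already shows some confusion about which piece $g_h$ lives in.

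Second, and more substantively, your plan for the internal stages --- ``eliminating $\dot\rho_h^{ni}$ via $A^{-1}$ and bounding $\|\Delta_h\rho_h^{ni}\|_h$ in terms of $\|\nabla\rho_h^{ni}\|$'' --- will not work as stated: $\|\Delta_h v_h\|_h$ is genuinely one derivative stronger than $\|\nabla v_h\|$ and cannot be controlled by it. The paper sidesteps this cleanly by \emph{testing} the stage relation $e_h^{ni}=e_h^{n-1}+\tau\sum_j a_{ij}\dot e_h^{nj}+\Pi_h\eta_{ni}$ with $e_h^{ni}$ and then substituting the first error equation for $\dot e_h^{nj}$; this produces only $(\nabla e_h^{nj},\nabla e_h^{ni})$ terms (integration by parts), giving
\[
\sum_{i=1}^m\|e_h^{ni}\|_h^2 \le C\|e_h^{n-1}\|_h^2 + C\tau\sum_{i=1}^m\|\nabla e_h^{ni}\|^2 + \text{lower order},
\]
which for small $\tau$ is absorbed by the harvested dissipation. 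That is precisely the bookkeeping you flagged as the main obstacle, and this testing trick resolves it without ever touching $\|\Delta_h e_h^{ni}\|_h$.
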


\begin{proof}
Due to the cut-off operation \eqref{eqn:semi-cut}, the discrete
maximum bound principle follows immediately.
With the notation
\begin{align*}
  e_h^{ni} = \Pi_h u(t_{ni}) - u_h^{ni},\quad  \dot e_h^{ni} = \Pi_h \dot u_*^{ni} - \dot u_h^{ni}, \quad
  e_h^{n} = \Pi_h u(t_n) - u_h^{n},  \quad  \hat e_h^{n} = \Pi_h u(t_n) - \hat u_h^{n},
\end{align*}
we derive the error equations
\begin{equation}\label{eqn:RK-error-eq}
    \begin{cases}
   \dot e_h^{ni} =  \Delta_h e_h^{ni}  +  (\Pi_h \Delta- \Delta_h \Pi_h) u(t_{ni}) +
    \sum_{\ell=1}^k L_{\ell}(t_{ni}) \Pi_h (f(u(t_{n-\ell})) - f(u_h^{n-\ell}))&\quad \text{for} ~~i=1,2,\dots,m, \\ 
    e_h^{ni}=  e_h^{n-1} + \tau \sum_{j=1}^m a_{ij} \dot e_h^{nj} + \Pi_h \eta_{ni}&\quad \text{for} ~~i=1,2,\dots,m, \\
    \hat e_h^n = e_h^{n-1} + \tau \sum_{i=1}^m b_i \dot e_h^{ni} + \Pi_h\eta_{n} .
  \end{cases}
\end{equation}
Take the square of discrete $L^2$ norm of both sides of the last relation of  \eqref{eqn:RK-error-eq}, we obtain
\begin{equation}\label{eqn:rk-split-01}
\begin{split}
\|\hat e_h^{n}\|_h^2 & = \|e_h^{n-1}+\tau\sum_{i=1}^{m} b_i\dot{e}_h^{ni}\|_h^2
+ 2(\eta_{n}, e_h^{n-1}+ \tau\sum_{i=1}^{m} b_i\dot{e}_h^{ni})_h + \|\Pi_h \eta_{n}\|_h^2.
\end{split}
\end{equation}
For the first term on the right hand side,
we expand it and apply the second equation of \eqref{eqn:RK-error-eq}  to obtain
\begin{align*}
\|e_h^{n-1}+\tau\sum_{i=1}^{m} b_i\dot{e}_h^{ni}\|_h^2
&= \|e_h^{n-1}\|_h^2 + 2\tau\sum_{i=1}^{m}b_i(\dot{e}_h^{ni},e_h^{ni}-\eta_{ni})_h - \tau^2\sum_{i,j=1}^{m} m_{ij}(\dot{e}_h^{ni}, \dot{e}_h^{nj})_h\\
&\leq \|e_h^{n-1}\|_h^2 + 2\tau\sum_{i=1}^{m}b_i(\dot{e}_h^{ni},e_h^{ni}-\eta_{ni})_h,
\end{align*}
where in the last inequality we use the positive semi-definiteness of the matrix $\mathcal{M}$ in the Assumption (P5).
Next, we note that the first relation of \eqref{eqn:RK-error-eq} implies
\begin{equation*}
\begin{split}
(\dot{e}_h^{ni},e_h^{ni}-\eta_{ni})_h
&= \Big(   \Delta_h e_h^{ni} +   \sum_{\ell=1}^k L_{\ell}(t_{ni})   (f(u(t_{n-\ell})) - f(u_h^{n-\ell}))
+ (\Pi_h \Delta - \Delta_h \Pi_h) u(t_{n-1}), e_h^{ni}-\eta_{ni} \Big)_h\\
&= -\|\nabla e_h^{ni}\|_{L^2\II}^2 + (\nabla e_h^{ni} , \nabla \Pi_h \eta_{ni}) +  \Big(\sum_{\ell=1}^k L_{\ell}(t_{ni})   (f(u(t_{n-\ell})) - f(u_h^{n-\ell})), e_h^{ni}-\eta_{ni} \Big)_h  \\
& \quad \\
& \quad +  \Big(  (\Pi_h \Delta - \Delta_h \Pi_h) u(t_{n-1}), e_h^{ni}-\eta_{ni}\Big)_h
\end{split}
\end{equation*}
The bound of second term of the right hand side can be derived via Cauchy-Schwarz inequality
\begin{equation*}
|(\nabla e_h^{ni} , \nabla \Pi_h \eta_{ni})| \le \frac14 \|  \nabla e_h^{ni} \|_{L^2\II}^2 + C \| \eta_{ni}\|_{H^1\II}^2.
\end{equation*}
Meanwhile, using the fact that $f$ is locally Lipschitz and the fully disctete solutions
satisfy maximum bound principle at the Gauss--Lobatto points, the third term can be bounded as
\begin{equation*}
\begin{split}
\Big(\sum_{\ell=1}^k L_{\ell}(t_{ni})   (f(u(t_{n-\ell})) - f(u_h^{n-\ell})), e_h^{ni}-\eta_{ni}\Big)_h
 \le C \Big( \|  e^{ni}_h \|_h^2 + \|\eta_{ni} \|_{H^1\II}^2 + \sum_{\ell=1}^k \| e_h^{n-\ell} \|_h^2 \Big)
\end{split}
\end{equation*}
The bound of the last term follows from Lemma \ref{lem:truncation}
\begin{equation*}
\begin{split}
 \Big(  (\Pi_h \Delta - \Delta_h \Pi_h) u(t_{n-1}), e_h^{ni}- \eta_{ni} \Big)_h &\le C h^{r+1}  \| e_h^{ni}- \Pi_h  \eta_{ni}   \|_{H^1\II} \\
 &\le  \frac14\| \nabla e_h^{ni}  \|_{L^2\II}^2 + C(\|  e_h^{ni} \|_h^2 + \|  \eta_{ni} \|_{H^1\II}^2+ h^{2r+2} ).
\end{split}
\end{equation*}
Therefore, we arrive at
\begin{equation*}
2(\dot{e}_h^{ni},e_h^{ni}-\eta_{ni})_h \le  -\|\nabla e_h^{ni}\|_{L^2\II}^2 +
 C \Big(  \sum_{j=1}^k \| e_h^{n-j} \|_h^2 + \|  e_h^{ni} \|_h^2  + \| \eta_{ni}\|_{H^1\II}^2 + h^{2r+2} \Big),
\end{equation*}
and hence by Lemma \ref{lem:RK-local-truncation}, we derive
\begin{align*}
\|e_h^{n-1}+\tau\sum_{i=1}^{m} b_i\dot{e}_h^{ni}\|_h^2
&\leq \|e_h^{n-1}\|_h^2 - \tau\sum_{i=1}^{m}b_i \| \nabla e_h^{ni}  \|_{L^2\II}^2 + C\tau \sum_{i=1}^m  \|  e_h^{ni} \|_h^2 \\
&\qquad + C\tau \sum_{j=1}^k \| e_h^{n-j} \|_h^2 + C \tau(h^{2r+2}+\tau^{2k}).
\end{align*}
In view of the first relation of the error equation \eqref{eqn:RK-error-eq}, we have the estimate
\begin{align*}
(\eta_{n}, e_h^{n-1}+ \tau\sum_{i=1}^{m} b_i\dot{e}_h^{ni})_h
&\le \| \eta_n  \|_{H^1\II} \Big( \|e_h^{n-1}\|_h + C \tau  \sum_{i=1}^m b_i
\Big(\| \nabla e_h^{ni} \|_h + \sum_{j=1}^{k} \|  e_h^{n-j} \|_h  + h^{2r+2}\Big)\Big)\\
&\le C\tau(h^{2r+2}+ \tau^{2k})   + \frac{\tau}{4}   \sum_{i=1}^{m} b_i  \| \nabla e_h^{ni} \|_h^2
+ C\tau  \sum_{j=1}^{k} \|  e_h^{n-j} \|_h^2
\end{align*}
which gives a bound of the second term in \eqref{eqn:rk-split-01}. In conclusion, we obtain that
\begin{equation}\label{eqn:rk-est-02}
\begin{split}
 \|\hat e_h^{n}\|_h^2 + \frac\tau2 \sum_{i=1}^m \| \nabla e_h^{ni} \|_{L^2\II}^2& \le  C\tau(h^4+ \tau^{2k}) +  \|e_h^{n-1}\|_h^2
 + C \tau   \sum_{i=1}^m \|  e_h^{ni} \|_h^2 + C\tau  \sum_{j=1}^{k} \|  e_h^{n-j} \|_h^2.
\end{split}
\end{equation}
Next, we shall derive a bound for  $ \sum_{i=1}^m \|  e_h^{ni} \|_h^2 $
 on the right-hand side. To this
end, we test the second relation of  \eqref{eqn:RK-error-eq} by $e_h^{ni}$. This yields
\begin{equation*}
\begin{split}
 \sum_{i=1}^m \|e_h^{ni}\|_h^2 & \le C \|  e_h^{n-1} \|_h^2 + C\tau\sum_{i,j=1}^m a_{ij}(\dot e_h^{nj},e_h^{ni})_h + C\sum_{i=1}^m \| \Pi_h \eta_{ni} \|_h^2\\
& \le  C \|  e_h^{n-1} \|_h^2 + C\tau\sum_{i,j=1}^m a_{ij}(\dot e_h^{nj},e_h^{ni}) _h + C\tau^{2k}.
\end{split}
\end{equation*}
Then, we apply the first relation of  \eqref{eqn:RK-error-eq} and Lemma \ref{lem:truncation} to derive
\begin{equation*}
\begin{split}
 \sum_{i,j=1}^m a_{ij}(\dot e_h^{nj},e_h^{ni})_h &= - \sum_{i,j=1}^m a_{ij}(\nabla e_h^{nj}, \nabla e_h^{ni})
 + \sum_{i,j=1}^m a_{ij}   \Big( \sum_{\ell=1}^k L_{\ell}(t_{ni})   (f(u(t_{n-\ell})) - f(u_h^{n-\ell})) , e_h^{ni}\Big)_h\\
&\quad + \sum_{i,j=1}^m a_{ij}  ((\Pi_h \Delta - \Delta_h \Pi_h) u(t_{n-1}),  e_h^{ni} )_h \\
&\le C\sum_{i=1}^m \Big(\| \nabla e_h^{ni}  \|_{L^2\II}^2 + \|  e_h^{ni}  \|_{h}^2  \Big)+ Ch^{2r+2} + C\sum_{\ell=1}^k \| e_h^{n-\ell} \|_h^2.
\end{split}
\end{equation*}
Therefore, we obtain
\begin{equation*}
\begin{split}
 \sum_{i=1}^m \|e_h^{ni}\|_h^2   \le  C( \tau h^{2r+2}+\tau^{2k}) + C\|  e_h^{n-1} \|_h^2 + C\tau \sum_{\ell=1}^k \| e_h^{n-\ell} \|_h^2 +
 C\tau \sum_{i=1}^m \Big(\| \nabla e_h^{ni}  \|_{L^2\II}^2 + \|  e_h^{ni}  \|_{h}^2   \Big).
\end{split}
\end{equation*}
Then for sufficiently small $\tau$,
$ C\tau \sum_{i=1}^m \|e_h^{ni}\|_h^2   $
on the right-hand side can be absorbed by the left-hand side. Then, we obtain
\begin{equation*}
\begin{split}
 \sum_{i=1}^m \|e_h^{ni}\|_h^2   \le  C( \tau h^{2r+2}+\tau^{2k}) + C\|  e_h^{n-1} \|_h^2 + C\tau \sum_{\ell=1}^k \| e_h^{n-\ell} \|_h^2 +
 C\tau \sum_{i=1}^m \| \nabla e_h^{ni}  \|_{L^2\II}^2 .
\end{split}
\end{equation*}
Now substituting the above estimate into \eqref{eqn:rk-est-02}, there holds for sufficiently small $\tau$
\begin{equation*}
\begin{split}
 \|\hat e_h^{n}\|_h^2
 \le&  C\tau(h^{2r+2}+ \tau^{2k}) +  \|e_h^{n-1}\|_h^2   + C\tau  \sum_{\ell=1}^{k} \|  e_h^{n-\ell} \|_h^2.
\end{split}
\end{equation*}
Noting that  $\|e_h^{n}\|_h \le  \|\hat e_h^{n}\|_h$ and rearranging terms, we obtain
\begin{equation*}
\begin{split}
 \frac{\|e_h^{n}\|_h^2 - \|e_h^{n-1}\|_h^2}{\tau}
 \le&  C (h^{2r+2}+ \tau^{2k})   + C \sum_{\ell=1}^{k} \|  e_h^{n-\ell} \|_h^2.
\end{split}
\end{equation*}
Then the discrete Gronwall's inequality implies
\begin{equation*}
\begin{split}
\max_{k \le n\le N}  \|e_h^{n}\|_h^2
 \le&  C (h^{2r+2}+ \tau^{2k})  +  C \sum_{j=0}^{k-1} \|  e_h^{j} \|_h^2.
\end{split}
\end{equation*}
This completes the proof of the theorem.
\end{proof}

\begin{remark}\label{rem:RK}
In Theroem \ref{thm:RK-error}, we discuss the algebraically stable collocation-type method with cut-off technique.
We still prove the optiaml error estimate $O(\tau^k + h^{r+1})$, without the L-stability, i.e. Assumption (P4).
Note that this class of methods includes Gauss--Legendre and Radau IIA methods \cite[Theorem 12.9]{HairerWanner:2010},
while the first one is not L-stable \cite[Table 5.13]{HairerWanner:2010}.
\end{remark}

\section{Fully discrete scheme based on SAV method}\label{sec:sav}
In the preceding section, we develop and analyze a class of maximum bound preserving schemes.
Unfortunately,  the proposed scheme (with relatively large time steps) might produce solutions
with increasing and oscillating energy, see Figure \ref{fig:energy}.
This violates another essential property of the Allen--Cahn model, say energy dissipation.
The aim for this section is to develop
a high-order time stepping schemes via combining the cut-off strategy and
the scalar auxiliary variable (SAV) method.

SAV method is a common-used method for  gradient flow models. It was firstly developed in
\cite{shen2018scalar,shen2019new} and have motived a sequence of interesting work on the
development and analysis of high-order energy-decayed time stepping scheme
in recent years \cite{akrivis2019energy,ShenXu:2018,Gong:2020}.

In particular, assuming that $E_1(u(t)) = \int_{\Omega}F(u(x,t))\d x$  is globally bounded from below,
i.e., $E_1(u(t)) > -C_0$.
we introduce the following scalar auxiliary variable \cite{shen2018scalar}
\begin{equation}\label{eq:scalar}
 z(t) = \sqrt{E_1(u(t)) + C_0}\quad \text{and} \quad W(u) = \dfrac{f(u)}{\sqrt{E_1(u) + C_0}}
\end{equation}
Then the Allen--Cahn equation in \eqref{eqn:AC} can be reformulated as
\begin{equation}\label{eqn:AC-sav}
    \begin{cases}
    u_t = \Delta u + z(t) W(u) &
        \mbox{in~~} \Omega\times(0,T),\\
    u(x,t=0) = u_0(x)   &
        \mbox{in~~} \Omega\times\{0\},\\
    \partial_{\mathbf{n}}u = 0  &
        \mbox{on~~} \partial\Omega\times(0,T)\\
  \end{cases}
\end{equation}
and the scalar auxiliary variable $r(t)$ satisfies
\begin{equation}\label{eqn:r}
\begin{cases}
    z'(t) = -\dfrac12(W(u(t)), u_t(t)),&\qquad \mbox{in~~} (0,T),\\
   z(0) = \sqrt{E_1(u^0) + C_0}.&
 \end{cases}
\end{equation}

One can easily show that the coupled problem \eqref{eqn:AC-sav}-\eqref{eqn:r} is equivalent to the original
equation \eqref{eqn:AC}.  Meanwhile, simple calculation leads to the SAV energy dissipation:
\begin{equation}\label{eq:SAV_energy}
  \frac{\d}{\d t} \Big(\frac12\|\nabla u\|^2 + |z(t)|^2 \Big)=-\|u_t(t)\|^2 \le 0.
\end{equation}

Inspired by \cite{akrivis2019energy},
we discretize the coupled problem \eqref{eqn:AC-sav}-\eqref{eqn:r}
by using the  $m$-stage Runge--Kutta method in time (described by Table \ref{tab:RK}) and  lumped mass
finite element method with $r=1$ in space discretization. Then the cut-off operation is applied in each time level
to remove the value violating the maximum bound principle (at nodal points).
For simplicity, we only present the argument for one-dimensional case,
and it can be extended to multi-dimensional cases straightforwardly as mentioned in Remark \ref{rem:highdimension}.

Here we assume that the  $m$-stage Runge--Kutta method (described by Table \ref{tab:RK}) satisfies
the Assumption (P5) and relations \eqref{eqn:accuracy} and \eqref{eqn:stage-order}. Then at $n$-th time level,
with known $u_h^{n-k}, \ldots, u_h^{n-1} \in S_h^r$ and $z^{n-1}\in \mathbb{R}$,
we find $\hat u_h^n \in S_h^r$ and $z^n\in \mathbb{R}$ such that
\begin{equation}\label{eqn:AC-sav-fully}
    \begin{cases}
    \dot u_h^{ni} = \Delta_h u_h^{ni} +  z^{ni} W_h^{ni}&\quad \text{for} ~~i=1,2,\dots,m, \\ 
    u_h^{ni} = u_h^{n-1} + \tau \sum_{j=1}^m a_{ij} \dot u_h^{nj} &\quad \text{for} ~~i=1,2,\dots,m, \\
    \hat u_h^{n} = u_h^{n-1} + \tau \sum_{i=1}^m b_i \dot u_h^{ni},
  \end{cases}
\end{equation}
and
\begin{equation}\label{eqn:r-fully}
\begin{cases}
    \dot z^{ni} = - \dfrac12(W_h^{ni},   \dot u_h^{ni})_h &\quad \text{for} ~~i=1,2,\dots,m, \\
    z^{ni} = z^{n-1} + \tau \sum_{j=1}^m a_{ij}  \dot z^{nj}&\quad \text{for} ~~i=1,2,\dots,m, \\
   z^{n} = z^{n-1} +  \tau \sum_{i=1}^m b_i  \dot z^{ni},
 \end{cases}
\end{equation}
where $\Pi_h:C(\overline\Omega)\rightarrow S_h^r$ is the Lagrange interpolation operator, and the linearized term $W^{ni}$ is  defined by
$$ W_h^{ni} =  \sum_{\ell=1}^k L_{\ell}(t_{n-1}+c_i\tau)\Pi_h W(u_h^{n-j}),\quad \text{with} ~~k=\min(p,m+1). $$
Then we apply the cut-off operation: find $u_h^n \in S_h^r$ such that
\begin{equation}\label{eqn:truncation-fully-sav}
\begin{split}
u_h^n(x_j)  = \min\big(\max \big(\hat u_h^n(x_j), - \alpha \big) , \alpha \big),  \quad j=0,\dots,Mr.
\end{split}
\end{equation}

\begin{lemma}\label{lem:cutoff-energy}
For $r=1$, the cut-off operation \eqref{eqn:truncation-fully-sav} indicates
\begin{align}
  \| \nabla  u_h^{n}\|_{L^2\II} \le  \| \nabla  \hat u_h^{n}\|_{L^2\II}.\label{grd-decay}
\end{align}
\end{lemma}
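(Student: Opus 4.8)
The statement to prove is that for piecewise linear finite elements ($r=1$), the nodal cut-off operation does not increase the Dirichlet energy: $\|\nabla u_h^n\|_{L^2(\Omega)} \le \|\nabla \hat u_h^n\|_{L^2(\Omega)}$. The plan is to exploit the fact that on each subinterval $I_i$, both $\hat u_h^n$ and $u_h^n$ are affine, so $\nabla u_h^n|_{I_i}$ is just the slope, namely the nodal difference over $h$. Thus the energy is an explicit sum over edges $\sum_i \frac{1}{h}\bigl(\hat u_h^n(x_i) - \hat u_h^n(x_{i-1})\bigr)^2$ (up to the constant $h$), and the claim reduces to a purely algebraic statement about the map $P:\R\to[-\alpha,\alpha]$, $P(s)=\min(\max(s,-\alpha),\alpha)$: for any two reals $s,t$ one has $|P(s)-P(t)|\le |s-t|$, i.e. $P$ is $1$-Lipschitz (indeed a nonexpansive projection onto the interval $[-\alpha,\alpha]$).

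First I would fix the mesh and write, for $r=1$, $\nabla v_h|_{I_i} = (v_h(x_i)-v_h(x_{i-1}))/h$ for any $v_h\in S_h^1$, so that
\begin{align*}
\|\nabla v_h\|_{L^2(\Omega)}^2 = \sum_{i=1}^{M} \frac{1}{h}\bigl(v_h(x_i)-v_h(x_{i-1})\bigr)^2.
\end{align*}
Applying this to $v_h=u_h^n$ and $v_h=\hat u_h^n$, it suffices to show $|u_h^n(x_i)-u_h^n(x_{i-1})| \le |\hat u_h^n(x_i)-\hat u_h^n(x_{i-1})|$ for each edge. By \eqref{eqn:truncation-fully-sav}, $u_h^n(x_j)=P(\hat u_h^n(x_j))$, so this is exactly $|P(a)-P(b)|\le|a-b|$ with $a=\hat u_h^n(x_i)$, $b=\hat u_h^n(x_{i-1})$.

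Then I would verify the nonexpansiveness of $P$. Since $P$ is the composition $s\mapsto\max(s,-\alpha)\mapsto\min(\cdot,\alpha)$, and each of $s\mapsto\max(s,-\alpha)$ and $s\mapsto\min(s,\alpha)$ is monotone nondecreasing and $1$-Lipschitz, so is their composition; alternatively one checks the six orderings of $a,b$ relative to $\pm\alpha$ directly, or simply invokes that $P$ is the metric projection onto the closed convex set $[-\alpha,\alpha]\subset\R$, which is always nonexpansive. Summing the per-edge inequalities and multiplying by $h^{-1}$ yields $\|\nabla u_h^n\|_{L^2(\Omega)}^2\le\|\nabla \hat u_h^n\|_{L^2(\Omega)}^2$, hence \eqref{grd-decay}.

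There is no real obstacle here; the only point requiring a little care is that this argument is genuinely special to $r=1$ — for $r\ge 2$ the gradient on an element is not determined by the two endpoint nodal values alone, and nodal cut-off can increase the element energy — which is precisely why the SAV scheme in this section is restricted to $r=1$. I would state this restriction explicitly when invoking the piecewise-affine structure, and note that in higher dimensions with $P_1$ elements on a simplicial mesh the same proof works verbatim edge by edge, using that $\nabla v_h$ is again constant on each simplex and that the energy is a positive combination of squared nodal differences along edges.
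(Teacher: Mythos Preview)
Your proposal is correct and follows essentially the same approach as the paper: both exploit that for $r=1$ the Dirichlet energy is $h^{-1}\sum_i (v_h(x_i)-v_h(x_{i-1}))^2$ and then reduce to the per-edge inequality $|u_h^n(x_j)-u_h^n(x_{j-1})|\le|\hat u_h^n(x_j)-\hat u_h^n(x_{j-1})|$. The paper simply asserts this last inequality as ``obvious'', whereas you supply the justification via the $1$-Lipschitz property of the projection $P$ onto $[-\alpha,\alpha]$; your additional remarks on why $r=1$ is essential and on the multi-dimensional $P_1$ case are accurate but go beyond what the paper records.
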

\begin{proof}
Since both $\hat u_h^{n}$ and $u_h^{n}$ are piecewise linear, it is easy to see that
$$\| \nabla  u_h^{n}\|_{L^2\II}^2=\frac{1}{h}\sum_{j=1}^M\left|u_h^n(x_j)-u_h^n(x_{j-1})\right|^2,\quad \|\hat u_h^{n}\|_{L^2\II}^2=\frac{1}{h}\sum_{j=1}^M\left|\hat u_h^n(x_j)-\hat u_h^n(x_{j-1})\right|^2.
$$
Obviously, the cut-off operation \eqref{eqn:truncation-fully-sav} derives
$$\left|u_h^n(x_j)-u_h^n(x_{j-1})\right|\le \left|\hat u_h^n(x_j)-\hat u_h^n(x_{j-1})\right|,\quad \text{for}\;j=1,2\cdots,M,$$
which completes the proof.
\end{proof}

The next theorem shows that the cut-off SAV-RK scheme \eqref{eqn:AC-sav-fully}-\eqref{eqn:truncation-fully-sav}
satisfies the energy decay property and discrete maximum bound principle.
\begin{theorem}\label{thm:energy-decay}
Suppose that the Runge--Kutta method in Table \ref{tab:RK} satisfies Assumption (P5),
and we apply the  lumped mass
finite element method with $r=1$ in space discretization.
Then, the time stepping scheme \eqref{eqn:AC-sav-fully}-\eqref{eqn:truncation-fully-sav} satisfies the energy decay property:
\begin{equation}\label{eqn:sav-energy}
\frac12\|  \nabla u_h^n \|_{L^2\II}^2 +  |z^{n}|^2 \le \frac12\|  \nabla u_h^{n-1} \|_{L^2\II}^2 +  |z^{n-1}|^2,\quad \text{for all}~~ n\ge k.
\end{equation}
Meanwhile,  the fully discrete solution \eqref{eqn:AC-sav-fully}-\eqref{eqn:truncation-fully-sav} satisfies the maximum bound principle
\begin{equation}\label{eqn:sav-max}
 \max_{k \le n\le N} | u_h^n (x) | \le \alpha, \quad \text{for all}~~ x\in \Omega.
 \end{equation}
\end{theorem}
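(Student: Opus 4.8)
The plan is to prove the two assertions separately, and in fact the energy decay \eqref{eqn:sav-energy} will be obtained by first establishing it for the pre-cutoff intermediate solution $\hat u_h^n$ and then transferring it to $u_h^n$ via Lemma \ref{lem:cutoff-energy}. The maximum bound \eqref{eqn:sav-max} is essentially free: the cut-off \eqref{eqn:truncation-fully-sav} forces $|u_h^n(x_j)|\le\alpha$ at every node $x_j$, and since $r=1$ the function $u_h^n$ is affine on each element, hence a convex combination of its two endpoint nodal values, so $|u_h^n(x)|\le\alpha$ for all $x\in\Omega$ and all $n\ge k$.

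The core of the argument is the classical algebraic-stability identity for Runge--Kutta methods: for stage values $y^{ni}=y^{n-1}+\tau\sum_j a_{ij}\dot y^{nj}$ and update $y^{n}=y^{n-1}+\tau\sum_i b_i\dot y^{ni}$ in an inner product space, expanding $\|y^n\|^2$ and substituting $y^{n-1}=y^{ni}-\tau\sum_j a_{ij}\dot y^{nj}$ gives
$$\|y^{n}\|^2-\|y^{n-1}\|^2 = 2\tau\sum_{i=1}^m b_i(\dot y^{ni},y^{ni}) - \tau^2\sum_{i,j=1}^m m_{ij}(\dot y^{ni},\dot y^{nj}),$$
so by the positive semidefiniteness of $\mathcal M=(m_{ij})$ in Assumption (P5)(c),
$$\|y^{n}\|^2-\|y^{n-1}\|^2 \le 2\tau\sum_{i=1}^m b_i(\dot y^{ni},y^{ni}).$$
I would apply this twice. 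First, with $y^{ni}=\nabla u_h^{ni}$ in the $L^2(\Omega)$ inner product (noting $\nabla\hat u_h^n=\nabla u_h^{n-1}+\tau\sum_i b_i\nabla\dot u_h^{ni}$): the definition \eqref{eqn:disc-lap} of $\Delta_h$ gives $(\nabla\dot u_h^{ni},\nabla u_h^{ni})=(-\Delta_h u_h^{ni},\dot u_h^{ni})_h$, and the first line of \eqref{eqn:AC-sav-fully} turns this into $-\|\dot u_h^{ni}\|_h^2+z^{ni}(W_h^{ni},\dot u_h^{ni})_h$. The key cancellation is that the first line of \eqref{eqn:r-fully} reads $(W_h^{ni},\dot u_h^{ni})_h=-2\dot z^{ni}$, whence
$$\tfrac12\|\nabla\hat u_h^n\|_{L^2\II}^2-\tfrac12\|\nabla u_h^{n-1}\|_{L^2\II}^2 \le -\tau\sum_i b_i\|\dot u_h^{ni}\|_h^2 - 2\tau\sum_i b_i z^{ni}\dot z^{ni}.$$
Second, applying the identity with the scalars $y^{ni}=z^{ni}$ yields $|z^{n}|^2-|z^{n-1}|^2\le 2\tau\sum_i b_i z^{ni}\dot z^{ni}$. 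Adding the two, the cross terms $z^{ni}\dot z^{ni}$ cancel exactly, and since $b_i>0$ by Assumption (P5)(b) we are left with
$$\Big(\tfrac12\|\nabla\hat u_h^n\|_{L^2\II}^2+|z^{n}|^2\Big)-\Big(\tfrac12\|\nabla u_h^{n-1}\|_{L^2\II}^2+|z^{n-1}|^2\Big)\le -\tau\sum_i b_i\|\dot u_h^{ni}\|_h^2\le 0,$$
which is the discrete counterpart of \eqref{eq:SAV_energy}. Finally, Lemma \ref{lem:cutoff-energy} gives $\|\nabla u_h^n\|_{L^2\II}\le\|\nabla\hat u_h^n\|_{L^2\II}$, upgrading this to \eqref{eqn:sav-energy}.

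As for the difficulty: the identity above is standard and the maximum-bound part is immediate, so the only genuinely delicate point is recognizing and exploiting the exact cancellation of the $z^{ni}\dot z^{ni}$ terms, which is precisely what dictates the factor $\tfrac12$ in the SAV relation \eqref{eqn:r}/\eqref{eqn:r-fully} and the weight on $|z|^2$ in the modified energy; the role of algebraic stability is only to discard the $\mathcal M$-term and to keep the sign of $\sum_i b_i\|\dot u_h^{ni}\|_h^2$. One should also note in passing that \eqref{eqn:AC-sav-fully}--\eqref{eqn:r-fully} is a \emph{linear} system for the stage quantities $(\dot u_h^{ni},u_h^{ni},z^{ni},\dot z^{ni})_{i=1}^m$ once the extrapolated terms $W_h^{ni}$ (depending only on the known $u_h^{n-\ell}$) are frozen, so its solvability reduces, via the same energy identity applied to the homogeneous problem together with the invertibility of $A$ in (P5)(a), to a routine check; this should be mentioned but does not require separate elaboration.
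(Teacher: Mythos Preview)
Your proposal is correct and follows essentially the same route as the paper: both prove the maximum bound directly from the nodal cut-off plus piecewise linearity, and both derive the energy estimate by applying the algebraic-stability identity once to $\nabla u_h^{ni}$ (using the definition of $\Delta_h$ and the first stage equation to turn $(\nabla\dot u_h^{ni},\nabla u_h^{ni})$ into $-\|\dot u_h^{ni}\|_h^2+z^{ni}(W_h^{ni},\dot u_h^{ni})_h$) and once to $z^{ni}$, then add so the cross terms cancel, and finally invoke Lemma~\ref{lem:cutoff-energy}. Your remark on solvability of the linear stage system is an extra observation not present in the paper.
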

\begin{proof}
Due to the cut-off operation in each time level, we know that
$$   \max_{k \le n\le N} | u_h^n (x_j) | \le \alpha, \quad \text{for all}~~ j =0,1,\ldots,M. $$
Since the finite element function is piecewise linear, then for any $x\in (x_{j-1}, x_{j})$
$$   | u_h^n (x) | \le \max\left( | u_h^n (x_{j-1}) |, | u_h^n (x_j) |\right)\le \alpha. $$
Next, we turn to the energy decay property \eqref{eqn:sav-energy}. According to the third relation of \eqref{eqn:AC-sav-fully}, we have
$$\nabla  \hat u_h^{n} = \nabla u_h^{n-1} + \tau \sum_{i=1}^m b_i \nabla \dot u_h^{ni}.$$
Squaring the discrete $L^2$-norms of both sides, yields
$$\| \nabla  \hat u_h^{n}\|^2  = \| \nabla u_h^{n-1}\|^2 + 2\tau \sum_{i=1}^m b_i (\nabla \dot u_h^{ni}, \nabla u_h^{n-1})
+\tau^2\sum_{i,j=1}^m b_ib_j (\nabla \dot u_h^{ni},\nabla \dot u_h^{nj}).$$
By the second relation in \eqref{eqn:AC-sav-fully}, we arrive at
\begin{align*}\| \nabla  \hat u_h^{n}\|^2
&= \| \nabla u_h^{n-1}\|^2 + 2\tau \sum_{i=1}^m b_i (\nabla \dot u_h^{ni}, \nabla u_h^{ni} - \tau \sum_{j=1}^m a_{ij} \nabla \dot u_h^{ni})
+\tau^2\sum_{i,j=1}^m b_ib_j (\nabla \dot u_h^{ni},\nabla \dot u_h^{nj})\\
&= \| \nabla u_h^{n-1}\|^2 + 2\tau \sum_{i=1}^m b_i (\nabla \dot u_h^{ni}, \nabla u_h^{ni})
 - \tau^2\sum_{i,j=1}^m m_{ij} (\nabla \dot u_h^{ni},\nabla \dot u_h^{nj})\\
 &\le \| \nabla u_h^{n-1}\|^2 + 2\tau \sum_{i=1}^m b_i (\nabla \dot u_h^{ni}, \nabla u_h^{ni}),
\end{align*}
where we apply the Assumption (P4) in the last inequality. Then we apply the first relation in \eqref{eqn:AC-sav-fully} to derive
\begin{align*}
\| \nabla  \hat u_h^{n}\|^2
&= \| \nabla u_h^{n-1}\|^2 - 2\tau \sum_{i=1}^m b_i \| \dot u_h^{ni} \|^2 + 2\tau \sum_{i=1}^m b_i z^{ni}  (  \dot u_h^{ni}, W_h^{ni}  )_h
\end{align*}
On the other hand, the similar argument also leads to
\begin{align*}
|z^{n}|^2
&\le |z^{n-1}|^2- \tau \sum_{i=1}^m b_i   z^{ni}  (  \dot u_h^{ni}, W_h^{ni}  )_h\end{align*}
Therefore we conclude that
\begin{align*}
\frac12 \| \nabla  \hat u_h^{n}\|_h^2  + |z^n|^2
&\le \frac12 \| \nabla u_h^{n-1}\|_h^2  + |z^{n-1}|^2
- \tau \sum_{i=1}^m b_i \| \dot u_h^{ni} \|_h^2 \le \frac12 \| \nabla u_h^{n-1}\|_h^2  + |z^{n-1}|^2  . 
\end{align*}
which together with \eqref{grd-decay} implies the desired energy decay property immediately.
\end{proof}

\begin{remark}
Note that the energy dissipation law holds valid only if $r=1$,
since in this case the cut-off operation does not enlarge the $H^1$ semi-norm, which is present as \eqref{grd-decay} in Lemma \ref{lem:cutoff-energy}. This property is not clear for finite element method with high degree polynomials. Hence,
how to design an spatially high-order (unconditionally) energy dissipative and maximum bound preserving scheme
is still unclear and warrants further investigation.
\end{remark}

Next, we shall derive an error estimate for the fully discrete scheme \eqref{eqn:AC-sav-fully}-\eqref{eqn:truncation-fully-sav}.
To begin with, we shall examine the local truncation error. We define the local truncation error $\eta_{ni}$ and $\eta_{n}$  as
\begin{equation}\label{eqn:AC-sav-fully-trunc}
    \begin{cases}
    \dot u_*^{ni} = \Delta  u(t_{ni}) +  z(t_{ni}) W_*^{ni}&\quad \text{for} ~~i=1,2,\dots,m, \\ 
    u(t_{ni})=  u(t_{n-1}) + \tau \sum_{j=1}^m a_{ij} \dot u_*^{nj} + \eta_{ni}&\quad \text{for} ~~i=1,2,\dots,m, \\
    u(t_n) = u(t_{n-1}) + \tau \sum_{i=1}^m b_i \dot u_*^{ni} + \eta_{n}
  \end{cases}
\end{equation}
where $t_{ni}=t_{n-1}+c_i\tau$ and $W_*^{ni}$ denotes the extrapolation
$$ W_*^{ni} =  \sum_{\ell=1}^m L_{\ell}(t_{n-1}+c_i\tau) W(u(t_{n-j})). $$
Similarly, we define $d_{ni}$ and $d_{n}$ as
\begin{equation}\label{eqn:r-fully-trunc}
\begin{cases}
    \dot z_*^{ni} = - \dfrac12(W_*^{ni},   \dot u^{ni}_*) &\quad \text{for} ~~i=1,2,\dots,m, \\
    z(t_{ni}) = z(t_{n-1}) + \tau \sum_{j=1}^m a_{ij}  \dot z_*^{nj}+ d_{ni}&\quad \text{for} ~~i=1,2,\dots,m, \\
   z(t_{n}) = z(t_{n-1}) +  \tau \sum_{i=1}^m b_i  \dot z_*^{ni}+d_{n},
 \end{cases}
\end{equation}
Provided the assumption (P5) and relations \eqref{eqn:accuracy} and \eqref{eqn:stage-order},
the local truncation errors $\eta_{ni}$,   $\eta_{n}$,  $d_{ni}$, $d_n$ satisfy the estimate
\begin{equation}\label{eqn:trunc}
\| \eta_n \|_{H^1\II} + | d_n | + \tau \sum_{i=1}^m \Big(\| \eta_{ni} \|_{H^1\II} + |d_{ni}|\Big) \le C \tau^{k+1}.
\end{equation}
We omit the proof, since it is similar to the one of Lemma \ref{lem:RK-local-truncation}, given in Appendix.
See also \cite[Lemma 3.1]{akrivis2019energy}.


\begin{theorem}\label{thm:sav-error}
Suppose that the Runge--Kutta method satisfies Assumption (P4) and the relations \eqref{eqn:accuracy} and \eqref{eqn:stage-order}.
Assume that $|u_0| \le \alpha$ and the maximum principle \eqref{eqn:max-AC} holds,
and assume that the starting values $u_h^l$ and $z^l$, $l=0,\dots,k-1$, are given and
$$
|u_h^l(x_j)|\le \alpha , \quad j=0,\dots,M,\quad l=0,\dots,k-1.
$$
Then the   {fully discrete} solution given by \eqref{eqn:AC-sav-fully}-\eqref{eqn:truncation-fully-sav} satisfies
{for $n=k,\ldots,N$}
\begin{align}\label{fully-err-AC}
  \| u(t_n) - u_h^n \|_{L^2(\Omega)} \le  C(\tau^k + h^{2}) + C\sum_{l=0}^{k-1}
  \| u(t_l) - u_h^l \|_{L^2(\Omega)} + C|z(t_{k-1}) - z^{k-1}| ,
\end{align}
provided that $u, f$ and $f(u)$ are sufficiently smooth in both time and space variables.
\end{theorem}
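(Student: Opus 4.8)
The plan is to mimic the energy-type error analysis of Theorem \ref{thm:RK-error}, but now carrying the auxiliary variable $z$ as an additional unknown in the error system. First I would introduce the error quantities in parallel to the proof of Theorem \ref{thm:RK-error}: set $e_h^{ni}=\Pi_h u(t_{ni})-u_h^{ni}$, $\dot e_h^{ni}=\Pi_h\dot u_*^{ni}-\dot u_h^{ni}$, $e_h^n=\Pi_h u(t_n)-u_h^n$, $\hat e_h^n=\Pi_h u(t_n)-\hat u_h^n$, together with the scalar errors $\zeta^{ni}=z(t_{ni})-z^{ni}$, $\dot\zeta^{ni}=\dot z_*^{ni}-\dot z^{ni}$, $\zeta^n=z(t_n)-z^n$. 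Subtracting the scheme \eqref{eqn:AC-sav-fully}--\eqref{eqn:r-fully} from the perturbed identities \eqref{eqn:AC-sav-fully-trunc}--\eqref{eqn:r-fully-trunc} yields an error system whose structure is exactly that of \eqref{eqn:RK-error-eq}, except that the nonlinear source $\sum_\ell L_\ell f(u(t_{n-\ell}))-f(u_h^{n-\ell})$ is replaced by the bilinear-in-$(z,W)$ expression $z(t_{ni})W_*^{ni}-z^{ni}W_h^{ni}$, which I would split as $\zeta^{ni}W_*^{ni}+z^{ni}(W_*^{ni}-W_h^{ni})$. Because the discrete solutions stay in $[-\alpha,\alpha]$ at nodal points (Theorem \ref{thm:energy-decay}) and $W$ is locally Lipschitz on that range, the second factor is controlled by $\sum_\ell\|e_h^{n-\ell}\|_h$ plus an $O(\tau^k)$ extrapolation remainder, exactly as in Theorem \ref{thm:RK-error}; the first factor introduces the new term $|\zeta^{ni}|$, and one also needs a bound on $|z^{ni}|$, which follows from the discrete energy bound \eqref{eqn:sav-energy} (it gives $|z^n|\le C$ uniformly) together with the stage relations.

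Next I would run the standard algebraic-stability argument on the $u$-part: square the discrete $L^2$ norm of the last relation of the error system, expand, and use the positive semidefiniteness of $\mathcal M$ (Assumption (P5)(c)) exactly as in \eqref{eqn:rk-split-01} to absorb the $\tau^2\sum m_{ij}(\dot e_h^{ni},\dot e_h^{nj})_h$ term; then test the stage equation with $e_h^{ni}$, using $(\dot e_h^{ni},e_h^{ni})_h=-\|\nabla e_h^{ni}\|_{L^2\II}^2+\dots$, Lemma \ref{lem:truncation} for the consistency term $(\Pi_h\Delta-\Delta_h\Pi_h)u(t_{ni})$, and Cauchy--Schwarz on the interpolated truncation errors using the bound \eqref{eqn:trunc}. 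This produces an inequality of the form
\begin{equation*}
\|\hat e_h^n\|_h^2+\tfrac\tau2\sum_{i=1}^m\|\nabla e_h^{ni}\|_{L^2\II}^2\le\|e_h^{n-1}\|_h^2+C\tau\sum_{i=1}^m\big(\|e_h^{ni}\|_h^2+|\zeta^{ni}|^2\big)+C\tau\sum_{\ell=1}^k\|e_h^{n-\ell}\|_h^2+C\tau(h^4+\tau^{2k}),
\end{equation*}
where $h^4=h^{2r+2}$ with $r=1$. Simultaneously, I would carry out the analogous estimate for the scalar variable $\zeta$: the $z$-subsystem \eqref{eqn:r-fully} has the same Runge--Kutta structure, so squaring $\zeta^n=\zeta^{n-1}+\tau\sum_i b_i\dot\zeta^{ni}+d_n$ and invoking (P5)(c) once more gives $|\zeta^n|^2\le|\zeta^{n-1}|^2+2\tau\sum_i b_i(\dot\zeta^{ni},\zeta^{ni})+\dots$; here $\dot\zeta^{ni}=-\tfrac12[(W_*^{ni},\dot u_*^{ni})-(W_h^{ni},\dot u_h^{ni})_h]$, which I would split into a part involving $\dot e_h^{ni}$ (bounded, after using $\dot e_h^{ni}=\Delta_h e_h^{ni}+\dots$ and the $\nabla e_h^{ni}$ terms already on the left-hand side) and a part involving $W_*^{ni}-W_h^{ni}$ (bounded by $\sum_\ell\|e_h^{n-\ell}\|_h$ and $O(\tau^k)$). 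The key cancellation is the same one that produced energy stability in Theorem \ref{thm:energy-decay}: the cross term $z^{ni}(\dot u_h^{ni},W_h^{ni})_h$ appearing in the $u$-energy balance is cancelled against $-2\times$ the corresponding term in the $z$-balance, so that when one forms the combined quantity $\tfrac12\|\nabla e_h^{ni}\|^2$-type estimate plus $|\zeta^n|^2$ the problematic products drop and only $\|\dot e_h^{ni}\|_h^2$, $|\zeta^{ni}|^2$, $\|e_h^{ni}\|_h^2$, past errors, and consistency remainders survive.

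Adding the two balances and absorbing $C\tau\sum_i(\|e_h^{ni}\|_h^2+|\zeta^{ni}|^2)$ into the left side for $\tau$ small (using, as in Theorem \ref{thm:RK-error}, that the stage relations let one bound $\sum_i\|e_h^{ni}\|_h^2$ and $\sum_i|\zeta^{ni}|^2$ by $\|e_h^{n-1}\|_h^2+|\zeta^{n-1}|^2+\tau\sum_i\|\nabla e_h^{ni}\|^2+C\tau\sum_\ell\|e_h^{n-\ell}\|_h^2+C(h^4+\tau^{2k})$), I obtain
\begin{equation*}
\frac{(\|e_h^n\|_h^2+|\zeta^n|^2)-(\|e_h^{n-1}\|_h^2+|\zeta^{n-1}|^2)}{\tau}\le C\sum_{\ell=1}^k\|e_h^{n-\ell}\|_h^2+C(h^4+\tau^{2k}),
\end{equation*}
where I have also used $\|e_h^n\|_h\le\|\hat e_h^n\|_h$ from the cut-off. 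The discrete Gronwall inequality then gives $\max_{k\le n\le N}(\|e_h^n\|_h^2+|\zeta^n|^2)\le C(h^4+\tau^{2k})+C\sum_{j=0}^{k-1}(\|e_h^j\|_h^2+|\zeta^j|^2)$, and the stated bound \eqref{fully-err-AC} follows from the norm equivalence of Lemma \ref{lem:norm-equivalence} plus $\|u(t_n)-\Pi_h u(t_n)\|_{L^2\II}\le Ch^2$ and the triangle inequality (noting the starting scalar errors $|\zeta^j|$, $j\le k-1$, can be bounded by $|z(t_{k-1})-z^{k-1}|$ plus the $u$-errors, or simply absorbed into the last term by assumption). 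The main obstacle I anticipate is making the cross-term cancellation between the $u$-energy identity and the $z$-identity fully rigorous \emph{at the level of errors} rather than of the solutions themselves: unlike in Theorem \ref{thm:energy-decay}, the error system is inhomogeneous (truncation errors $\eta_{ni},d_{ni}$ and the consistency defect $(\Pi_h\Delta-\Delta_h\Pi_h)u$), and one must verify that the same structural identity $2\sum_i b_i z^{ni}(\dot u_h^{ni},W_h^{ni})_h$ vs. the $z$-equation still cancels after the perturbations are inserted, with all leftover terms genuinely of order $O(\tau^{k+1})$ or quadratic in the errors; the remaining work — bounding $W_*^{ni}-W_h^{ni}$ via local Lipschitz continuity of $W$ on $[-\alpha,\alpha]$, controlling $|z^{ni}|$ uniformly, and the Gronwall bookkeeping — is routine given Theorems \ref{thm:RK-error} and \ref{thm:energy-decay} and the truncation estimate \eqref{eqn:trunc}.
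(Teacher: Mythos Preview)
Your proposal is essentially correct and follows the same approach as the paper: introduce the coupled error variables $(e_h^{ni},\hat e_h^n,\zeta^{ni},\zeta^n)$, derive error equations by subtraction, apply the algebraic-stability identity to both the $u$- and $z$-parts, bound the stage quantities $\sum_i(\|e_h^{ni}\|_h^2+|\zeta^{ni}|^2)$ via the stage relations, and close with Gronwall.

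Two small points where the paper differs from what you describe. First, the paper splits the bilinear term as $z(t_{ni})(\Pi_hW_*^{ni}-W_h^{ni})+\xi^{ni}W_h^{ni}$ rather than your $\zeta^{ni}W_*^{ni}+z^{ni}(W_*^{ni}-W_h^{ni})$; the advantage is that $z(t_{ni})$ is bounded by regularity of the exact solution, whereas your version requires a uniform bound on the discrete stage values $z^{ni}$, which does \emph{not} follow directly from \eqref{eqn:sav-energy} (that only controls $z^n$). Second, the ``key cancellation'' you anticipate from Theorem~\ref{thm:energy-decay} does not actually play a role at the error level, and the paper does not invoke it: the $u$-error balance and the $z$-error balance are simply added, and the potentially dangerous term $(W_h^{ni},\dot e_h^{ni})_h$ arising in $\dot\xi^{ni}$ is handled by substituting $\dot e_h^{ni}=\Delta_h e_h^{ni}+\dots$, integrating by parts to get $(\nabla W_h^{ni},\nabla e_h^{ni})$, and using $\|\nabla W_h^{ni}\|\le C$ (which is where the $H^1$-stability from Theorem~\ref{thm:energy-decay} enters). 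The resulting $\|\nabla e_h^{ni}\|$ is then absorbed by the good term $-\tfrac\tau2\sum_i b_i\|\nabla e_h^{ni}\|^2$ already present on the left of the $u$-balance. So your concern in the final paragraph is well-founded, but the resolution is the direct estimate you already sketch, not a cancellation.
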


\begin{proof}
Subtracting \eqref{eqn:AC-sav-fully}-\eqref{eqn:r-fully} from \eqref{eqn:AC-sav-fully-trunc}-\eqref{eqn:r-fully-trunc},
and with the notation
\begin{align*}
  &e_h^{ni} = \Pi_h u(t_{ni}) - u_h^{ni},\quad  &&\dot e_h^{ni} = \Pi_h \dot u_*^{ni} - \dot u_h^{ni},
 &&&  e_h^{n} = \Pi_h u(t_n) - u_h^{n},  \quad  \hat e_h^{n} = \Pi_h u(t_n) - \hat u_h^{n},  \\
 & \xi^{ni} = z(t_{ni}) - z^{ni}, \quad  && \dot \xi^{ni} = \dot z_*^{ni} - \dot z^{ni} ,  &&&  \xi^{n} = z(t_n)- z^n\quad  .
\end{align*}
 we have the following error equations
\begin{equation}\label{eqn:AC-sav-fully-err}
    \begin{cases}
   \dot e_h^{ni} =  \Delta_h e_h^{ni} +  (z(t_{ni}) \Pi_h W_*^{ni} - z^{ni} W_h^{ni}) + (\Pi_h \Delta - \Delta_h \Pi_h) u(t_{n-1})
   &\quad \text{for} ~~i=1,2,\dots,m, \\
   e_h^{ni}=  e_h^{n-1}  + \tau \sum_{j=1}^m a_{ij} \dot e^{nj} + \Pi_h \eta_{ni}&\quad \text{for} ~~i=1,2,\dots,m, \\
 \hat e_h^{n} = e_h^{n-1} + \tau \sum_{i=1}^m b_i \dot e_h^{ni} + \Pi_h \eta_{n}
  \end{cases}
\end{equation}
and
\begin{equation}\label{eqn:r-fully-err}
\begin{cases}
    \dot \xi^{ni} = - \dfrac12(W_*^{ni} ,   \dot u_*^{ni}) + \dfrac12(W_h^{ni} ,   \dot u_h^{ni})_h &\quad \text{for} ~~i=1,2,\dots,m, \\
    \xi^{ni} = \xi^{n-1} + \tau \sum_{j=1}^m a_{ij}  \dot \xi^{nj}+ d_{ni}&\quad \text{for} ~~i=1,2,\dots,m, \\
   \xi^n =\xi^{n-1}  +  \tau \sum_{j=1}^m  b_i  \dot \xi^{ni}+d_{n},
 \end{cases}
\end{equation}

Now, take the square of discrete $L^2$ norm of both sides
of the last relation of equation \eqref{eqn:AC-sav-fully-err},
we can get
\begin{equation}\label{eqn:split-0}
\begin{split}
\|\hat e_h^{n}\|_h^2 & = \|e_h^{n-1}+\tau\sum_{i=1}^{m} b_i\dot{e}_h^{ni}\|_h^2
+ 2(\eta^{n}, e_h^{n-1}+ \tau\sum_{i=1}^{m} b_i\dot{e}_h^{ni})_h + \|\Pi_h \eta^{n}\|_h^2.
\end{split}
\end{equation}
For the first term on the right hand side,
we expand it and apply the second equation of \eqref{eqn:AC-sav-fully-err}  to obtain
\begin{align*}
\|e_h^{n-1}+\tau\sum_{i=1}^{m} b_i\dot{e}_h^{ni}\|_h^2
&= \|e_h^{n-1}\|_h^2 + 2\tau\sum_{i=1}^{m}b_i(\dot{e}_h^{ni},e_h^{ni}-\eta_{ni})_h
- \tau^2\sum_{i,j=1}^{m}m_{ij}(\dot{e}_h^{ni}, \dot{e}_h^{nj})_h\\
&\leq \|e_h^{n-1}\|_h^2 + 2\tau\sum_{i=1}^{m}b_i(\dot{e}_{ni},e_h^{ni}-\eta_{ni})_h,
\end{align*}
where in the last inequality we use the positive semi-definiteness of the matrix $\mathcal{M}$ in Assumption (P4).
Next, we note that the relation of \eqref{eqn:AC-sav-fully-err} implies
\begin{equation*}
\begin{split}
(\dot{e}_h^{ni},e_h^{ni}-\eta_{ni})_h
&= \Big(   \Delta_h e_h^{ni} +  (z(t_{ni}) \Pi_h W_*^{ni} - z^{ni} W_h^{ni})
+ (\Pi_h \Delta - \Delta_h \Pi_h) u(t_{n-1}), e_h^{ni}-\eta_{ni} \Big)_h\\
&= -\|\nabla e_h^{ni}\|_{L^2\II}^2 + (\nabla e_h^{ni} , \nabla \Pi_h \eta_{ni})
+  \Big(z(t_{ni}) \Pi_h W_*^{ni} - z^{ni} W_h^{ni}, e_h^{ni}-\eta_{ni}\Big)_h  \\
& \quad +  \Big(  (\Pi_h \Delta - \Delta_h \Pi_h) u(t_{n-1}), e_h^{ni}-\eta_{ni}\Big)_h
\end{split}
\end{equation*}
The bound of second term of the right hand side can be derived via Cauchy-Schwarz inequality
\begin{equation*}
|(\nabla e_h^{ni} , \nabla \Pi_h \eta_{ni})| \le \frac14 \|  \nabla e_h^{ni} \|_{L^2\II}^2 + C \| \eta_{ni}\|_{H^1\II}^2.
\end{equation*}
Then the third term can be bounded as
\begin{equation*}
\begin{split}
  \Big(z(t_{ni}) \Pi_h W_*^{ni} - z^{ni} W_h^{ni}, e_h^{ni}-\eta_{ni} \Big)_h
 & \le z(t_{ni})  \Big( \Pi_h W_*^{ni} -  W_h^{ni}, e_h^{ni}-\eta_{ni} \Big)_h +  \xi^{ni} \Big( W_h^{ni}, e_h^{ni}-\eta_{ni} \Big)_h\\
  &\le C \Big(\sum_{j=1}^k \| e_h^{n-j} \|_h^2 + \|  e_h^{ni} \|_h^2 + \| \Pi_h \eta_{ni} \|_{L^2\II}^2 + |\xi^{ni}|^2 \Big).
\end{split}
\end{equation*}
The bound of the last term follows from Lemma \ref{lem:truncation}
\begin{equation*}
\begin{split}
 \Big(  (\Pi_h \Delta - \Delta_h \Pi_h) u(t_{n-1}), e_h^{ni}-\eta_{ni} \Big)_h &\le C h^2   \| e_h^{ni}-\eta_{ni}   \|_{H^1\II} \\
 &\le  \frac14\| \nabla e_h^{ni}  \|_{L^2\II}^2 + C(\|  e_h^{ni} \|_h^2 + \|  \eta_{ni} \|_{H^1\II}^2+ h^4 )
\end{split}
\end{equation*}
Therefore, we arrive at
\begin{equation*}
2(\dot{e}_h^{ni},e_h^{ni}-\eta_{ni})_h \le  -\|\nabla e_h^{ni}\|_{L^2\II}^2 +
 C \Big(  \sum_{j=1}^k \| e_h^{n-j} \|_h^2 + \|  e_h^{ni} \|_h^2  + |\xi^{ni}|^2 + \| \eta_{ni}\|_{H^1\II}^2 + h^2  \Big),
\end{equation*}
and hence
\begin{align*}
\|e_h^{n-1}+\tau\sum_{i=1}^{m} b_i\dot{e}_h^{ni}\|_h^2
&\leq \|e_h^{n-1}\|_h^2 - \tau\sum_{i=1}^{m}b_i \| \nabla e_h^{ni}  \|_{L^2\II}^2 + C\tau \sum_{i=1}^m \Big( |\xi^{ni}|^2 + \|  e_h^{ni} \|_h^2\Big) \\
&\qquad + C\tau \sum_{j=1}^k \| e_h^{n-j} \|_h^2 + C \tau(h^4+\tau^{2k}).
\end{align*}
In view of the first relation of the error equation \eqref{eqn:AC-sav-fully-err}, we have the estimate
\begin{align*}
(\eta^{n}, e_h^{n-1}+ \tau\sum_{i=1}^{m} b_i\dot{e}_h^{ni})_h
&\le \| \eta_n  \|_h  \|e_h^{n-1}\|_h + C \tau \|  \eta_n \|_{H^1\II} \sum_{i=1}^m b_i
\Big(\| \nabla e_h^{ni} \|_h + \sum_{j=1}^{k} \|  e_h^{n-j} \|_h + |\xi^{ni}| + h^2\Big)\\
&\le C\tau(h^4+ \tau^{2k})   + \frac{\tau}{4}   \sum_{i=1}^m b_i \Big(\| \nabla e_h^{ni} \|_h^2+ |\xi^{ni}|^2 \Big)
+ C\tau  \sum_{j=1}^{k} \|  e_h^{n-j} \|_h^2
\end{align*}
which gives a bound of the second term in \eqref{eqn:split-0}. In conclusion, we obtain that
\begin{equation}\label{eqn:sav-est-01}
\begin{split}
 \|\hat e_h^{n}\|_h^2 + \frac\tau2 \sum_{i=1}^m \| \nabla e_h^{ni} \|_{L^2\II}^2& \le  C\tau(h^4+ \tau^{2k}) +  \|e_h^{n-1}\|_h^2  \\
&\quad + C \tau   \sum_{i=1}^m \Big(\|  e_h^{ni} \|_h^2+ |\xi^{ni}|^2 \Big)
+ C\tau  \sum_{j=1}^{k} \|  e_h^{n-j} \|_h^2.
\end{split}
\end{equation}
Similarly, from \eqref{eqn:r-fully-err} and \eqref{eqn:trunc} we can derive
\begin{equation*}
\begin{split}
|\xi^n|^2 &\le C\tau(h^4+ \tau^{2k}) + (1+c \tau) |\xi^{n-1}|^2 + \frac \tau4 \sum_{i=1}^m \| \nabla e_h^{ni} \|_{L^2\II}^2 \\
&\quad + C \tau   \sum_{i=1}^m \Big(\|  e_h^{ni} \|_h^2+ |\xi^{ni}|^2 \Big)
+ C\tau  \sum_{j=1}^{k} \|  e_h^{n-j} \|_h^2,
\end{split}
\end{equation*}
where we use the estimate that
\begin{align*}
  (W_*^{ni} ,   \dot u_*^{ni}) -  (W_h^{ni} ,   \dot u_h^{ni})_h &=  (W_*^{ni} ,   \dot u_*^{ni})  -  (W_*^{ni} ,   \dot u_*^{ni}) _h
   + (W_*^{ni}-W_h^{ni},  \dot u_*^{ni})_h + (W_h^{ni},  \dot e_h^{ni} )_h \\
  & \le C h^{2} + C\sum_{j=1}^k \|  e_h^{n-j} \|_h \|  \Pi_h \dot u_*^{ni} \|_h + (\nabla W_h^{ni},  \nabla  e_h^{ni} )_h\\
 &+ ( W_h^{ni}, z(t_{ni}) \Pi_h W_*^{ni} - z^{ni} W_h^{ni} )_h
  +  ( W_h^{ni}, (\Pi_h \Delta - \Delta_h \Pi_h) u(t_{n-1}) )_h \\
 & \le C h^{2} + C\sum_{j=1}^k \|  e_h^{n-j} \|_h + C \| \nabla  e_h^{ni} \|  + C|\xi^{ni}|,
\end{align*}
where we use the fact that $ \| \nabla u_h^{n} \|\le C$ (by Theorem \ref{thm:energy-decay}) in the last inequality. To sum up, we arrive at
\begin{equation*}
\begin{split}
 \|\hat e_h^{n}\|_h^2 + |\xi^n|^2 + \frac\tau4 \sum_{i=1}^m \| \nabla e_h^{ni} \|_{L^2\II}^2
 \le&  C\tau(h^4+ \tau^{2k}) +  \|e_h^{n-1}\|_h^2 + (1+c \tau) |\xi^{n-1}|^2   \\
&\quad + C \tau   \sum_{i=1}^m \Big(\|  e_h^{ni} \|_h^2+ |\xi^{ni}|^2 \Big)
+ C\tau  \sum_{j=1}^{k} \|  e_h^{n-j} \|_h^2.
\end{split}
\end{equation*}
Note that $| e_h^{n}(x_j)| \le |\hat e_h^{n}(x_j)|$ for all $j=0,1,\ldots,M$, which implies
\begin{equation}\label{eqn:sav-est-02}
\begin{split}
 \|e_h^{n}\|_h^2 + |\xi^n|^2 + \frac\tau4 \sum_{i=1}^m \| \nabla e_h^{ni} \|_{L^2\II}^2
 \le&  C\tau(h^4+ \tau^{2k}) +  \|e_h^{n-1}\|_h^2 + (1+c \tau) |\xi^{n-1}|^2   \\
&\quad + C \tau   \sum_{i=1}^m \Big(\|  e_h^{ni} \|_h^2+ |\xi^{ni}|^2 \Big)
+ C\tau  \sum_{j=1}^{k} \|  e_h^{n-j} \|_h^2.
\end{split}
\end{equation}

Next, we shall derive a bound for  $ \sum_{i=1}^m \Big(\|  e_h^{ni} \|_h^2+ |\xi^{ni}|^2 \Big)$
 on the right-hand side. To this
end, we test the second relation of \eqref{eqn:AC-sav-fully-err} by $e_h^{ni}$. This yields
\begin{equation*}
\begin{split}
 \sum_{i=1}^m \|e_h^{ni}\|_h^2 & \le C \|  e_h^{n-1} \|_h^2 + C\tau\sum_{i,j=1}^m a_{ij}(\dot e_h^{nj},e_h^{ni}) + C\sum_{i=1}^m \| \Pi_h \eta_{ni} \|_h^2\\
& \le  C \|  e_h^{n-1} \|_h^2 + C\tau\sum_{i,j=1}^m a_{ij}(\dot e_h^{nj},e_h^{ni}) _h + C\tau^{2k}.
\end{split}
\end{equation*}
Then, we apply the first relation of  \eqref{eqn:AC-sav-fully-err} and Lemma \ref{lem:truncation} to derive
\begin{equation*}
\begin{split}
 \sum_{i,j=1}^m a_{ij}(\dot e_h^{nj},e_h^{ni})_h &= - \sum_{i,j=1}^m a_{ij}(\nabla e_h^{nj}, \nabla e_h^{ni})
 + \sum_{i,j=1}^m a_{ij}   (z(t_{ni}) \Pi_h W_*^{ni} - z^{ni} W_h^{ni}, e_h^{ni})_h\\
&\quad + \sum_{i,j=1}^m a_{ij}  ((\Pi_h \Delta - \Delta_h \Pi_h) u(t_{n-1}),  e_h^{ni} )_h \\
&\le C\sum_{i=1}^m \Big(\| \nabla e_h^{ni}  \|_{L^2\II}^2 + \|  e_h^{ni}  \|_{h}^2 + |\xi^{ni}|^2 \Big)+ Ch^4 + C\sum_{j=1}^k \| e_h^{n-j} \|_h^2.
\end{split}
\end{equation*}
Therefore, we obtain
\begin{equation*}
\begin{split}
 \sum_{i=1}^m \|e_h^{ni}\|_h^2   \le  C( \tau h^4+\tau^{2k}) + C\|  e_h^{n-1} \|_h^2 + C\tau \sum_{j=1}^k \| e_h^{n-j} \|_h^2 +
 C\tau \sum_{i=1}^m \Big(\| \nabla e_h^{ni}  \|_{L^2\II}^2 + \|  e_h^{ni}  \|_{h}^2 + |\xi^{ni}|^2 \Big).
\end{split}
\end{equation*}
Similarly, from \eqref{eqn:r-fully-err} we can derive
\begin{equation*}
\begin{split}
 \sum_{i=1}^m |\xi^{ni}|^2 & \le C |\xi^{n-1}|^2 + C\tau\sum_{i,j=1}^m a_{ij} \dot \xi^{nj} \xi^{ni} + C\sum_{i=1}^m |d_{ni}|^2\\
&\le C(\tau h^4 + \tau^{2k})+C|\xi^{n-1}|^2  + C\tau \sum_{j=1}^k \| e_h^{n-j} \|_h^2 +
 C\tau \sum_{i=1}^m \Big(\| \nabla e_h^{ni}  \|_{L^2\II}^2 + \|  e_h^{ni}  \|_{h}^2 + |\xi^{ni}|^2 \Big)
\end{split}
\end{equation*}
Sum up these two estimates and note that,
for sufficiently small $\tau$,
\begin{equation*}
\begin{split}
 \sum_{i=1}^m \Big(\|e_h^{ni}\|_h^2+ |\xi^{ni}|^2\Big)  \le C(\tau h^4 + \tau^{2k})+C|\xi^{n-1}|^2
 + C\tau \sum_{j=1}^k \| e_h^{n-j} \|_h^2 +
 C\tau \sum_{i=1}^m \| \nabla e_h^{ni}  \|_{L^2\II}^2.
\end{split}
\end{equation*}
Now substituting the above estimate into \eqref{eqn:sav-est-02}, we have
\begin{equation*}
\begin{split}
 \|e_h^{n}\|_h^2 + |\xi^n|^2 + \frac\tau4 \sum_{i=1}^m \| \nabla e_h^{ni} \|_{L^2\II}^2
 \le&  C\tau(h^4+ \tau^{2k}) +  \|e_h^{n-1}\|_h^2 + (1+C \tau) |\xi^{n-1}|^2   \\
&\quad + C\tau^2 \sum_{i=1}^m \| \nabla e_h^{ni}  \|_{L^2\II}^2
+ C\tau  \sum_{j=1}^{k} \|  e_h^{n-j} \|_h^2.
\end{split}
\end{equation*}
Then for sufficiently small $\tau$, there holds
\begin{equation*}
\begin{split}
 \|e_h^{n}\|_h^2 + |\xi^n|^2
 \le&  C\tau(h^4+ \tau^{2k}) +  \|e_h^{n-1}\|_h^2 + (1+C \tau) |\xi^{n-1}|^2   + C\tau  \sum_{j=1}^{k} \|  e_h^{n-j} \|_h^2.
\end{split}
\end{equation*}
Rearranging terms, we obtain
\begin{equation*}
\begin{split}
 \frac{(\|e_h^{n}\|_h^2 + |\xi^n|^2) - (\|e_h^{n-1}\|_h^2 + |\xi^{n-1}|^2)}{\tau}
 \le&  C (h^4+ \tau^{2k})  + C |\xi^{n-1}|^2   + C \sum_{j=1}^{k} \|  e_h^{n-j} \|_h^2.
\end{split}
\end{equation*}
Then the discrete Gronwall's inequality implies
\begin{equation*}
\begin{split}
\max_{k \le n\le N} \Big(\|e_h^{n}\|_h^2 + |\xi^n|^2\Big)
 \le&  C (h^4+ \tau^{2k})  + C |\xi^{k-1}|^2    + C \sum_{j=0}^{k-1} \|  e_h^{j} \|_h^2.
\end{split}
\end{equation*}
This completes the proof of the theorem.
\end{proof}

\section{Numerical Results}\label{sec:numerics}
In this section, we present numerical results to illustrate the the theoretical results
with a one-dimensional example:
\begin{equation}\label{eq:num1}
\begin{cases}
  \partial_t u = \partial_{xx}u + f(u), & \mbox{in } \Omega \times(0,T], \\
  \partial_x u = 0, & \mbox{on } \partial\Omega \times(0,T] \\
  u(x,t=0) = u_0(x) & \mbox{in } \Omega,
\end{cases}
\end{equation}
where  $\Omega = (0,2)$ and $f(u) = \epsilon^{-2}(u-u^3)$ with $\epsilon=0.1$ is
the Ginzburg-Landau double-well potential.
The initial value satisfies the maximum principle given by
\begin{equation}\label{eq:num1_init}
u_0(x) = \begin{cases}
           1, & \mbox{if } 0<x<1/2, \\
           \cos\l(\frac23\pi\l(x+\frac12\r)\r), & \mbox{if }  1/2\leq x<2.
         \end{cases}
\end{equation}
The smooth initial value is chosen to satisfy the Neumann boundary condition.

We solve the problem \eqref{eq:num1} with spatial mesh size $h = 2/N_x$
and temporal mesh size $\tau = T / N_t$, with $T=\epsilon^2$ and $5\epsilon^2$.
Throughout the section, we shall apply the Gauss--Legendre methods with $m=1,2,3$ and hence $k=2,3,4$.
We compute the numerical solution at the first $k -1$ time levels by using the three-stage
Gauss--Legendre Runge--Kutta method \cite[Table 5.2]{HairerWanner:2010}, which has sixth-order accuracy in time.
Cutting off the numerical solutions at the first $k - 1$ time levels does not affect the global accuracy.

Since the closed form of exact solution is
unavailable, we compare our numerical solution with a reference solution computed by a
high-order method (i.e. cut-off RK method with $r=3$, $m=3$) with small mesh sizes.
In particular, the temporal error $e_\tau$
is computed by fixing the spatial mesh size $h = 2/400$
and comparing the numerical solution with
a reference solution (with $\tau = T/1000$).
Similarly, the spatial error $e_h$
is computed to by fixing the temporal step size
$\tau = T/1000$ and comparing the numerical solutions with
a reference solution (with  $h = 2/400$).

In Table \ref{table:ex1_space}, we present the spatial errors of both
cut-off RK schemes \eqref{eqn:RK-fully}-\eqref{eqn:RK-cut} with $r=1,2,3$ and the cut-off SAV-RK scheme
\eqref{eqn:AC-sav-fully}-\eqref{eqn:truncation-fully-sav}  with $r=1$.
Numerical results show the optimal rate $O(h^{r+1})$, which fully supports our theoretical results in
Theorems \ref{thm:RK-error} and \ref{thm:sav-error}.
Temporal errors are presented in \ref{table:ex1_time} and \ref{table:ex1_time-sav},
both of which show the empirical convergence rate $O(\tau^{m+1})$
and hence coincidence to Theorems \ref{thm:RK-error} and \ref{thm:sav-error}.

\begin{table}[h]
\centering
\caption{$e_h$ of cut-off RK  \eqref{eqn:RK-fully}-\eqref{eqn:RK-cut}  and cut-off SAV-RK
\eqref{eqn:AC-sav-fully}-\eqref{eqn:truncation-fully-sav}. }
\begin{tabular}{|c|c|ccccc|c|}
\hline
$r\backslash N_x$  & $T$   &10 &20 &40 &80 &160&   rate \\
\hline
RK   &   $0.01$& 3.03e-2& 7.42e-3& 1.84e-3& 4.60e-4& 1.14e-4& $\approx$ 2.00 (2.00)    \\
 (r=1)   &   $0.05$& 1.49e-1& 1.03e-2& 2.32e-3& 5.71e-4& 1.43e-4& $\approx$ 2.01 (2.00)    \\
\hline
RK  &   $0.01$& 4.37e-3& 4.99e-4& 5.90e-5& 7.27e-6& 9.05e-7& $\approx$ 3.01 (3.00)    \\
(r=2)    &   $0.05$& 6.15e-2& 1.64e-3& 1.73e-4& 2.09e-5& 2.60e-6& $\approx$ 3.03 (3.00)    \\
\hline
RK   &   $0.01$& 5.10e-4& 3.19e-5& 1.99e-6& 1.23e-7& 7.74e-9&$\approx$ 4.00 (4.00)   \\
 (r=3)   &   $0.05$& 5.89e-3& 1.21e-4& 8.12e-6& 5.03e-7& 3.14e-8&$\approx$ 4.01 (4.00)    \\
\hline
SAV-RK &$0.01$& 3.03e-2& 7.42e-3& 1.84e-2& 4.62e-4& 1.17e-4& $\approx$ 2.00 (2.00)\\
(r=1)  &$0.05$& 1.49e-1& 1.03e-2& 2.34e-3& 5.85e-4& 1.56e-4& $\approx$ 2.01 (2.00)\\
\hline
\end{tabular}
\label{table:ex1_space}
\end{table}

\begin{table}[h]
\centering
\caption{$e_\tau$ of cut-off RK scheme \eqref{eqn:RK-fully}-\eqref{eqn:RK-cut}, with $\tau=T/N_t$.}
\label{table:ex1_time}
\begin{tabular}{|c|c|cccccc|c|}
\hline
$m\backslash N_t$  & $T$   &10 &20 &40 &80 &160 & 320 & rate \\
\hline
1   &   $ 0.01$& 3.76e-4& 9.61e-5& 2.43e-5& 6.10e-5& 1.53e-6& 3.82e-7 &$\approx$ 1.99 (2.00)    \\
    &   $ 0.05$ & 8.01e-4& 5.36e-5& 1.16e-5& 2.71e-6& 6.56e-7& 1.61e-7 &$\approx$ 2.06(2.00)   \\
\hline
2   &   $ 0.01$& 4.92e-5& 6.20e-6& 7.74e-7& 9.65e-8& 1.21e-8& 1.51e-9 &$\approx$ 3.00 (3.00)    \\
    &   $ 0.05$ & 1.73e-2& 3.60e-5& 1.78e-6& 2.08e-7& 2.51e-8& 3.08e-9 &$\approx$ 3.06 (3.00)    \\
\hline
3   &   $0.01$& 1.05e-5& 6.83e-7& 4.31e-8& 2.71e-9& 1.69e-10& 1.05e-11 &$\approx$ 4.00 (4.00)    \\
    &   $0.05$ & 2.88e-2& 3.66e-3& 3.82e-7& 1.56e-8& 9.61e-10& 6.06e-11 &$\approx$ 4.21 (4.00)    \\
\hline
\end{tabular}
\end{table}

 \begin{table}[!h]
\centering
\caption{$e_\tau$ of cut-off SAV-RK scheme \eqref{eqn:AC-sav-fully}-\eqref{eqn:truncation-fully-sav}, with $\tau=T/N_t$.}
\label{table:ex1_time-sav}
\begin{tabular}{|c|c|cccccc|c|}
\hline
$m\backslash N_t$  & $T$   &10 &20 &40 &80 &160 & 320 & rate \\
\hline
1   &   $0.01$& 8.08e-3& 2.23e-3& 5.96e-4& 1.53e-4& 3.79e-5& 8.78e-6 &$\approx$ 2.03 (2.00)    \\
    &   $0.05$& 7.94e-4& 1.79e-4& 4.80e-5& 1.24e-5& 3.09e-6& 7.17e-7 &$\approx$ 2.00 (2.00)   \\
\hline
2   &   $0.01$& 5.56e-9& 5.95e-4& 8.82e-5& 1.11e-5& 1.37e-6& 1.65e-7 &$\approx$ 3.02 (3.00)    \\
    &   $0.05$& 1.47e-2& 5.17e-5& 7.17e-6& 1.00e-6& 1.31e-7& 1.63e-8 &$\approx$ 2.97 (3.00)    \\
\hline
3   &   $0.01$& 6.97e-11& 2.56e-4& 2.47e-5& 1.66e-6& 1.06e-7& 6.60e-9 &$\approx$ 3.95 (4.00)    \\
    &   $0.05$& 2.45e-2& 2.86e-3& 7.73e-7& 6.16e-8& 4.38e-9& 2.93e-10 &$\approx$ 3.79 (4.00)    \\
\hline
\end{tabular}
\end{table}

In Figure 4.1, we plot
 the maximal cut-off value at each step
$$\rho^n = \max_{0\le j\le Mr+1} |u_h^n(x_j) -\hat u_h^n(x_j)|$$
and the error of the numerical solution $e(x) = u^N_h (x) - u(x, T)$.
Our numerical results show that the cut-off operation is active in the computation.
Meanwhile, we observe that a coarse step mesh will result in a larger cut-off value,
without affecting the convergence rate.

\begin{figure}[h]
\centering
\includegraphics[trim = .1cm .1cm .1cm .1cm, clip=true,width=0.75\textwidth]{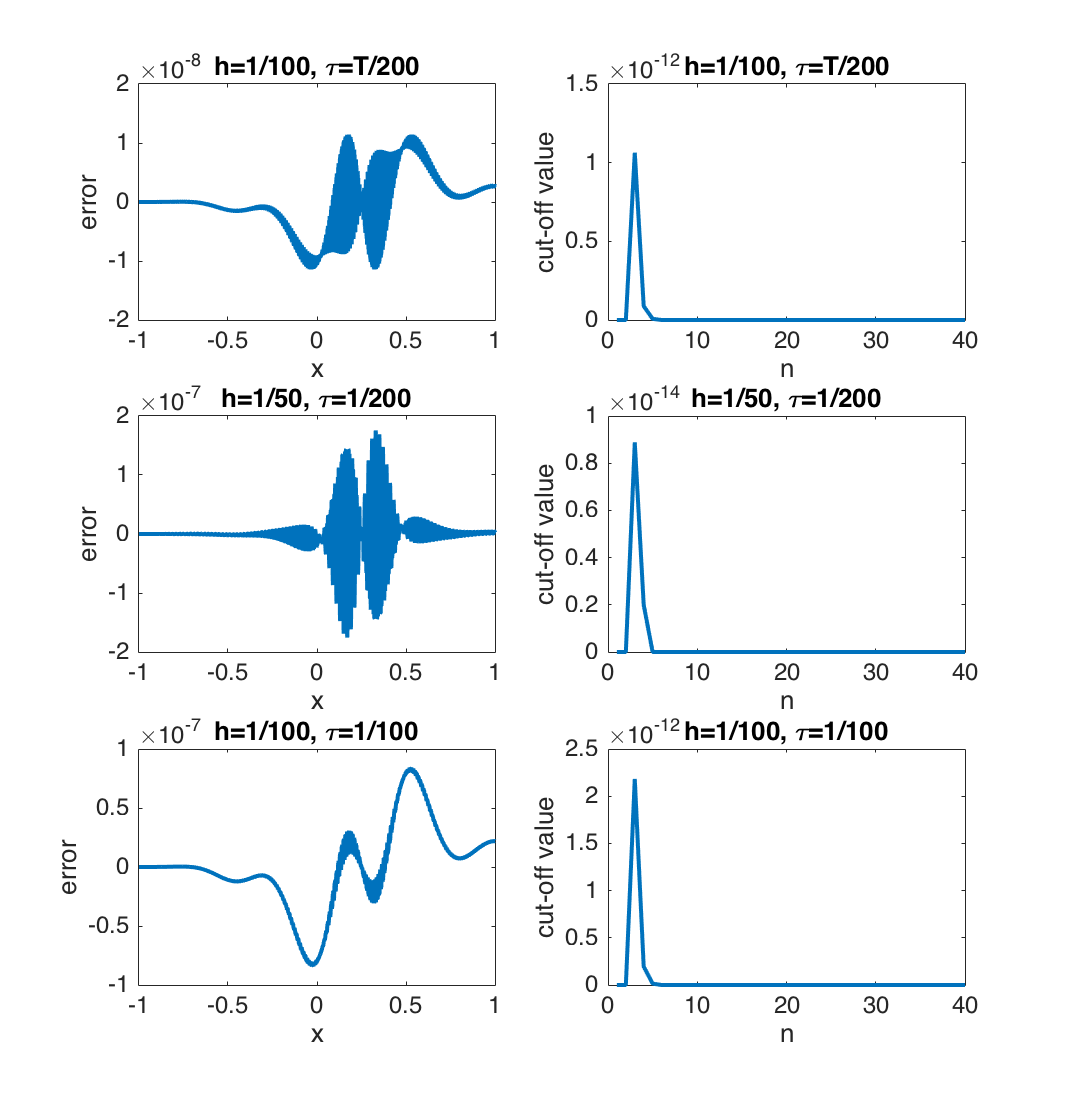}
\vspace{-15pt}
\caption{Error at $T=0.01$ and maximal cut-off value at each time level.
\label{fig:err_trun}}
\end{figure}

\begin{figure}[h]
  \centering
  \subfigure[$m = 1, \epsilon=0.1, T=2, \tau=1/150$]{\includegraphics[width=\textwidth]{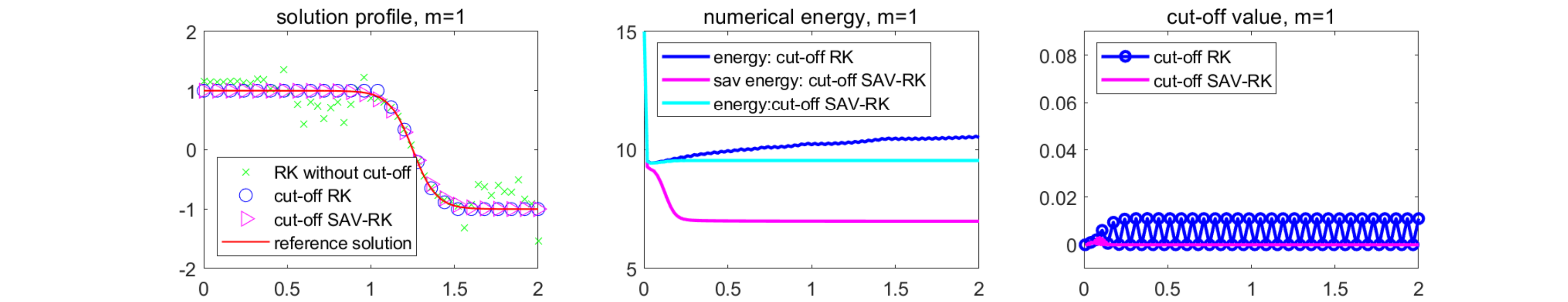}}
  \subfigure[$m = 2, \epsilon=0.1, T=2, \tau=1/250$]{\includegraphics[width=\textwidth]{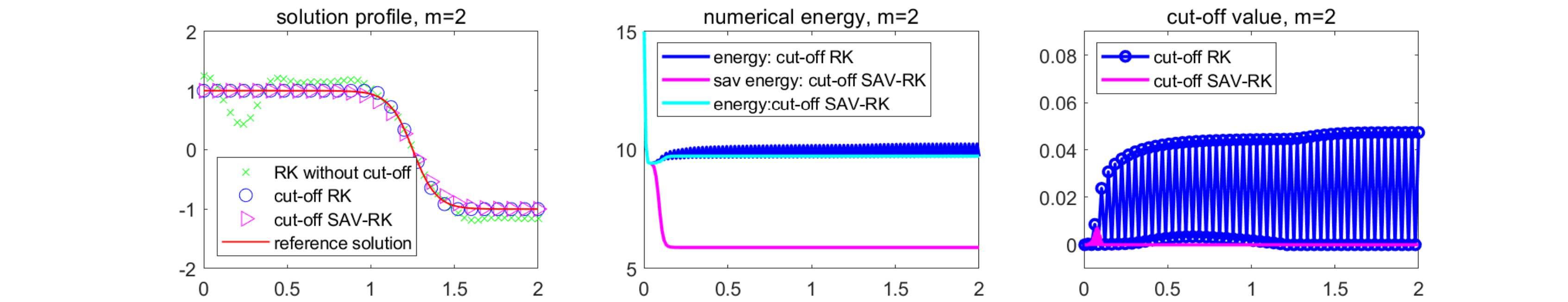}}
  \caption{Left: solution profiles of numerical solutions of RK,  cut-off RK and cut-off SAV-RK scheme.
  Middle: solution energy of cut-off RK and cut-off SAV-RK scheme. Right: cut-off values of cut-off RK and cut-off SAV-RK scheme.}
 \label{fig:energy}
\end{figure}

Finally, we test the numerical results in case of relatively large time steps, and compare the
numerical solutions of extrapolated RK, cut-off RK \eqref{eqn:RK-fully}-\eqref{eqn:RK-cut},
 and cut-off SAV-RK schemes \eqref{eqn:AC-sav-fully}-\eqref{eqn:truncation-fully-sav}, with $r=1$,
see Figure \ref{fig:energy}.
Without the cut-off postprocessing, the numerical solutions of RK scheme significantly
exceed the maximum bound,
and present oscillating solution profiles.
With the cut-off operation at each time step, the numerical solutions satisfy the maximum bound,
and present reasonable solution profiles. However,
numerical results show that the cut-off RK scheme
might produce a solution with a obviously increasing and oscillating
energy curve. This issue could be significantly improved by applying the cut-off SAV-RK method, whose solution
satisfy the maximum bound and the numerical energy is more stable.
Moreover, the numerical results show that the cut-off SAV-RK scheme will produce a more regular numerical solution
and smaller cut-off values, compared with the cut-off RK scheme.

\appendix

\section*{Appendix A}\label{sec:appendixA}
In this part, we sketch a proof for Lemma \ref{lem:RK-local-truncation}.
\begin{proof}
We note that the second relation in equation \eqref{eqn:RK-fully-trunc} implies
\begin{equation*}
 u(t_{ni}) -   u(t_{n-1}) -\tau\sum_{j=1}^m a_{ij}  u_t^{nj} =
 \tau \sum_{j=1}^m a_{ij} (\dot u_*^{nj} - u_t(t_{nj}))+
 \eta_{ni}\quad \text{for} ~~i=1,2,\dots,m.
\end{equation*}
Then we substitute the first relation of \eqref{eqn:RK-fully-trunc} and derive that
for $i=1,2,\dots,m$
\begin{equation*}
 u(t_{ni}) -   u(t_{n-1}) -\tau\sum_{j=1}^m a_{ij}  u_t^{nj} =
 \tau \sum_{j=1}^m a_{ij}   \Big(\sum_{\ell=1}^k L_{\ell}(t_{n-1}+c_j\tau) f(u(t_{n-\ell})) - f(t_{nj})\Big)+
 \eta_{ni}.
\end{equation*}
Define $\tilde\eta_{ni}$ as the left hand side of the above relation. 
Now we apply Taylor's expansion at $t_{n-1}$  and use \eqref{eqn:stage-order} to derive
\begin{equation*}
\begin{aligned}
\tilde\eta_{ni} =& \sum_{l=1}^{m}\frac{\tau^l}{(l-1)!}\l(\frac{c_i^l}l -
\sum_{j=1}^ma_{ij}c_j^{l-1}\r)u^{(\ell)}(t_n) +
\frac1{m!}\int_{t_{n-1}}^{t_{ni}} (t_{ni}-s)^{m} u^{(m+1)}(s) \d s \\
&+ \frac\tau{(m-1)!}\sum_{j=1}^ma_{ij}\int_{t_{n-1}}^{t_{nj}}
(t_{nj}-s)^{m-1} u^{(m+1)}(s) \d s\\
&=\frac1{m!}\int_{t_{n-1}}^{t_{ni}} (t_n-s)^{m} u^{(m+1)}(s) \d s
+ \frac\tau{(m-1)!}\sum_{j=1}^ma_{ij}\int_{t_{n-1}}^{t_{nj}}
(t_{nj}-s)^{m-1} u^{(m+1)}(s) \d s
\end{aligned}
\end{equation*}
Then we obtain the estimate for $\tilde\eta_{ni}$, with $i=1,2,\dots,m$, that
\begin{equation*}
\|\tilde\eta_{ni}\|_{H^1(\Omega)} \leq C\tau^{m+1} \|u^{(m+1)}\|_{C([t_{n-1},t_n];H^1(\Omega))}.
\end{equation*}
This together with the approximation property of Lagrange interpolation lead to
\begin{equation*}
    \|\eta_{ni}\|_{H^1(\Omega)} \leq C\Big(\tau^{k+1} \| f(u)  \|_{C^{k}([t_{n-k},t_n]; H^1\II)}
    + \tau^{m+1} \|u\|_{C^{(m+1)}([t_{n-1},t_n];H^1(\Omega))} \Big). 
\end{equation*}
for $i=1,2,\dots,m$. Similarly, we have
\begin{equation*}
    u(t_{n}) -  u(t_{n-1}) -\tau\sum_{i=1}^m b_i  u_t^{ni} =
    \tau \sum_{i=1}^m b_i  \Big(\sum_{\ell=1}^k L_{\ell}(t_{n-1}+c_i\tau) f(u(t_{n-\ell})) - f(t_{ni})\Big)  +
    \eta_{n}.
\end{equation*}
Take the left hand side as $\tilde{\eta_n}$. Then Taylor expansion and \eqref{eqn:accuracy} imply
\begin{equation*}
\begin{aligned}
\tilde\eta_{n} =\frac1{p!}\int_{t_{n-1}}^{t_n} (t_n-s)^p u^{(p+1)}(s) \d s
+ \frac\tau{(p-1)!}\sum_{i=1}^mb_i\int_{t_{n-1}}^{t_{ni}}
(t_{ni}-s)^{p-1} u^{(p+1)}(s) \d s.
\end{aligned}
\end{equation*}
This together with the approximation property of Lagrange interpolation leads to
\begin{equation*}
     \|\eta_{ni}\|_{H^1(\Omega)} \leq C\Big(\tau^{k+1} \| f(u)  \|_{C^{k}([t_{n-k},t_n]; H^1\II)}
    + \tau^{p+1} \|u \|_{C^{p+1}([t_{n-1},t_n];H^1(\Omega))} \Big).
\end{equation*}
Using the choice that $k=\min(p, m+1)$, we derive the desired result.
\end{proof}

\bibliographystyle{siam}

\begin{thebibliography}{10}

\bibitem{akrivis2019energy}
{\sc G.~Akrivis, B.~Li, and D.~Li}, {\em Energy-decaying extrapolated
  {RK}-{SAV} methods for the {A}llen-{C}ahn and {C}ahn-{H}illiard equations},
  SIAM J. Sci. Comput., 41 (2019), pp.~A3703--A3727.

\bibitem{AC}
{\sc S.~M. Allen and J.~W. Cahn}, {\em A microscopic theory for anti-phase
  boundary motion and its application to anti-phase domain coarsening}, Acta
  Metall, 27 (1979), pp.~1085--1095.

\bibitem{anderson1998diffuse}
{\sc D.~M. Anderson, G.~B. McFadden, and A.~A. Wheeler}, {\em Diffuse-interface
  methods in fluid mechanics}, Annual review of fluid mechanics, 30 (1998),
  pp.~139--165.

\bibitem{BrennerCrouzeixThomee:1982}
{\sc P.~Brenner, M.~Crouzeix, and V.~Thom\'{e}e}, {\em Single-step methods for
  inhomogeneous linear differential equations in {B}anach space}, RAIRO Anal.
  Num\'{e}r., 16 (1982), pp.~5--26.

\bibitem{chen2002phase}
{\sc L.-Q. Chen}, {\em Phase-field models for microstructure evolution}, Annual
  review of materials research, 32 (2002), pp.~113--140.

\bibitem{DuJuLiQiao:2019}
{\sc Q.~Du, L.~Ju, X.~Li, and Z.~Qiao}, {\em Maximum principle preserving
  exponential time differencing schemes for the nonlocal {A}llen-{C}ahn
  equation}, SIAM J. Numer. Anal., 57 (2019), pp.~875--898.

\bibitem{DuJuLiQiao:2020}
\leavevmode\vrule height 2pt depth -1.6pt width 23pt, {\em Maximum bound
  principles for a class of semilinear parabolic equations and exponential time
  differencing schemes}, arXiv preprint: 2005.11465, to appear in SIAM Review,
  (2020).

\bibitem{Ehle:thesis}
{\sc B.~L. Ehle}, {\em On {P}ad\'e approximations to the exponential function
  and {A}-stable methods for the numerical solution of initial value problems},
  ProQuest LLC, Ann Arbor, MI, 1969.
\newblock Thesis (Ph.D.)--University of Waterloo (Canada).

\bibitem{Gong:2020}
{\sc Y.~Gong, J.~Zhao, and Q.~Wang}, {\em Arbitrarily high-order
  unconditionally energy stable schemes for thermodynamically consistent
  gradient flow models}, SIAM J. Sci. Comput., 42 (2020), pp.~B135--B156.

\bibitem{gottlieb2011strong}
{\sc S.~Gottlieb, D.~I. Ketcheson, and C.-W. Shu}, {\em Strong stability
  preserving Runge-Kutta and multistep time discretizations}, World Scientific
  Press, 2011.

\bibitem{gottlieb1998total}
{\sc S.~Gottlieb and C.-W. Shu}, {\em Total variation diminishing runge-kutta
  schemes}, Mathematics of computation, 67 (1998), pp.~73--85.

\bibitem{GottliebShuTadmor:2001}
{\sc S.~Gottlieb, C.-W. Shu, and E.~Tadmor}, {\em Strong stability-preserving
  high-order time discretization methods}, SIAM Rev., 43 (2001), pp.~89--112.

\bibitem{HairerWanner:2010}
{\sc E.~Hairer and G.~Wanner}, {\em Solving ordinary differential equations.
  {II}}, vol.~14 of Springer Series in Computational Mathematics,
  Springer-Verlag, Berlin, 2010.
\newblock Stiff and differential-algebraic problems, Second revised edition,
  paperback.

\bibitem{IsherwoodGrantGottlieb:2018}
{\sc L.~Isherwood, Z.~J. Grant, and S.~Gottlieb}, {\em Strong stability
  preserving integrating factor {R}unge-{K}utta methods}, SIAM J. Numer. Anal.,
  56 (2018), pp.~3276--3307.

\bibitem{ju2020maximum}
{\sc L.~Ju, X.~Li, Z.~Qiao, and J.~Yang}, {\em Maximum bound principle
  preserving integrating factor runge-kutta methods for semilinear parabolic
  equations}, arXiv preprint arXiv:2010.12165,  (2020).

\bibitem{LiYangZhou:sisc}
{\sc B.~Li, J.~Yang, and Z.~Zhou}, {\em Arbitrarily high-order exponential
  cut-off methods for preserving maximum principle of parabolic equations},
  SIAM J. Sci. Comput., 42 (2020), pp.~A3957--A3978.

\bibitem{LiaoTangZhou:2020}
{\sc H.~L. Liao, T.~Tang, and T.~Zhou}, {\em On energy stable,
  maximum-principle preserving, second order bdf scheme with variable steps for
  the allen-cahn equation}, SIAM J. Numer. Math, 58 (2020), pp.~2294--2314.

\bibitem{Liu-Yu-2014}
{\sc H.~Liu and H.~Yu}, {\em Maximum-principle-satisfying third order
  discontinuous {G}alerkin schemes for {F}okker-{P}lanck equations}, SIAM J.
  Sci. Comput., 36 (2014), pp.~A2296--A2325.

\bibitem{Liu-Osher-1996}
{\sc X.-D. Liu and S.~Osher}, {\em Nonoscillatory high order accurate
  self-similar maximum principle satisfying shock capturing schemes i}, SIAM J.
  Numer. Anal., 33 (1996), pp.~760--779.

\bibitem{OstermannRoche:1992}
{\sc A.~Ostermann and M.~Roche}, {\em Runge-{K}utta methods for partial
  differential equations and fractional orders of convergence}, Math. Comp., 59
  (1992), pp.~403--420.

\bibitem{Qiu-Shu-2005}
{\sc J.~Qiu and C.-W. Shu}, {\em {Runge--Kutta discontinuous Galerkin method
  using WENO limiters}}, SIAM J. Sci. Comput., 26 (2005), pp.~907--929.

\bibitem{Quarteroni:2000}
{\sc A.~Quarteroni, R.~Sacco, and F.~Saleri}, {\em Numerical mathematics},
  vol.~37 of Texts in Applied Mathematics, Springer-Verlag, New York, 2000.

\bibitem{ShenTangYang:2016}
{\sc J.~Shen, T.~Tang, and J.~Yang}, {\em On the maximum principle preserving
  schemes for the generalized {A}llen-{C}ahn equation}, Commun. Math. Sci., 14
  (2016), pp.~1517--1534.

\bibitem{ShenXu:2018}
{\sc J.~Shen and J.~Xu}, {\em Convergence and error analysis for the scalar
  auxiliary variable ({SAV}) schemes to gradient flows}, SIAM J. Numer. Anal.,
  56 (2018), pp.~2895--2912.

\bibitem{shen2018scalar}
{\sc J.~Shen, J.~Xu, and J.~Yang}, {\em The scalar auxiliary variable (sav)
  approach for gradient flows}, Journal of Computational Physics, 353 (2018),
  pp.~407--416.

\bibitem{shen2019new}
{\sc J.~Shen, J.~Xu, and J.~Yang}, {\em A new class of efficient and robust
  energy stable schemes for gradient flows}, SIAM Review, 61 (2019),
  pp.~474--506.

\bibitem{tangyang1}
{\sc T.~Tang and J.~Yang}, {\em Implicit-explicit scheme for the {A}llen-{C}ahn
  equation preserves the maximum principle}, J. Comput. Math., 34 (2016),
  pp.~471--481.

\bibitem{Thomee:2006}
{\sc V.~Thom{\'e}e}, {\em Galerkin {F}inite {E}lement {M}ethods for {P}arabolic
  {P}roblems}, Springer-Verlag, Berlin, second~ed., 2006.

\bibitem{Vegt-Xia-Xu-2019}
{\sc J.~J.~W. van~der Vegt, Y.~Xia, and Y.~Xu}, {\em Positivity preserving
  limiters for time-implicit higher order accurate discontinuous {G}alerkin
  discretizations}, SIAM J. Sci. Comput., 41 (2019), pp.~A2037--A2063.

\bibitem{Xu-2014}
{\sc Z.~Xu}, {\em Parametrized maximum principle preserving flux limiters for
  high order schemes solving hyperbolic conservation laws: one-dimensional
  scalar problem}, Math. Comp., 83 (2014), pp.~2213--2238.

\bibitem{yue2004diffuse}
{\sc P.~Yue, J.~J. Feng, C.~Liu, and J.~Shen}, {\em A diffuse-interface method
  for simulating two-phase flows of complex fluids}, Journal of Fluid
  Mechanics, 515 (2004), p.~293.

\bibitem{Zhang-Shu-2010}
{\sc X.~Zhang and C.-W. Shu}, {\em On maximum-principle-satisfying high order
  schemes for scalar conservation laws}, J. Comput. Phys., 229 (2010),
  pp.~3091--3120.

\end{thebibliography}

\end{document}